\definecolor{dukeblue}{rgb}{0.0, 0.0, 0.61}
\definecolor{darkcandyapplered}{rgb}{0.64, 0.0, 0.0}
\newcommand{\one}{\mathbf{1}}
\newcommand{\diff}{\, \mathrm{d}}
\newcommand{\del}{\partial}
\newcommand{\N}{\mathbb{N}}
\newcommand{\R}{\mathbb{R}}
\def\ad{{\mathrm{ad}}}
\def\aux{\mathrm{aux}}
\def\Lip{\mathrm{Lip}}
\def\*#1{\mathbf{#1}}
\theoremstyle{plain}
\numberwithin{equation}{section}
\newtheorem{theorem}{Theorem}[section]
\newtheorem{lemma}{Lemma}[section]
\newtheorem{claim}{Claim}[section]
\newtheorem{cor}{Corollary}[section]
\newtheorem{assumption}{Assumption}
\newtheorem{remark}{Remark}
\theoremstyle{definition}
	\title 
	[Turnpike in Lipschitz--nonlinear optimal control]{Turnpike in Lipschitz--nonlinear optimal control}
	\author{Carlos Esteve-Yag\"ue}
	\author{Borjan Geshkovski}
	\author{Dario Pighin}
	\address {\textbf{\textup{Carlos Esteve-Yag\"ue, Borjan Geshkovski, Dario Pighin}}
	\newline \indent
	\textup{Chair of Computational Mathematics} \hspace{1.28cm}
	\newline \indent
	\textup{Fundaci\'on Deusto}
	\newline \indent
	\textup{Av. de las Universidades, 24}
	\newline \indent
	\textup{48007 Bilbao, Basque Country, Spain} 
	}
	\email{\href{mailto:borjan.geshkovski@uam.es}{\textcolor{dukeblue}{\texttt{\{carlos.esteve, borjan.geshkovski, dario.pighin\}@deusto.es}} }}
	\author{Enrique Zuazua}
	\address{\textbf{\textup{Enrique Zuazua}}
		\newline \indent
		\textup{Chair in Dynamics, Control, and Numerics}
		\newline \indent
		\textup{Alexander von Humboldt-Professorship}
		\newline \indent
		\textup{Friedrich-Alexander-Universit\"at Erlangen-N\"urnberg}
		\newline \indent
		\textup{91058 Erlangen, Germany}
		\newline \indent \hspace{2.5cm} \textit{and} \newline \indent
	\textup{Chair of Computational Mathematics} 
	\newline \indent
	\textup{Fundaci\'on Deusto}
	\newline \indent
	\textup{Av. de las Universidades, 24}
	\newline \indent
	\textup{48007 Bilbao, Basque Country, Spain}
	\newline \indent \hspace{2.5cm} \textit{and} \newline \indent
	\textup{Departamento de Matemáticas} \newline \indent
		\textup{Universidad Autónoma de Madrid}
	\newline \indent
	\textup{28049 Madrid, Spain}
	}
	\email{\href{mailto:enrique.zuazua@fau.de}{\textcolor{dukeblue}{\texttt{enrique.zuazua@fau.de}}}}
\date{\today}
\begin{document}
	
		\begin{abstract}
			We present a new proof of the turnpike property for nonlinear optimal control problems, when the running target is a steady control-state pair of the underlying system.
			Our strategy combines the construction of quasi-turnpike controls via controllability, and a bootstrap argument, and does not rely on analyzing the optimality system or linearization techniques.
			This in turn allows us to address several optimal control problems for finite-dimensional, control-affine systems with globally Lipschitz (possibly nonsmooth) nonlinearities, without any smallness conditions on the initial data or the running target. 
			These results are motivated by applications in machine learning through deep residual neural networks, which may be fit within our setting.
			We show that our methodology is applicable to controlled PDEs as well, such as the semilinear wave and heat equation with a globally Lipschitz  nonlinearity, once again without any smallness assumptions. 
		\end{abstract}
		
	\maketitle	
	\setcounter{tocdepth}{1}
	
	\tableofcontents
	
	{\small{{\bf Keywords.} Optimal control; Turnpike property; Neural ODEs; ResNets; Deep learning; \indent Wave equation; Heat equation.}}

{\small{\href{https://mathscinet.ams.org/msc/msc2010.html}{{\bf \color{dukeblue}{AMS Subject Classification}}}}. 34H05; 34H15; 93C15; 93C20.}
		
\section{Introduction}

	\subsection{Motivation}
	The \emph{turnpike property} reflects the fact that, for suitable optimal control problems set in a sufficiently large time horizon, any optimal solution thereof remains, during most of the time, close to the optimal solution of a corresponding “static” optimal control problem. 
	This optimal static solution is referred to as \emph{the turnpike} -- the name stems from the idea that a turnpike is the fastest route between two points which are far apart, even if it is not the most direct route.
	In many cases, the turnpike property is described by an exponential estimate -- for instance, the optimal trajectory $y_T(t)$ is $\mathcal{O}\left(e^{-\mu t}+e^{-\mu(T-t)}\right)$--close to the optimal static solution $\overline{y}$, for $t \in [0, T]$ and for some $\mu>0$. 
	\smallskip
	
	\subsubsection{Background} \label{sec: background}
	The prevalent (but not exclusive) argument for proving exponential turnpike results relies on a thorough analysis of the optimality system provided by the Pontryagin Maximum Principle. 
	In the context of linear quadratic optimal control problems, under appropriate controllability or stabilizability conditions, turnpike is established via properties of the optimality system characterizing the optimal controls and states through the coupling with the adjoint system (\cite{porretta2013long, grune2020exponential}).
	In the case of nonlinear dynamics, this argument thus requires nonlinearities which are continuously differentiable. A linearization argument is used -- the linear study and a fixed point argument provide nonlinear results under smallness assumptions on the initial data and the target (\cite{PZ2, trelat2015turnpike}). The smallness conditions on the initial data can be removed in some specific cases (\cite{pighin2020semilinear}), but to the best of our knowledge, the assumptions on the running target have not been as of yet (albeit, they may be removed under restrictive assumptions, such as strict dissipativity, uniqueness of minimizers and $C^2$--regular nonlinearities -- see \cite{trelat2020linear}). This is due to the lack of tools for showing that the linearized optimality system corresponds to a linear-quadratic control problem satisfying the turnpike property, when the running target of the original nonlinear control problem is large.
	
	\subsubsection{A question raised by machine learning}
	There has been an ever-increasing need, brought by applications in machine learning via residual neural networks (ResNets, \cite{weinan2017proposal, esteve2020large, he2016deep}), of turnpike results for nonlinear optimal control problems without smallness conditions on the initial data or the running target, and for systems with globally Lipschitz-continuous but possibly nonsmooth nonlinearities. 
	
	In (supervised) machine learning, one looks for a map which interpolates a dataset $$\left\{x^{(i)}, y^{(i)}\right\}_{i\in\{1,\ldots,n\}}\subset \mathbb{R}^{d_x}\times\mathbb{R}^{d_y},$$ and which gives accurate predictions on unknown points $x\in\R^{d_x}$ (\cite{lecun2015deep}). 
	Such a task may (in many cases) be accomplished by solving
	\begin{equation} \label{eq: J_T.deep}
	\inf_{\substack{u=(w,b)\in L^2(0,T;\mathbb{R}^{d_u})\\ \*x_i \text{ solves} \eqref{eq: neural.net}}}\sum_{i=1}^n\int_0^T\left\|P\*x_i(t)-y^{(i)}\right\|^2\diff t + \int_0^T \left\|u(t)\right\|^2\diff t,
	\end{equation}
	where $P:\R^{d_x}\to\R^{d_y}$ is a given surjective map (possibly nonlinear, see \Cref{sec: numerics}), and the constraint is given by the continuous-time residual neural network\footnote{Also referred to as a neural ODE \cite{chen2018neural}.}
	\begin{equation} \label{eq: neural.net}
	\begin{dcases}
	\dot{\*x}_i(t) = \sigma(w(t) \*x_i(t) + b(t)) &\text{ in } (0, T)\\
	\*x_i(0) = x^{(i)},
	\end{dcases}
	\end{equation}
	with $w(t)\in\R^{d_x\times d_x}$ and $b(t)\in\R^{d_x}$ designate the controls (thus $d_u=d_x^2+d_x$), whereas $\sigma \in \Lip(\R)$ with $\sigma(0)=0$ is a scalar nonlinear function, defined componentwise in \eqref{eq: neural.net}. 
	The most frequently used nonlinearities in practice are \emph{rectifiers}: $\sigma(x) = \max\{\alpha x, x\}$ for $\alpha \in [0,1)$, and \emph{sigmoids}: $\sigma(x) = \tanh(x)$.
	The order of the nonlinearity $\sigma$ and the affine map within may be permuted to obtain a driftless control-affine system
	\begin{equation} \label{eq: neural.net.2}
	\begin{dcases}
	\dot{\*x}_i(t) = w(t)\sigma(\*x_i(t))+ b(t) &\text{ in } (0, T)\\
	\*x_i(0) = x^{(i)}.
	\end{dcases}
	\end{equation}
	
	\begin{figure}[h] 
	\includegraphics[scale=0.475]{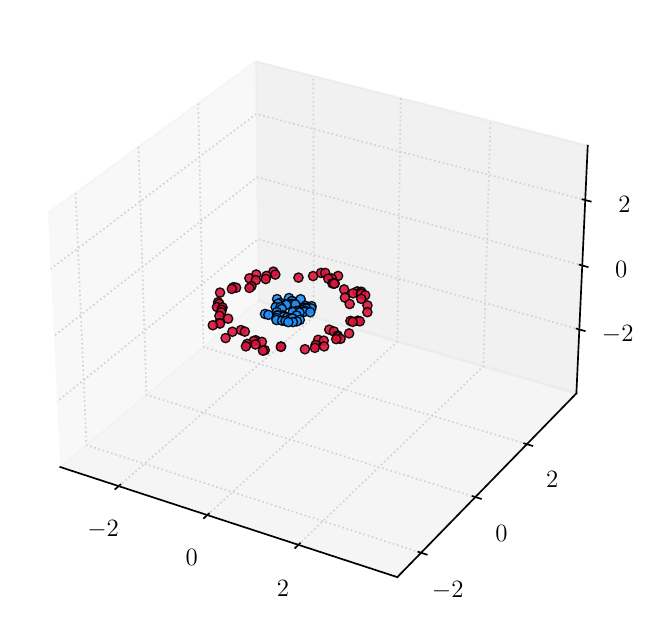}
	\includegraphics[scale=0.475]{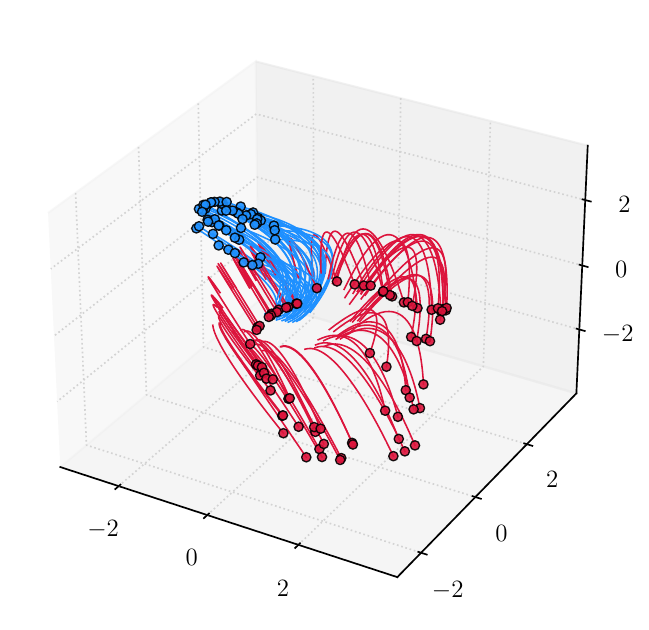}
	\includegraphics[scale=0.475]{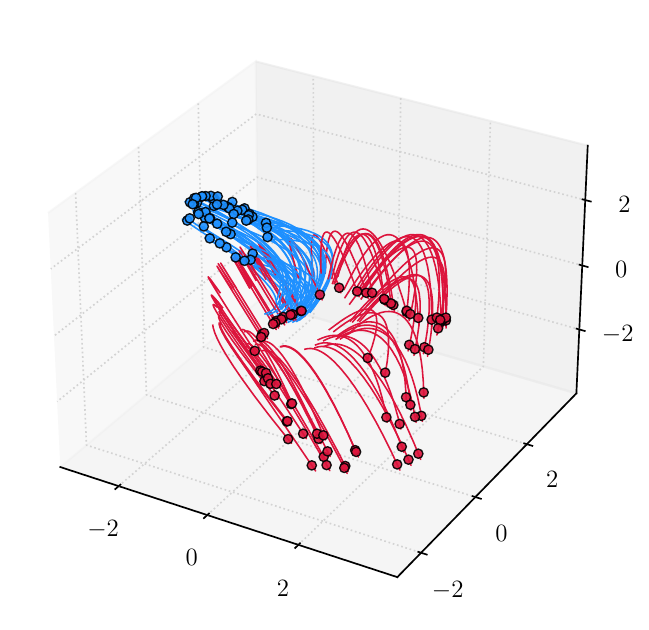}
	\includegraphics[scale=0.475]{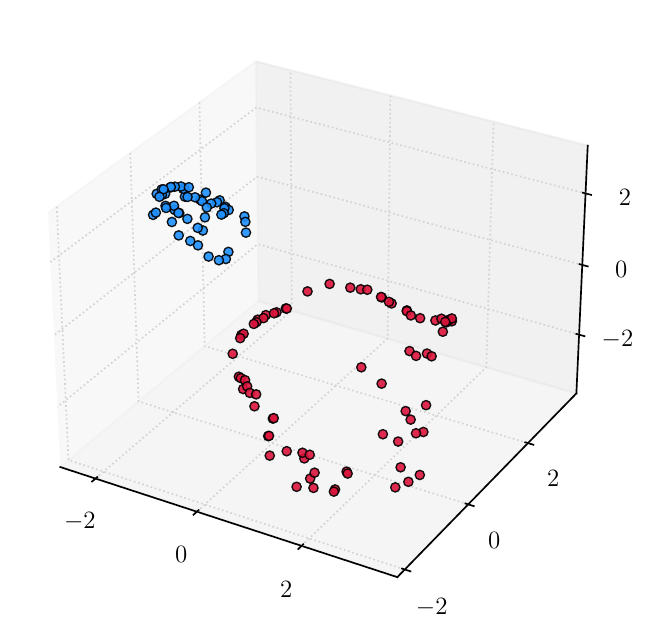}
	\caption{\emph{Binary classification task}. 
	One aims to separate the data $\left\{x^{(i)}\right\}_{i\in\{1,\ldots,n\}}\subset\R^3$ (top left) with respect to their color ($y^{(i)}=\pm1$ for red, blue) by using the flow of \eqref{eq: neural.net.2}, found by minimizing \eqref{eq: J_T.deep}.
	We plot the trajectories $\*x(t):=\{\*x_i(t)\}_{i\in\{1,\ldots,n\}}$ of \eqref{eq: neural.net.2} for $t\leqslant2$ (top right), $t\leqslant5$ (mid left), and in time $T=5$ (mid right). 
	We see stabilization for the projections, for the controls to $0$, and hence also for the trajectories to some points $\overline{\*x}_i \in P^{-1}(\{y^{(i)}\})$, which are steady states (bottom).
	}
	\vspace{0.25cm}
	\label{fig: figure.dl}
	\includegraphics[scale=0.465]{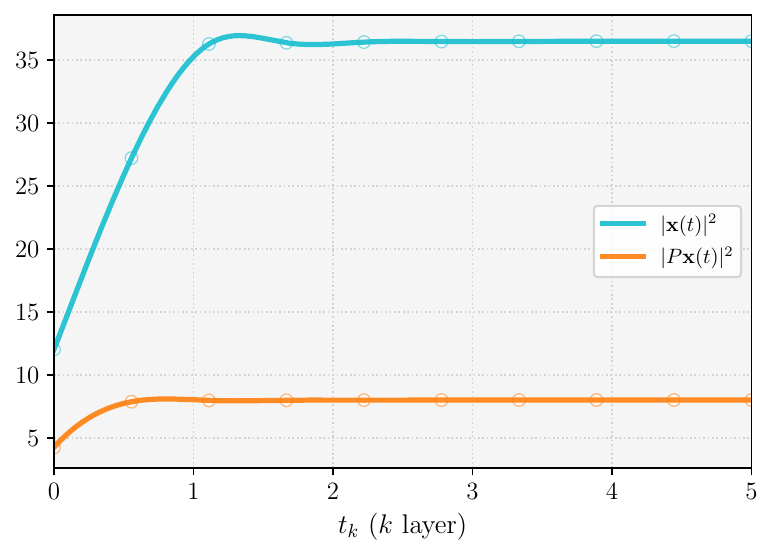}
	\includegraphics[scale=0.465]{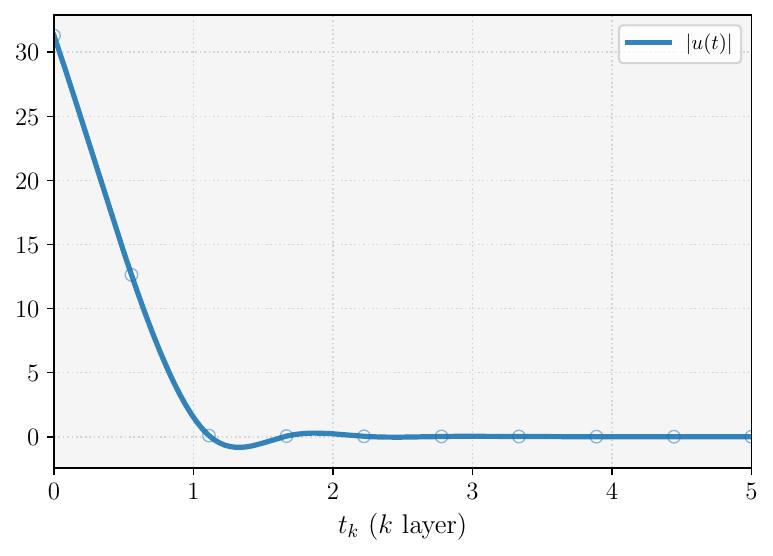}
	\end{figure}
	
	\noindent
	Combinations and variants of \eqref{eq: neural.net} and \eqref{eq: neural.net.2} may also be used (\cite{lin2018resnet}).
	Optimizing $u$ over $n\gg1$ different initial data establishes robustness, so that the neural networks \eqref{eq: neural.net} and \eqref{eq: neural.net.2}  may correctly perform future predictions on unknown points (\Cref{fig: train.test}).
	
	One notes a feature specific to the dynamics $f(\*x_i, u)$ in \eqref{eq: neural.net} and \eqref{eq: neural.net.2} for fixed $i$: any constant vector in $\mathbb{R}^{d_x}$ is a steady state with control $u=(w,b)\equiv0$. 
	Whence, the solutions to the optimal steady problem
	\begin{equation*}
	\inf_{\substack{(u,\*x_i)\in\mathbb{R}^{d_u}\times\mathbb{R}^{d_x}\\ f(\*x_i, u)=0}} \sum_{i=1}^n\left\|P\*x_i-y^{(i)}\right\|^2+\|u\|^2
	\end{equation*}
	with $f(\*x_i,u)$ as in \eqref{eq: neural.net} or \eqref{eq: neural.net.2}, are precisely given by $(0,\overline{\*x}_i)$, where $\overline{\*x}_i\in P^{-1}\left(\left\{y^{(i)}\right\}\right)$ for $i\in\{1,\ldots,n\}$ (the preimage of $P$ might not be a singleton if $d_x\neq d_y$).
	 In \Cref{fig: figure.dl}, we see that not only the projections $P\*x_i(t)$, but also the optimal trajectories $\*x_i(t)$ stabilize to such points: $\overline{\*x}_i \in P^{-1}\left(\left\{y^{(i)}\right\}\right)$, for $i\in\{1,\ldots,n\}$, which are, as said above, steady states of the underlying system without control (i.e. with $0$ control).
	Existing turnpike results do not immediately apply to explain this artifact, as discussed in \Cref{sec: background}, due to the use of nonsmooth nonlinearities and the lack of smallness assumptions on the targets, which would be unrealistic. 
	This motivates the setting of our study (even for more general dynamics), namely, the consideration running targets which are steady states of the underlying dynamics without control (see \eqref{eq: J_T.first} below). 
	
	\subsubsection{Interpretation}
	The practical interest of this stabilization property is regarding the approximation capacity of ResNets, which are the forward Euler discretization of \eqref{eq: neural.net} and \eqref{eq: neural.net.2} with fixed time-step $\bigtriangleup t = \sfrac{T}{n_t}$. Here $n_t$ is the \emph{number of layers}, and when $n_t\gg1$, one is said to be doing \emph{deep learning}. 
	As $\bigtriangleup t$ is fixed, exponential decay would provide a quantitative estimate of the number of layers needed to fit the data, whilst keeping the controls small (thus possibly ensuring generalization). 
	Such an estimate would actually indicate that the time horizon (or number of layers) ought not be big at all so that the approximation error (the first term in \eqref{eq: J_T.deep}) reaches $0$ with controls of small amplitude. In other words, any layers beyond a certain stopping time $T^*$ can be dropped\footnote{It should to be said that a sharp and applicable conclusion would depend on the "complexity" of the dataset, as well as the number $n$ of datapoints, which we do not specifically take into account. But an exponential decay estimate would give a rough idea on how to design methods for numerically estimating the stopping time $T^*$. We refer the reader to \cite{esteve2020large, faulwasser2021turnpike, geshkovski2021control, grune2019sensitivity, grune2020efficient} for further detail. Similar conclusions have been drawn in the context of $L^1(0,T;\mathbb{R}^{d_u})$ control penalties in \cite{yague2021sparse}.} from the optimization scheme. 
	 In our numerical experiment (see \Cref{sec: numerics} for detail and further illustrations), we use $T=5$ and $\bigtriangleup t = \sfrac12$, and stability occurs beyond $T^*\sim 2$.
	
	\subsection{Our contributions.} To answer this need, and motivated by problems as those above, in this work we provide a different perspective on the turnpike property in the context of nonlinear dynamics, and we bring forth the following contributions.
	
	\begin{itemize}
	\item In \Cref{sec: ode}, we consider optimal control problems consisting of minimizing\footnote{While not precisely the same as considering \eqref{eq: J_T.deep} for $d_x\neq d_y$ and a non-invertible map $P$, we believe that this setting is a first step towards a complete understanding of \eqref{eq: J_T.deep}. See \Cref{sec: conclusion} for a discussion.}
	\begin{equation} \label{eq: J_T.first}
	J_T(u) := \phi(y(T))+ \int_0^T \|y(t)-\overline{y}\|^2 \diff t + \int_0^T \|u(t)-\overline{u}\|^2 \diff t
	\end{equation}
	subject to $\dot{y} = f(y, u)$, where $f$ is of control-affine form. 
	Under the assumption that the running target $(\overline{u}, \overline{y})$ is a steady control-state pair, namely $f(\overline{y}, \overline{u}) = 0$, and that the system is controllable with an estimate on the cost (see \Cref{def: ctrl}), in  \Cref{thm: turnpike} we prove the exponential turnpike property described above. 
	The main novelty lies in the fact that the nonlinearity $f$ is only assumed to be globally Lipschitz continuous, and the result comes without any smallness conditions on the initial data or the specific running target. 
	In this case, existing results such as those presented in \cite{trelat2015turnpike} do not apply, as they require smallness assumptions and $C^2$--nonlinearities. 
	
	Moreover, whenever the functional to be minimized does not contain a final-time cost (such as $\phi(y(T))$ in $J_T$ above), we can prove (see \Cref{cor: stabilisation} below) that the exponential arc near the final time $t=T$ disappears, thus entailing an exponential stabilization property for the optimal state to the running target. 
	\smallskip
	
	\item In \Cref{sec: pde}, the finite-dimensional results are extended to analogue optimal control problems for underlying PDE dynamics. 
	This is illustrated in \Cref{thm: turnpike.wave}, \Cref{cor: stabilisation.wave} and \Cref{thm: turnpike.heat} in the context of the semilinear wave and heat equation with globally Lipschitz--only nonlinearity, once again under the assumption that the running target is a steady control-state pair. 
	We make no smallness assumptions neither on it, nor on the initial data, thus covering some cases where results from \cite{grune2021abstract, pighin2020semilinear, PZ2, zuazua2017large} are not applicable.
	\end{itemize}
	
	\subsection{Outline} The paper is organized as follows. \textbf{\Cref{sec: ode}} contains statements of our main results in the setting of finite-dimensional, control-affine systems (namely \Cref{thm: turnpike}, and \Cref{cor: stabilisation}, \Cref{cor: control.decay}). We also provide a sketch of our new, purely nonlinear strategy in \Cref{sec: sketch}.
	 \textbf{\Cref{sec: pde}} states the extensions of the finite-dimensional results to the setting of the semilinear wave and heat equation (\Cref{thm: turnpike.wave} and \Cref{thm: turnpike.heat} respectively). \textbf{\Cref{sec: preliminary.results}} provides some preliminary, but key lemmas, mainly to ensure appropriate $L^\infty_t$ bounds for the discrepancy between a trajectory $y(t)$ and the target steady state $\overline{y}$.  \textbf{\Cref{sec: proof.fin.dim}} provides our proofs of the main results in the finite-dimensional case, namely \Cref{thm: turnpike} and \Cref{cor: control.decay}.  \textbf{\Cref{sec: turnpike.wave.proof}} provides our proof of \Cref{thm: turnpike.wave}, namely the main result for the semilinear wave equation, which is a straightforward adaptation of the arguments in the finite-dimensional case. \textbf{\Cref{sec: turnpike.heat.proof}} presents our proof of \Cref{thm: turnpike.heat}, namely the main result for the semilinear heat equation. The same proof applies to \Cref{cor: stabilisation}. \textbf{\Cref{sec: conclusion}} concludes the paper with a selection of open problems.
	
	\subsection{Notation.} 
	We denote by $\| \cdot \|$ the standard euclidean norm, and $\N:=\{1,2,\ldots\}$.
	We denote by $\Lip(\R)$ the set of functions $f: \R \to \R$ which are globally Lipschitz continuous.

	\section{Finite-dimensional systems} \label{sec: ode}
	
	\subsection{Setup}
	Let $d\geqslant 1$ and $m\geqslant 1$. 
	We will consider differential control systems where the state $y(t)$ lives in $\R^d$ and the control input $u(t)$ in $\R^m$. 
	Given $T>0$, we focus on \emph{control-affine} systems, namely canonical nonlinear systems
	\begin{equation} \label{eq: nonlinear.control.system}
	\dot{y} = f(y,u) \hspace{1cm} \text{ in } (0,T)
	\end{equation}
	with a nonlinearity $f$ of the form
	\begin{equation} \label{eq: nonlinearity.control.affine}
	f(y,u)= f_0(y) + \sum_{j=1}^m u_j f_j(y) \hspace{1cm} \text{ for } (y,u) \in \R^d \times \R^m,
	\end{equation}
	where the vector fields $f_0,\ldots, f_m \in \Lip(\R^d; \R^d)$ are only assumed to be globally Lipschitz continuous. This formulation includes \eqref{eq: neural.net.2} -- see \Cref{rem: on.the.nonlinearity} for possible extensions to \eqref{eq: neural.net}. 
	For any given initial datum $y^0 \in \R^d$ and control input $u \in L^1(0,T;\R^m)$, system \eqref{eq: nonlinear.control.system}, with $f$ as in \eqref{eq: nonlinearity.control.affine}, admits a unique solution $y \in C^0([0,T];\R^d)$ with $y(0) = y^0$. This can be shown by means of a fixed point theorem and the Gr\"onwall inequality applied to the integral formulation
	\begin{equation*}
	y(t) = y^0 + \int_0^t f(y(s), u(s)) \diff s.
	\end{equation*}
	Given $y^0 \in \R^d$, we will investigate the behavior when $T\gg1$ of global minimizers $u_T \in L^2(0,T;\R^m)$ to nonnegative functionals of the form 
	\begin{equation} \label{eq: J_T}
	J_T(u):= \phi(y(T))+ \int_0^T \|y(t)-\overline{y}\|^2 \diff t + \int_0^T \|u(t)\|^2 \diff t,
	\end{equation}
	and of the corresponding solutions $y_T$ to \eqref{eq: nonlinear.control.system} with $y_T(0)=y^0$. 
	Here, $\phi \in C^0(\R^d; \R_+)$ is a given final cost, while $\overline{y} \in \R^d$ is a given running target which we select as an \emph{uncontrolled steady state} of the nonlinear dynamics, namely	
	\begin{equation} \label{eq: f0}
	f_0(\overline{y}) = 0.
	\end{equation} 
	We provide further comments on the specific choice of the running target just below, in \Cref{rem: running.target}. 
	Due to the coercivity of $J_T$ and the explicit form of $f$ in \eqref{eq: nonlinearity.control.affine}, the existence of a minimizer of $J_T$ follows from the direct method in the calculus of variations.
	
	Due to the presence of the state tracking term in the definition of $J_T$, which regulates the state over the entire time interval $[0,T]$, the turnpike property is expected to hold: over long time horizons, the optimal control-state pair $(u_T, y_T)$ should be "near" the optimal steady control-state pair $(u_s, y_s)$, namely a solution to the problem
	\begin{equation} \label{eq: steady.state.ocp}
	\inf_{\substack{(y,u)\in\mathbb{R}^d\times\R^m\\ \text{ subject to } f(y,u)=0}} \|y-\overline{y}\|^2 + \|u\|^2.
	\end{equation}
	Now note that, due to the assumption \eqref{eq: f0} on the running target $\overline{y}$, and the form of the nonlinearity $f$ in \eqref{eq: nonlinearity.control.affine}, 
	it can be seen that $(u_s, y_s) \equiv (0, \overline{y})$ designates the unique optimal stationary solution, namely the unique solution to \eqref{eq: steady.state.ocp}.	

	\begin{remark}[Controlled steady states] \label{rem: running.target}

	The choice of the running target $\overline{y}$ in \eqref{eq: f0} is tailored to our proof strategy and the choice of the functional $J_T$ in \eqref{eq: J_T}. 
	The key feature our methodology requires is that the Lagrangian $\mathscr{L}(u, y)=\|y-\overline{y}\|^2 + \|u-\overline{u}\|^2$ equals zero when evaluated at the optimal steady state. 
	In fact, we could more generally consider the functional
	\begin{equation*}
	J_T(u) := \phi(y(T)) +  \int_0^T \|y(t)-\overline{y}\|^2 \diff t + \int_0^T \|u(t)-\overline{u}\|^2\diff t
	\end{equation*}
	where $(\overline{u}, \overline{y}) \in \R^m\times\R^d$ is chosen so that $f(\overline{y}, \overline{u}) = 0$ (with $f$ as in \eqref{eq: nonlinearity.control.affine}), as discussed in the introduction. The results presented below could then readily be adapted to this case (by additionally changing \eqref{eq: turnpike.1} and \Cref{def: ctrl} to an $L^2$--bound of $u_T-\overline{u}$).
	We have taken $\overline{u}=0$ for presentational simplicity.

	\end{remark}
	
	\noindent
	In the context of nonlinear optimal control, such turnpike results have been shown in \cite{trelat2015turnpike} (see also \cite{trelat2020linear}) for $C^2$--regular nonlinearities $f$. 
	This order of regularity is required due to the proof strategy, which relies on linearizing the optimality system given by the Pontryagin Maximum Principle. 
	As a consequence, the results in \cite{trelat2015turnpike} are also local, in the sense that smallness conditions are assumed on the initial data and target in view of applying a fixed point argument.
	In this work, we take a further step and obtain global results for globally Lipschitz nonlinearities.
	
	\subsection{Main results}
	Controllability plays a key role in the context of turnpike.
	Hence, before proceeding, we state the following assumption.
	
	\begin{assumption}[Controllability \& cost estimate] \label{def: ctrl}
	We will assume that \eqref{eq: nonlinear.control.system} is \emph{controllable} in some time $T_0>0$, meaning that there exists some time $T_0>0$ such that for any $y^0, y^1\in \R^d$, there exists a control $u \in L^2(0,T_0; \R^m)$ such that the corresponding solution $y \in C^0([0,T_0];\R^d)$ to \eqref{eq: nonlinear.control.system} with $y(0) = y^0$ satisfies $y(T_0) = y^1$.
	
	We will moreover assume that there exists a radius $r>0$ and a constant $C(T_0)>0$ such that 
	\begin{equation} \label{eq: control.cost.estimate.1}
	\inf_{\substack{u \\ \text{ such that } \\ y(0) = y^0, \, y(T_0) = \overline{y}} } \|u\|_{L^2(0,T_0; \R^m)}\leqslant C(T_0)\left\|y^0-\overline{y}\right\|,
	\end{equation}
	and
	\begin{equation} \label{eq: control.cost.estimate.2}
	\inf_{\substack{u \\ \text{ such that } \\ y(0) = \overline{y}, \, y(T_0) = y^1} } \|u\|_{L^2(0,T_0; \R^m)}\leqslant C(T_0)\left\|y^1-\overline{y}\right\|,
	\end{equation}
	hold for any $y^0, y^1 \in \left\{x\in \R^d \colon \|x-\overline{y}\|\leqslant r\right\}$, where $\overline{y} \in \R^d$ is fixed as in \eqref{eq: f0}.
	\end{assumption}
	
	\noindent
	We discuss the feasibility of this assumption later on, in \Cref{rem: later.on}.
	Note that this is not a smallness assumption -- it merely stipulates that, inside some ball centered at $\overline{y}$, the cost of controlling from $y^0$ to $\overline{y}$ and from $\overline{y}$ to $y^1$ can be estimated by means of the distance of $y^0$ and $y^1$ to $\overline{y}$.
	We may now state our first main result.
		
	\begin{theorem}[Turnpike] \label{thm: turnpike}
	Assume that $f_0, \ldots, f_m \in \Lip(\R^d; \R^d)$ in \eqref{eq: nonlinearity.control.affine}, and assume that \eqref{eq: nonlinear.control.system} is controllable in some time $T_0>0$ in the sense of \Cref{def: ctrl}.
	Let $y^0\in\R^d$ be given, and let $\overline{y}\in\R^d$ be as in \eqref{eq: f0}.
	Then there exists a time $T^*>0$, and constants $C>0$ and $\mu>0$, all depending on $T_0,y^0,\overline{y}$, such that for any $T>T^*$, any global minimizer $u_T\in L^2(0,T;\R^m)$ to $J_T$ defined in \eqref{eq: J_T} and corresponding optimal state $y_T$ solution to \eqref{eq: nonlinear.control.system} with $y_T(0)=y^0$ satisfy
	\begin{equation} \label{eq: turnpike.2}
	\left\|y_T(t)-\overline{y}\right\| \leqslant C\left( e^{-\mu t} + e^{-\mu(T-t)} \right)
	\end{equation}
	for all $t \in [0, T]$, and
	\begin{equation} \label{eq: turnpike.1}
	\| u_T\|_{L^2(0,T; \R^m)} \leqslant C.
	\end{equation}
	\end{theorem}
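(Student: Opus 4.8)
The plan is to avoid the optimality system entirely and work directly with the value function by an energy/comparison argument combined with a bootstrap. The skeleton has three stages: (a) produce a cheap admissible competitor that reaches $\overline y$ quickly, stays there, and possibly leaves near $t=T$, to get an a priori bound $J_T(u_T)\le C$ independent of $T$; (b) upgrade this integral bound to an $L^2$ bound on $u_T$ (which is essentially immediate from (a), since $\|u_T\|_{L^2(0,T)}^2\le J_T(u_T)\le C$, giving \eqref{eq: turnpike.1}); (c) run a dissipativity/telescoping argument on subintervals of length $T_0$ to convert the finite total running cost $\int_0^T\|y_T-\overline y\|^2\,dt\le C$ into the exponential pointwise decay \eqref{eq: turnpike.2}.

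For stage (a): fix the controllability time $T_0$ from \Cref{def: ctrl}. When $\|y^0-\overline y\|\le r$, estimate \eqref{eq: control.cost.estimate.1} gives a control on $[0,T_0]$ steering $y^0$ to $\overline y$ with $L^2$-cost $\le C(T_0)\|y^0-\overline y\|$; on $[T_0,T]$ take $u\equiv 0$, so $y\equiv\overline y$ there (using $f_0(\overline y)=0$ and uniqueness). This competitor has running cost on $[0,T_0]$ controlled by Grönwall applied to \eqref{eq: nonlinear.control.system} (the globally Lipschitz vector fields $f_j$ give $\|y(t)-\overline y\|\le e^{Lt}(\|y^0-\overline y\|+\|u\|_{L^1})$ type bounds, then bootstrapped to $L^2$), plus the final cost $\phi(\overline y)$, which is a fixed constant; everything else is zero. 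If $\|y^0-\overline y\|>r$, first use plain controllability in time $T_0$ to reach $\overline y$ once (cost depending only on $y^0$, $\overline y$, not on $T$) and restart. This yields $J_T(u_T)\le J_T(u_{\mathrm{comp}})=:C$ uniformly in $T\ge T^*:=T_0$ (or $2T_0$).

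For stage (c), the core of the argument: from $J_T(u_T)\le C$ we know $\int_0^T\|y_T(t)-\overline y\|^2\,dt\le C$. Partition $[0,T]$ into windows of length $T_0$. On each window where the average of $\|y_T-\overline y\|^2$ is small enough, the mean-value / pigeonhole principle produces a time $\tau$ with $\|y_T(\tau)-\overline y\|$ small, and then one compares $u_T$ on that window to the competitor built from \eqref{eq: control.cost.estimate.1}–\eqref{eq: control.cost.estimate.2} that goes $y_T(\tau)\to\overline y\to y_T(\tau+T_0)$ (up to endpoints), exploiting optimality to force the running cost on that window to contract geometrically. Iterating the resulting recursion of the form $a_{k+1}\le \theta a_k$ (with $\theta<1$) over the windows, symmetrically from both ends $t=0$ and $t=T$, yields the two-sided exponential \eqref{eq: turnpike.2}; the globally Lipschitz bound on $f$ is what lets Grönwall transfer smallness of $\|y_T(\tau)-\overline y\|$ to smallness of $\|y_T(t)-\overline y\|$ across a full window, closing the bootstrap.

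The main obstacle is stage (c): making the window-to-window contraction rigorous without the PMP. The delicate points are (i) guaranteeing that the selected time $\tau$ in each window is genuinely close to $\overline y$ once $T^*$ is large enough — this requires quantifying how the total budget $C$ is distributed and may need a preliminary "most windows are good" counting argument before promoting it to "all windows after the first few are good"; (ii) ensuring the spliced competitor control on each window is admissible and its cost is controlled by $\|y_T(\tau)-\overline y\|$ and $\|y_T(\tau+T_0)-\overline y\|$ via \eqref{eq: control.cost.estimate.1}–\eqref{eq: control.cost.estimate.2}, which forces us to stay inside the ball of radius $r$ — hence the need to first drive the state into that ball and keep it there; and (iii) handling the final-time window, where the final cost $\phi(y_T(T))$ couples the last window to the terminal value and is responsible for the second exponential bump $e^{-\mu(T-t)}$ in \eqref{eq: turnpike.2} (and whose absence, when $\phi\equiv0$, yields the sharper stabilization statement alluded to in \Cref{cor: stabilisation}).
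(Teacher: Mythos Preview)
Your proposal is correct and essentially reproduces the paper's own proof: stage~(a) is \Cref{lem: quasiturnpike.1}, stage~(b) is immediate, and stage~(c) is the bootstrap of \Cref{sec: proof.turnpike}, assembled from the pigeonhole \Cref{lem: D1} and the fixed-endpoint competitor \Cref{lem: unif time estimates} (the key dynamic-programming observation being that $u_T|_{[\tau_1,\tau_2]}$ minimizes $J_{\tau_1,\tau_2}$ with prescribed endpoints $y_T(\tau_1),y_T(\tau_2)$). The only cosmetic difference is that the paper iterates over nested intervals $[n\tau, T-n\tau]$ shrinking by a large buffer $\tau$ (calibrated so that $4C_2^2/\sqrt{\tau}<1$ for contraction and $C_1/\sqrt{\tau}\le r$ to remain in the ball where \Cref{def: ctrl} applies) rather than consecutive windows of length~$T_0$, but your obstacles (i)--(iii) already anticipate exactly these calibrations.
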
	
	
	\noindent
	We sketch the idea of the proof (which may be found in \Cref{sec: proof.turnpike}) in \Cref{sec: sketch} below.
	The rate $\mu>0$ appearing in \eqref{eq: turnpike.2} depends on the datum $y^0$ due to the multiplicative form of the control, but is uniform with respect to $y^0$ when the control is \emph{additive}, namely, when $f_1, \ldots, f_m$ are nonzero constants. This is due to the form of the constant provided by Gr\"onwall inequality-based arguments (e.g. in \Cref{lem: C1} and \Cref{lem: unif time estimates}). We delay a discussion of the specific form of the constants to \Cref{sec: discussion}.
	
	\begin{remark}[On \eqref{eq: turnpike.1}]
	An exponential estimate for the optimal control $u_T$ is a hallmark of turnpike results obtained by analyzing the optimality system. Therein, the optimal control can be characterized explicitly via the adjoint state, which, much like the optimal state, fulfills an exponential estimate. 
	Since in this work we do not use the optimality system, we do not have as much information on $u_T(t)$ as we have on $y_T(t)-\overline{y}$. The latter quantity, in addition to being penalized by $J_T$, may be further estimated by using the system dynamics.
	In the context of \emph{driftless systems}, we show that $u_T(t)$ too decays exponentially in \Cref{cor: control.decay}, by using the homogeneity of the system with respect to the control.
	\end{remark}
	
	\noindent
	Before proceeding with further remarks, which we postpone to \Cref{sec: discussion}, let us state a couple of important corollaries of \Cref{thm: turnpike}.
	Firstly, when one considers an optimal control problem for $J_T$ without a final cost for the endpoint $y(T)$, namely taking $\phi\equiv0$ in \eqref{eq: J_T}, \Cref{thm: turnpike} can in fact be improved to an \emph{exponential stabilization} estimate to the running target $\overline{y}$. 
	
    \begin{cor}[Stabilization] \label{cor: stabilisation}
    Suppose that $\phi\equiv 0$ in \eqref{eq: J_T}.
    Under the assumptions of \Cref{thm: turnpike}, there exists a time $T^*>0$, and constants $C>0$ and $\mu>0$, all depending on $T_0,y^0,\overline{y}$, such that for any $T>T^*$, any global minimizer $u_T\in L^2(0,T;\R^m)$ to $J_T$ defined in \eqref{eq: J_T} and corresponding optimal state $y_T$ solution to \eqref{eq: nonlinear.control.system} with $y_T(0)=y^0$ satisfy \eqref{eq: turnpike.1} as well as
	\begin{equation}  \label{eq: stabilisation.statement}
	\left\|y_T(t)-\overline{y}\right\| \leqslant Ce^{-\mu t}
	\end{equation}
	for all $t\in[0,T]$.
    \end{cor}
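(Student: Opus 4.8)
The plan is to derive \Cref{cor: stabilisation} from \Cref{thm: turnpike} by showing that, when there is no terminal cost, the exponential arc carried by the term $e^{-\mu(T-t)}$ in \eqref{eq: turnpike.2} can be removed. First I would invoke \Cref{thm: turnpike} itself: for $T\geq T^*$ we already have $\|u_T\|_{L^2(0,T;\R^m)}\leq C_2$, which is precisely \eqref{eq: turnpike.1}, so only \eqref{eq: stabilisation.statement} remains. Moreover \eqref{eq: turnpike.2} provides an a priori smallness input on the interior of $[0,T]$: there is $s_0>0$, depending only on $C_1,\mu$ and the radius $r$ of \Cref{def: ctrl}, such that $\|y_T(t)-\overline{y}\|\leq r$ for all $t\in[s_0,T-s_0]$ (it suffices that $2C_1e^{-\mu s_0}\leq r$, which also forces enlarging $T^*$ so that $T\geq 2s_0$). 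Write $g(t):=\|y_T(t)-\overline{y}\|$ and $h(t):=\int_t^T g(\sigma)^2\diff\sigma$, so that $h'(t)=-g(t)^2$.

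The core is a dynamic-programming-plus-controllability estimate. Since $\phi\equiv0$, the restriction $u_T|_{[\tau,T]}$ is, for the datum $y_T(\tau)$, a global minimizer of the tail functional $v\mapsto\int_\tau^T\|y-\overline{y}\|^2\diff t+\int_\tau^T\|v\|^2\diff t$ subject to $\dot y=f(y,v)$ on $(\tau,T)$ with $y(\tau)=y_T(\tau)$ — otherwise splicing a strictly cheaper tail onto $u_T|_{[0,\tau]}$ would contradict optimality of $u_T$ for $J_T$. On the other hand, whenever $g(\tau)\leq r$ and $\tau\leq T-T_0$, the competitor that steers $y_T(\tau)$ to $\overline{y}$ on $[\tau,\tau+T_0]$ using \eqref{eq: control.cost.estimate.1} and then remains at $\overline{y}$ with zero control on $[\tau+T_0,T]$ (licit because $f(\overline{y},0)=f_0(\overline{y})=0$ by \eqref{eq: f0}) has tail cost at most $C_3\,g(\tau)^2$, where $C_3$ is produced by the cost estimate \eqref{eq: control.cost.estimate.1} together with a Gr\"onwall bound of the type of \Cref{lem: C1}. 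Comparing the two gives, for every $\tau\in[s_0,T-T_0]$,
\[
\int_\tau^T g(\sigma)^2\diff\sigma+\int_\tau^T\|u_T(\sigma)\|^2\diff\sigma\leq C_3\,g(\tau)^2.
\]
In particular $h(\tau)\leq C_3\,g(\tau)^2=-C_3\,h'(\tau)$, so $h'\leq -h/C_3$ on $[s_0,T-T_0]$, and Gr\"onwall yields the exponential decay $h(\tau)\leq h(s_0)\,e^{-(\tau-s_0)/C_3}$; along the way we also record the control bound $\|u_T\|_{L^2(\tau,\tau+T_0)}^2\leq C_3\,g(\tau)^2$.

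To pass from integral to pointwise decay, for $\tau^*\in[s_0+T_0,\,T-T_0]$ I would pick, by the mean value property, a point $\tau'\in[\tau^*-T_0,\tau^*-T_0/2]$ with $g(\tau')^2\leq \tfrac{2}{T_0}\int_{\tau^*-T_0}^{\tau^*-T_0/2}g^2\leq\tfrac{2}{T_0}\,h(\tau^*-T_0)$, hence $g(\tau')\leq C_4\,e^{-\mu'\tau^*}$ with $\mu':=1/(2C_3)$; since $\tau^*\in[\tau',\tau'+T_0]$, a Gr\"onwall continuation estimate on $(\tau',\tau^*)$ (again \Cref{lem: C1}), fed with $\|u_T\|_{L^2(\tau',\tau'+T_0)}\leq \sqrt{C_3}\,g(\tau')$, gives $g(\tau^*)\leq C_5\bigl(g(\tau')+\|u_T\|_{L^2(\tau',\tau^*)}\bigr)\leq C_6\,e^{-\mu'\tau^*}$. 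This handles the bulk of $[0,T]$; on $[0,s_0+T_0]$ the bound \eqref{eq: turnpike.2} already reads $g(t)\leq C_1(e^{-\mu t}+e^{-\mu(T-t)})\leq C\,e^{-\mu t}$ once $T^*$ is large enough that $e^{-\mu(T-t)}\leq e^{-\mu t}$ there, while on $[T-T_0,T]$ the optimality of $u_T|_{[T-T_0,T]}$ for the tail problem gives $\int_{T-T_0}^T\|u_T\|^2\leq C_3\,g(T-T_0)^2$, so one last continuation estimate over this window of length $T_0$ propagates the bound $g(T-T_0)\leq C\,e^{-\mu'(T-T_0)}$ already established above to all of $[T-T_0,T]$. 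Relabelling $C_1$ as the largest and $\mu$ as the smallest constant produced, and enlarging $T^*$ as needed, yields \eqref{eq: stabilisation.statement}.

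I expect the main difficulty to be bookkeeping rather than conceptual: the competitor comparison is valid only where $g(\tau)\leq r$, which is exactly why \Cref{thm: turnpike} must be used as an a priori interior input, and one then has to stitch the interior decay together with the two short boundary layers $[0,s_0+T_0]$ and $[T-T_0,T]$ while keeping all Gr\"onwall constants uniform in $T$. This uniformity is what the explicit form of the constants in \Cref{lem: C1} supplies, and it is also what may force the stabilization rate $\mu$ to be strictly smaller than the turnpike rate of \Cref{thm: turnpike}, though it remains positive.
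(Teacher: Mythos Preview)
Your argument is correct and complete up to the bookkeeping you flag. It is, however, a genuinely different route from the paper's.

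The paper's proof is much shorter: it splits $[0,T]$ at $T/2$. On $[0,T/2]$ the turnpike bound already gives $g(t)\leq 2C_1 e^{-\mu_1 t}$ since $T-t\geq t$ there. On $[T/2,T]$ the paper compares $u_T$ with a \emph{single} auxiliary control equal to $u_T$ on $[0,T/2]$ and to $0$ on $[T/2,T]$, reads off a bound $\int_{T/2}^T(\|u_T\|^2+g^2)\lesssim T\,g(T/2)^2\lesssim e^{-\mu_1 T/4}$, and then applies \Cref{lem: C1} once over the whole second half to get $g(t)\lesssim e^{-\mu_1 T/4}\leq e^{-\mu_1 t/4}$. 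No differential inequality, no mean-value sampling; just one comparison and one Gr\"onwall.

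You instead run tail optimality at every $\tau$, obtain $h'\leq -h/C_3$ for $h(\tau)=\int_\tau^T g^2$, integrate, and then localize. This is more work, but it buys two things. First, the decay rate $1/(2C_3)$ you produce is tied directly to the controllability cost constant rather than to the (unspecified) turnpike rate; the paper loses at least a factor of $4$ in the exponent at this step. Second, your competitor---steer to $\overline{y}$ in time $T_0$ via \eqref{eq: control.cost.estimate.1}, then rest---is correctly defined. The paper's single competitor asserts that with zero control on $[T/2,T]$ the associated state stays frozen at $y_T(T/2)$; this requires $f_0(y_T(T/2))=0$, which is not guaranteed by \eqref{eq: f0}. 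The paper's argument is easily repaired by substituting your competitor at the single time $T/2$, so your proof in effect contains the fix while also taking the longer, more quantitative path. One minor point: your ``for every $\tau\in[s_0,T-T_0]$'' should read $\tau\in[s_0,\,T-\max(s_0,T_0)]$, since you need both $g(\tau)\leq r$ and room for the $T_0$-steering; this is absorbed by the enlargement of $T^*$ you already allow.
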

    
    \noindent
    Strictly speaking, we see \Cref{cor: stabilisation} as a consequence of the strategy of proof of \Cref{thm: turnpike}, rather than a direct corollary of the statement. 
   \Cref{cor: stabilisation} may be proven independently of \Cref{thm: turnpike} by a simple adaptation of the proof strategy presented in \Cref{sec: sketch}, so we omit the proof. 
   This adaptation is transparent in the proof of \Cref{thm: turnpike.heat}, for which we provide  greater detail, albeit for more specific dynamics (the semilinear heat equation). 
    
    On another hand, when the underlying dynamics \eqref{eq: nonlinear.control.system} are of \emph{driftless control affine} form (namely, $f_0\equiv0$ in \eqref{eq: nonlinearity.control.affine}), 
    we can obtain an exponential decay for the optimal controls as well. Note that in this case, any $\overline{y} \in \R^d$ is an admissible running target for $J_T$, since $f(\overline{y},0)=0$ for any $\overline{y}\in\R^d$.
    
      \begin{cor}[Control decay] \label{cor: control.decay}
      Suppose that $f_0\equiv 0$ in \eqref{eq: nonlinearity.control.affine} and $\phi\equiv 0$ in \eqref{eq: J_T}.
    Under the assumptions of \Cref{thm: turnpike}, there exists a time $T^*>0$, and constants $C>0$ and $\mu>0$, all depending on $T_0,y^0,\overline{y}$, such that for any $T>T^*$, any global minimizer $u_T\in L^2(0,T;\R^m)$ to $J_T$ defined in \eqref{eq: J_T} and corresponding optimal state $y_T$ solution to \eqref{eq: nonlinear.control.system} with $y_T(0)=y^0$ satisfy \eqref{eq: stabilisation.statement} as well as
	\begin{equation} \label{eq: turnpike.4}
	\left\|u_T(t)\right\| \leqslant C e^{-\mu t}
	\end{equation}
	for a.e. $t \in [0,T]$.
    \end{cor}
    
    \noindent
   	\Cref{cor: stabilisation} and \Cref{cor: control.decay} are in particular applicable for the continuous time analog \eqref{eq: neural.net.2} of ResNets (see \Cref{rem: on.the.nonlinearity} for \eqref{eq: neural.net}). 
   The proof of \Cref{cor: control.decay} (see \Cref{sec: proof.control.decay}) will follow by firstly using a specific suboptimal control (constructed using the time-scaling specific to driftless systems) to estimate $J_T(u_T)$ and obtain 
    \begin{equation*}
    \frac12\int_{t}^{t+h} \|u_T(s)\|^2 \diff s \leqslant\int_{t}^{t+h} \|y_T(s)-\overline{y}\|^2 \diff s
    \end{equation*}
    for $h$ small enough, an estimate which, chained with \Cref{cor: stabilisation} and the Lebesgue differentiation theorem, will suffice to conclude.    
	
	\subsubsection{Sketch of the proof of \Cref{thm: turnpike}} \label{sec: sketch}
	
	The proof of \Cref{thm: turnpike} may be found in \Cref{sec: proof.turnpike}.
	It roughly follows the following scheme (see \Cref{fig: fig.1}).
	
		\begin{itemize}
\item \textbf{Uniform bound of $J_T(u_T)$.} 
In \Cref{lem: quasiturnpike.1} we show that there exists a constant $C_0>0$ independent of $T$ (but depending on $y^0,\overline{y},f,\phi$) such that
\begin{equation} \label{eq: JT.bounded}
J_T(u_T)\leqslant C_0
\end{equation}
holds for all $T>0$. As $\overline{y}$ is a steady state, \eqref{eq: JT.bounded} can be shown easily. Indeed, using controllability (without the estimates on the cost) one finds a control $u^\dagger$ such that the solution $y^\dagger$ to \eqref{eq: nonlinear.control.system} on $[0,T_0]$ satisfies $
y^\dagger(T_0)=\overline{y}.$
Setting $u^{\text{aux}}(t):=u^\dagger(t)1_{[0,T_0]}(t)$ for $t\in[0,T]$, one sees that $y^{\text{aux}}(t)=\overline{y}$ for $t\in[T_0,T]$,
whence $J_T(u^{\text{aux}})=J_{T_0}(u^\dagger)$, and using the inequality $J_T(u_T)\leqslant J_T(u^{\text{aux}})$ yields \eqref{eq: JT.bounded}.
\smallskip
	
\item \textbf{$L^\infty_t$ bound of $y(t)-\overline{y}$.}
In \Cref{lem: C1}, we show that
\begin{equation} \label{eq: estimate.no.number}
\sup_{t\in[0,T]}\|y(t)-\overline{y}\|\leqslant C\Big(\|y(0)-\overline{y}\|+\|y-\overline{y}\|_{L^2(0,T;\mathbb{R}^d)}+\|u\|_{L^2(0,T;\mathbb{R}^m)}\Big)
\end{equation}
holds for some constant $C>0$ depending on $T$ solely through $\|u\|_{L^2(0,T;\mathbb{R}^m)}$, in a continuous and increasing manner. The estimate holds for any, not necessarily optimal $u$. 
The globally Lipschitz assumption on the dynamics in \eqref{eq: nonlinear.control.system} is used precisely for this estimate.
Combined with \eqref{eq: JT.bounded}, estimate \eqref{eq: estimate.no.number}  yields
\begin{equation} \label{eq: unif.bound.explain}
\sup_{t\in[0,T]}\|y_T(t)-\overline{y}\|^2+J_T(u_T)\leqslant C_1^2
\end{equation}
for some constant $C_1>0$ independent of $T$ (but depending on $y^0,\overline{y},f,\phi$). 
\smallskip 

\item \textbf{Turnpike away from the middle of $[0,T]$.} Estimate \eqref{eq: unif.bound.explain} yields turnpike for $t\in[0,\tau+T_0]\cup[T-(\tau+T_0),T]$, where $\tau>0$ is a degree of freedom, independent of $T$, to be chosen later on, while $T_0$ is the controllability time for \eqref{eq: nonlinear.control.system}. Indeed, for $t\in[0,\tau+T_0]$, from \eqref{eq: unif.bound.explain} one sees that
\begin{equation*}
\|y_T(t)-\overline{y}\|\leqslant C_1e^{\mu t} e^{-\mu t}\leqslant C_1e^{\mu(\tau+T_0)}\Big(e^{-\mu t} + e^{-\mu(T-t)}\Big)
\end{equation*}
holds for all $\mu>0$, with $C_1>0$ as in \eqref{eq: unif.bound.explain} (thus independent of $T,\tau$). A similar computation can be repeated for $t\in[T-(\tau+T_0),T]$. At this point, one already notes that $T$ needs to be chosen sufficiently large, namely, 
\begin{equation}\label{eq: T>}
T>2(\tau+T_0).
\end{equation}
Actually, $T^*:=2(\tau+T_0)$ in Theorem \ref{thm: turnpike}.
\smallskip

\item \textbf{Turnpike in the middle of $[0,T]$.}
To obtain the exponential estimate for $t\in[\tau+T_0, T-(\tau+T_0)]$, it is critical to choose $\tau>0$ large enough. The clue is to first prove that there exists some constant $C_*>0$ independent of both $T$ and $\tau$ such that
\begin{equation} \label{eq: bootstrap.ineq}
\sup_{t\in[n\tau,T-n\tau]}\|y_T(t)-\overline{y}\|\leqslant\left(\frac{C_*}{\sqrt{\tau}}\right)^n
\end{equation}
holds for all integers $1\leqslant n\leqslant \frac{1}{\tau}(\frac{T}{2}-T_0)$. See \Cref{fig: fig.1} for a graphical depiction. 
By virtue of \eqref{eq: T>} we have $\frac{1}{\tau}(\frac{T}{2}-T_0)>1$, and this upper bound on $n$ will become clear in the next step.
Suppose that \eqref{eq: bootstrap.ineq} does indeed hold. Then from \eqref{eq: bootstrap.ineq}
\begin{equation} \label{eq: 10.13}
\|y_T(t)-\overline{y}\|\leqslant\exp\left(-n\log\left(\frac{\sqrt{\tau}}{C_*}\right)\right)
\end{equation}
holds for all $t\in[n\tau,T-n\tau]$ and $n$ as above. Choosing $\tau>C_*^2$, which up to this point was arbitrary, yields the positivity of the logarithm appearing in \eqref{eq: 10.13}: 
\begin{equation*}
\sigma:=\log\left(\frac{\sqrt{\tau}}{C_*}\right)>0.
\end{equation*}
Now fix $t\in[\tau+T_0,T-(\tau+T_0)].$
We look to choose the integer $n=n(t)$ as to have $t\in[n(t)\tau,T-n(t)\tau]$ as well as $1\leqslant n(t)\leqslant\frac{1}{\tau}(\frac{T}{2}-T_0)$, so that estimate \eqref{eq: 10.13} also holds for $t$ fixed as just before. 
After some elementary computations, one can see that for both these conditions to hold, it is necessary and sufficient for $n(t)$ to be such that 
\begin{equation*}
1\leqslant n(t)\leqslant\frac{t}{\tau+T_0} \hspace{0.5cm} \text{ and } \hspace{0.5cm} 1\leqslant n(t)\leqslant\frac{T-t}{\tau+T_0}.
\end{equation*}
This leads us to set
\begin{equation*}
n(t):=\min\left\{\left\lfloor\frac{t}{\tau+T_0}\right\rfloor,\left\lfloor\frac{T-t}{\tau+T_0}\right\rfloor\right\}.
\end{equation*}
With $t$ fixed as above, and $n(t)$ set as such, one sees that \eqref{eq: 10.13} holds. 
Namely, we have
\begin{equation} \label{eq: 10.14}
\|y_T(t)-\overline{y}\|\leqslant\exp\left(-n(t)\sigma\right).
\end{equation}
But furthermore, one of either 
\begin{equation*}
n(t)\geqslant\frac{t}{\tau+T_0}-1 \hspace{0.5cm} \text{ or } \hspace{0.5cm} n(t)\geqslant\frac{T-t}{\tau+T_0}-1
\end{equation*}
holds by definition of $n(t)$, and so
\begin{equation}\label{eq: 10.15}
\exp\left(-n(t)\sigma\right)\leqslant\exp(\sigma)\left(\exp\left(-\frac{t}{\tau+T_0}\sigma\right)+\exp\left(-\frac{T-t}{\tau+T_0}\sigma\right)\right).
\end{equation}
Since $t\in[\tau+T_0, T-(\tau+T_0)]$ was arbitrary, chaining \eqref{eq: 10.14} and \eqref{eq: 10.15} leads us to the desired turnpike inequality in $[\tau+T_0, T-(\tau+T_0)]$: 
\begin{equation*}
\|y_T(t)-\overline{y}\|\leqslant e^\sigma\Big(e^{-\mu t}+e^{-\mu (T-t)}\Big),
\end{equation*}
where 
\begin{equation*}
\mu:=\frac{\sigma}{\tau+T_0}=\frac{\log\left(\frac{\sqrt{\tau}}{C_*}\right)}{\tau+T_0}>0.
\end{equation*}
Thus, the proof would be complete once \eqref{eq: bootstrap.ineq} is shown to hold.
 \smallskip
 
 \begin{figure}
	\center
	\includegraphics[scale=0.75]{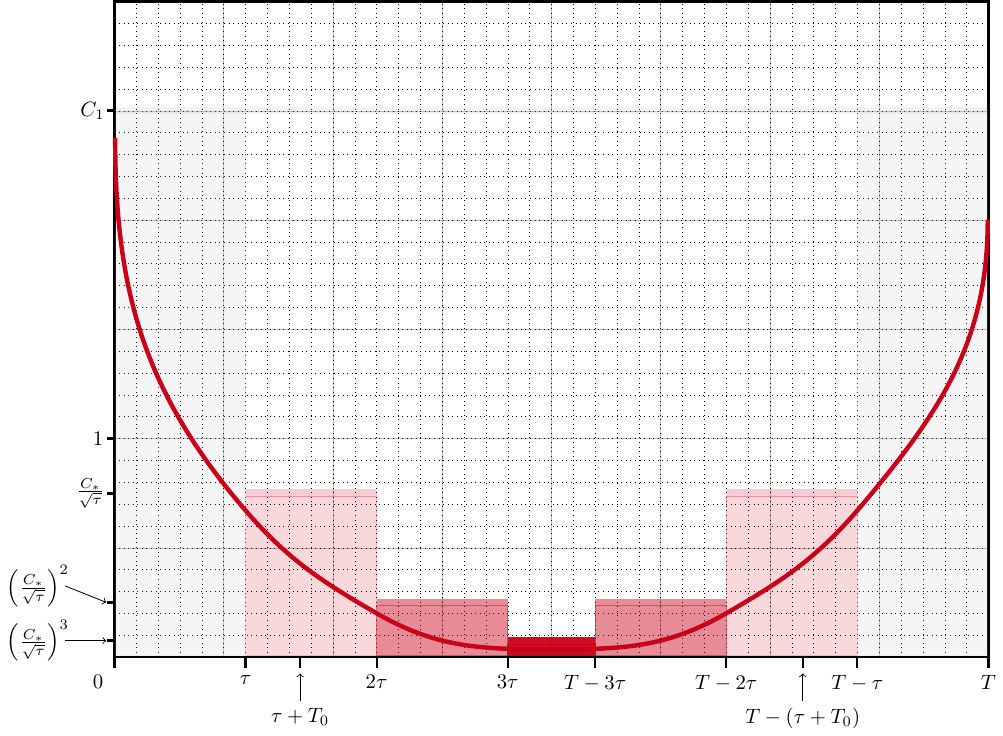}
\caption{The strategy is inspired by the shape of $t\mapsto e^{-t}+e^{-(T-t)}$ on $[0,T]$, which attains its minimum at $t=\frac{T}{2}$. Over successively smaller intervals of the form $[n\tau,T-n\tau]$, with $1\leqslant n\leqslant\frac{1}{\tau}(\frac{T}{2}-T_0)$, one sees that the upper bound in \eqref{eq: bootstrap.ineq}, which is uniform in $T$, decreases exponentially and captures the double-arc exponential of $\|y_T(t)-\overline{y}\|$ (\emph{dotted}) more accurately.} \label{fig: fig.1}

\end{figure}

\item \textbf{Proving \eqref{eq: bootstrap.ineq} by induction.}
To show \eqref{eq: bootstrap.ineq}, one performs an induction over $1\leqslant n\leqslant\frac{1}{\tau}(\frac{T}{2}-T_0)$. The upper bound on $n$ appears so that there are always a couple of disjoint intervals of length $T_0$ within every interval of the form $[n\tau,T-n\tau]$. Let us sketch the proof of the initialization stage $n=1$. 
Arguing by contradiction (see \Cref{lem: D1}), one can first find  
$\tau_1\in[0,\tau)$ and $\tau_2 \in (T-\tau,T]$ such that 
\begin{equation} \label{eq: 10.10}
\|y_T(\tau_j)-\overline{y}\|\stackrel{\mathrm{Lem.} \ref{lem: D1}}{\leqslant}\frac{\|y_T-\overline{y}\|_{L^2(0,T; \mathbb{R}^d)}}{\sqrt{\tau}}\stackrel{\eqref{eq: unif.bound.explain}}{\leqslant} \frac{C_1}{\sqrt{\tau}}.
\end{equation}
This allows us to recover the desired factor of an inverse of $\tau$.
Then, restricting $u_T$ to the subinterval $[\tau_1,\tau_2]$, one sees that it is a solution to
\begin{equation} \label{eq: aux.ocp}
\inf_{\substack{u\in L^2(\tau_1,\tau_2;\mathbb{R}^m)\\ \dot{y}=f(y,u) \text{ in } (\tau_1,\tau_2)\\y(\tau_1)=y_T(\tau_1)\\y(\tau_2)=y_T(\tau_2)} } \int_{\tau_1}^{\tau_2}\|y(t)-\overline{y}\|^2\diff t + \int_{\tau_1}^{\tau_2} \|u(t)\|^2\diff t.
\end{equation}
Just as in the first step of the proof, we look to estimate the functional minimized in \eqref{eq: aux.ocp}, but this time, not only uniformly in $T, \tau_1,\tau_2$, rather also in terms of the distance of $y_T(\tau_j)$ to $\overline{y}$. 
More specifically, we show that there exists some constant $C(r)>0$ independent of $T,\tau_1,\tau_2, \tau$ (but depending on $r, T_0, f$) such that
\begin{equation} \label{eq: 10.12}
\|y_T(t)-\overline{y}\|\leqslant C(r)\Big(\|y_T(\tau_1)-\overline{y}\| + \|y_T(\tau_2)-\overline{y}\|\Big)
\end{equation}
holds for all $t\in[\tau_1,\tau_2]$. 
To this end, we invoke \Cref{lem: C1}, and thus it suffices to bound the functional in \eqref{eq: aux.ocp} by means of the right-hand-side in \eqref{eq: 10.12}. To achieve this, since $\tau_2-\tau_1\geqslant 2T_0$, we may first control starting from $y_T(\tau_1)$ at time $t=\tau_1$ to $\overline{y}$ in time $t=\tau_1+T_0$; we stay at $\overline{y}$ by switching off the control until time $t=\tau_2-T_0$, from which we control to $y_T(\tau_2)$ in time $t=\tau_2$. By chaining this strategy into one single control\footnote{Such controls are referred to as \emph{quasi-turnpike controls}, since they (and their associated trajectories) look like rough approximations of exact turnpikes (see \Cref{fig: fig.3}).} $u^{\text{aux}}$, we note that $u^{\text{aux}}$ is precisely bounded by the right-hand-side in \eqref{eq: 10.12} via \Cref{def: ctrl} (we are in the desired ball by selecting $\tau>\sfrac{C_1^2}{r^2}$ in \eqref{eq: 10.10}), while the state tracking term can subsequently be covered by the Gr\"onwall inequality.
Estimate \eqref{eq: 10.12} combined with \eqref{eq: 10.10} leads to
\begin{equation*}
\|y_T(t)-\overline{y}\|\leqslant\frac{2C_1\cdot C(r)}{\sqrt{\tau}}
\end{equation*}
for all $t\in[\tau_1,\tau_2]$, and thus also for all $t\in[\tau,T-\tau]$. This is precisely \eqref{eq: bootstrap.ineq} for $n=1$. 
The above argument can then be repeated by induction on smaller intervals $[n\tau,T-n\tau]$ to obtain \eqref{eq: bootstrap.ineq} in the general case.
\end{itemize}
	
	\subsection{Discussion} \label{sec: discussion}
	 Several pertinent remarks are in order.

	\begin{remark}[On \Cref{def: ctrl}] \label{rem: later.on}

	\begin{itemize}	
	\item 
	In the driftless case ($f_0=0$ in \eqref{eq: nonlinearity.control.affine}), the  Chow-Rashevskii theorem (\cite[Chapter 3, Section 3.3]{coron2007control}), characterized by iterated Lie brackets, is a necessary and sufficient condition for the global exact controllability of systems with smooth vector fields.
	But general necessary and sufficient conditions which ensure the exact controllability of control-affine systems are not known to our knowledge -- see \cite[Chapter 3]{coron2007control}. 
	This is mainly due to the drift term $f_0$, which affects the geometry of the problem and may pose obstructions to the controllability in arbitrary time -- see \cite{beauchard2018quadratic} for a survey on these issues.
	We do insist however, that we merely require controllability in a possibly large time $T_0$, and not necessarily in any arbitrarily small time. 
	\smallskip
	
	\item 
	While we suppose that the underlying system is controllable for arbitrarily large data, through \eqref{eq: control.cost.estimate.1} -- \eqref{eq: control.cost.estimate.2} we solely assume that the cost is proportionate to the distance from the chosen steady state $\overline{y}$ in some, possibly arbitrarily small ball around this steady state.
	As the estimates \eqref{eq: control.cost.estimate.1} -- \eqref{eq: control.cost.estimate.2} are more commonly encountered in the linear systems setting $\dot{y}=Ay+Bu$, they thus also hold for semilinear systems where controllability is obtained by perturbation arguments. 
	In such contexts, it is well-known (see e.g. \cite[Remark 2.2]{zuazua2007controllability}) that the minimal $L^2$--norm control $u$ satisfies
	\begin{equation*}
	\|u\|_{L^2(0,T_0;\R^m)}\leqslant C({T_0})\Big(\left\|y^0\right\| +\left\|y^1\right\|\Big)
	\end{equation*}
	for some $C(T_0)>0$.
	This makes \Cref{def: ctrl} entirely plausible in the settings mentioned above. Indeed, we consider $z:=y-\overline{y}$, then either $z^0=0$ (if $y^0 = \overline{y}$) or $z^1=0$ (if $y^1=\overline{y}$). The control $u$ steering $y$ from $y^0$ to $y^1$ in time $T$ would then be the same as the one steering $z$ from either $0$ to $y^1-\overline{y}$ or from $y^0-\overline{y}$ to $0$ in time $T$, and the above estimate would yield the desired assumption.
	\smallskip
	
	\item 
	While there exist necessary and sufficient conditions for ensuring the exact controllability of driftless systems: $\dot{y}(t)=\sum_{j=1}^m u_j(t)f_j(y(t))$, we cannot ensure the validity of estimates \eqref{eq: control.cost.estimate.1} -- \eqref{eq: control.cost.estimate.2} in the underactuated regime, namely when $m<d$. This is due to the so-called ball-box theorem in sub-Riemannian geometry (\cite{agrachev2019comprehensive}), for smooth vector fields $f_1,\ldots,f_m$. This theorem states the following. Suppose that the vector fields $f_1,\ldots,f_m$ satisfy the H\"ormander condition, namely that the iterated Lie brackets of these vector fields at any point span $\mathbb{R}^d$. Denote $\bigtriangleup^1(x):=\mathrm{span}\{f_1(x),\ldots, f_m(x)\}$ for $x\in\mathbb{R}^d$, and $\bigtriangleup^{k+1}:=\bigtriangleup^1+[\bigtriangleup^{k},\bigtriangleup^1]$ for $k\geqslant1$. Then by virtue of the H\"ormander condition, there exists some $\kappa\geqslant1$ such that $\bigtriangleup^\kappa(x)=\mathbb{R}^d$ for all $x$. Furthermore, by the ball-box theorem, for $y^0$ close enough to $y^1$, an estimate of the form $\|y^0-y^1\|\lesssim d_{\mathrm{SR}}(y^0,y^1)\lesssim\|y^0-y^1\|^{\sfrac{1}{\kappa}}$ holds, where $d_{\mathrm{SR}}(y^0,y^1)$ is the sub-Riemannian distance of $y^0$ to $y^1$, equal (modulo a scalar multiple depending on $T_0$) to the $\inf$ defined in \eqref{eq: control.cost.estimate.1} -- \eqref{eq: control.cost.estimate.2}. 
	Herein, one sees that if $m\geqslant d$, it may happen to find at least $d$ among $m$ vector fields which are linearly independent, thus ensuring that $\kappa=1$, as desired; this is quite simply impossible when $m<d$. 
	 This exact constraint is also encountered in \cite[Theorem 5.1]{esteve2020large}, where the estimates \eqref{eq: control.cost.estimate.1} -- \eqref{eq: control.cost.estimate.2} are shown to hold for $m\geqslant d$ in the driftless setting. In view of this, generalizing the assumptions \eqref{eq: control.cost.estimate.1} -- \eqref{eq: control.cost.estimate.2} to fractional powers of the upper bounds appearing therein is an important open problem. 
	 Further clarity regarding this issue is also needed for general control-affine systems beyond semilinear systems, namely those for which linearization techniques might not apply. We refer to \cite{jean2015complexity, prandi2014holder} for developments in this direction.
	\end{itemize}
	\end{remark}
	
	\begin{remark}[On the time $T^*$] \label{rem: T*}
	Reading the sketch of proof, one notes that $T^*=2(T_0+\tau)$, where $T_0$ is the controllability time, and $\tau>0$ is chosen sufficiently large. Reading even further, one sees that $\tau$ needs to be at least larger than $\sfrac{C_1^2}{r^2}$, where the constant $C_1>0$ appears in \eqref{eq: JT.bounded} and \eqref{eq: unif.bound.explain}. 
	The latter constant is independent of $T$ and $r$, but does depend on $T_0$ (and the data $y^0,\overline{y}$) through the map 
	\begin{equation*}
	T_0\mapsto\mathfrak{C}(T_0):=\inf_{\substack{u\\y(0)=y^0\\y(T_0)=\overline{y}}}\left(\|u\|_{L^2(0, T_0;\mathbb{R}^m)}+\|y-\overline{y}\|_{L^2(0,T_0;\mathbb{R}^d)}\right).
	\end{equation*}
	Actually, due to the innate Gr\"onwall-based argument in \Cref{lem: C1}, $C_1$ will roughly be of the form $C_1=\mathfrak{C}(T_0)\exp({\mathfrak{C}(T_0)})$.
	If the system is controllable in any time, the cost $\mathfrak{C}(T_0)$ typically explodes as $T_0\searrow0$, and is bounded for $T_0\gg1$ (all relative to the distance of $y^0$ to $\overline{y}$). Therefore, according to our strategy, $T^*$ should increase at least linearly with $T_0\gg1$, and should explode with $\mathfrak{C}(T_0)$ when $T_0\searrow0$. This discussion also indicates the dependence of $T^*$ with respect to the radius $r$ of the ball in which the estimates of \Cref{def: ctrl} hold.
	\end{remark}
	
	\begin{remark}[On the constants $C$ and $\mu$] Once again, by reading the sketch of proof, one can see that the constants $C>0$ and $\mu>0$ appearing in the turnpike estimate \eqref{eq: turnpike.2} are explicit (albeit rather compound). 
	Per the sketch and \eqref{eq: mu}, $\mu>0$ is given by
	\begin{equation*}
	\mu:=\frac{\log\left(\frac{\sqrt{\tau}}{4C_\bullet^2}\right)}{\tau+T_0}=\frac{\log\left(\frac{\tau}{16C_\bullet^4}\right)}{T_*},
	\end{equation*}
	where $C_\bullet=C_\bullet(r,T_0)>0$ is the constant appearing in \eqref{eq: 10.12}; more specifically, the constant stemming from \Cref{lem: unif time estimates}. Moreover, $\tau>16C_\bullet^4+\frac{C_1^2+4C_1^2C_\bullet^2}{r^2}$ is arbitrary but fixed (as seen in \eqref{eq: buffer.tau}, where $C_1$ is the same as in \Cref{rem: T*}). On the other hand, per \eqref{eq: turnpike.constant.C}, the constant $C>0$ appearing in \eqref{eq: turnpike.2} takes the form
	\begin{equation*}
	C:=\max\left\{C_1,\frac{\sqrt{\tau}}{8C_\bullet^2}\max\left\{1,\frac{C_\bullet}{C_1}\right\}\right\}.
	\end{equation*} 
	\end{remark}
	
	\begin{remark}[On the nonlinearity] \label{rem: on.the.nonlinearity}

	With little modifications, \Cref{thm: turnpike} and \Cref{cor: stabilisation} also apply to system \eqref{eq: nonlinear.control.system} with nonlinearities $f$ of the form
	\begin{equation} \label{eq: nonlinearity.2}
		f(y,u) = \sum_{j=1}^m f_j(u_jy) \hspace{1cm} \text{ for } (y,u) \in \R^d\times \R^m
	\end{equation}
	where the vector fields $f_1,\ldots, f_m \in \Lip(\R^d;\R^d)$ are additionally assumed to be positively homogeneous of degree $1$.
	Such nonlinearities are motivated by \eqref{eq: neural.net}.
	Due to the homogeneity of the vector fields in \eqref{eq: nonlinearity.2}, the corresponding optimal steady states coincide with those of the driftless case, namely $(u_s, y_s) = (0, \overline{y})$ for any $\overline{y}\in\R^d$.
	\end{remark}
		   
	\section{Infinite-dimensional systems} \label{sec: pde}
	
	\noindent
	We illustrate the flexibility of the finite-dimensional arguments and adapt them to the semilinear wave and heat equation. 
	As a matter of fact, the only difference between the finite and infinite dimensional setting is in the proof of uniform control and state bounds by means of quasi-turnpike controls. The specific proof of turnpike is identical in both cases.
	We distinguish the case of the wave and heat equation because of the validity of the PDE analog of \Cref{def: ctrl}, as made more precise below.

	\subsection{Semilinear wave equation}
	
	Let $T>0$ and let $\Omega \subset \R^d$ be a bounded and (at least $C^2$)  regular domain.
	We will be interested in control systems of the form
	\begin{equation} \label{eq: nonlinear.wave.control}
	\begin{dcases}
	\del_t^2 y - \Delta y + f(y) = u\one_\omega &\text{ in } (0, T) \times \Omega\\
	y = 0 &\text{ on } (0, T) \times \del \Omega \\
	(y, \del_t y)|_{t=0} = \*y^0 &\text{ in } \Omega.
	\end{dcases}
	\end{equation}
	Here $f \in \Lip(\R)$, $\omega \subset \Omega$ is open (with geometric assumptions given in \eqref{eq: omega.def}), whereas $\*y^0 = \left(y^0_1, y^0_2\right)$ is a given initial datum. 
	It is well-known, by fixed-point arguments, that for any initial data $\*y^0 = \left(y^0_1, y^0_2\right) \in H^1_0(\Omega) \times L^2(\Omega)$ and for any $u \in L^2((0,T)\times\omega)$, there exists a unique finite-energy solution $
	y\in C^0([0,T]; H^1_0(\Omega))\cap C^1([0,T];L^2(\Omega))$
	to \eqref{eq: nonlinear.wave.control}.
	As in the finite-dimensional case, we will address the behavior when $T\gg 1$ of global minimizers $u_T \in L^2((0,T)\times \omega)$ to nonnegative functionals of the form
		\begin{equation} \label{eq: J_T.wave}
	J_T(u) := \phi(y(T)) + \int_0^T \|y(t)-\overline{y}\|_{H^1_0(\Omega)}^2\diff t + \int_0^T \|\del_t y(t)\|^2_{L^2(\Omega)}\diff t + \int_0^T \|u(t)\|_{L^2(\omega)}^2 \diff t,
	\end{equation}
    and of the corresponding solution $y_T$ to \eqref{eq: nonlinear.wave.control}. 
    Here $\phi:L^2(\Omega)\to\R_+$ is a given continuous functional, while $\overline{y}\in H^1_0(\Omega)$ is a running target which we select as an uncontrolled steady state of \eqref{eq: nonlinear.wave.control}, namely we assume that $\overline{y}$ is some solution\footnote{There is no need for the solution of \eqref{eq: steady.state.heat} to be unique.} to 
	\begin{equation} \label{eq: steady.state.heat}
	\begin{dcases}
	-\Delta \overline{y} + f(\overline{y}) = 0 &\text{ in }\Omega\\
	\overline{y} = 0 &\text{ on }\del\Omega.
	\end{dcases}
	\end{equation}
	We henceforth moreover assume that $f, \Omega$ are such that a solution to \eqref{eq: steady.state.heat} exists. This can be ensured in a variety of different cases, including, for instance (see \cite{cazenave2006introduction, lions1982existence} for further results):
	\begin{itemize}
	\item If $f(0) = 0$, then clearly $\overline{y}\equiv 0$ is one solution. 
	But if moreover there exist $p\in(1,\infty)$ for $d=1,2$ or $p\in\left(1, \sfrac{d+2}{d-2}\right)$, $\sigma<\lambda_1(\Omega)$ and $\theta>2$ such that
	\begin{align*}
	|f(s)|\leqslant C(1+|s|^p) \hspace{1cm} &\text{ for all }s\in\R \\
	-\int_0^s f(\zeta)\diff\zeta\leqslant\frac{\sigma}{2} s^2 \hspace{1cm} &\text{ for }|s|\ll1\\
	0 < -\theta \int_0^s f(\zeta)\diff \zeta \leqslant -s\int_0^s f(\zeta)\diff \zeta \hspace{1cm} &\text{ for }|s|\gg1
	\end{align*}
	then a nontrivial solution $\overline{y}\in H^1_0(\Omega)$, $\overline{y}\not\equiv 0$ also exists. We refer to \cite[Theorem 2.5.6]{cazenave2006introduction}.
	This fact is a consequence of the mountain pass theorem.
	Here $\lambda_1(\Omega)$ denotes the first eigenvalue of the Dirichlet Laplacian $-\Delta$.
	\smallskip
	
	\item When $d=1$ and $\Omega = (-R, R)$, then both necessary and sufficient conditions on $f$ can be provided ensuring the existence of nontrivial solutions -- see \cite[Theorem 1.2.3]{cazenave2006introduction}.
	\end{itemize}
	
	\noindent
	The case of a controlled steady state (namely adding $\overline{u}\one_\omega$ in \eqref{eq: steady.state.heat}) may also be considered, under the condition that the functional $J_T$ is modified appropriately as discussed in \Cref{rem: running.target}.
	The existence of minimizers to $J_T$ again follows by the direct method in the calculus of variations.
	We note that, since $\overline{y}$ is fixed as above, the pair $(u_s, y_s) \equiv (0, \overline{y})$ is the unique solution to the steady optimal control problem
	\begin{equation*}
	\inf_{\substack{(y,u)\in H^1_0(\Omega)\times L^2(\omega)\\ y \text{ solves} \eqref{eq: poisson.stat}}} \|y-\overline{y}\|^2_{H^1_0(\Omega)} + \|u\|^2_{L^2(\omega)}  
	\end{equation*}
	where the steady equation is 
	\begin{equation}\label{eq: poisson.stat}
	\begin{dcases}
	-\Delta y + f(y) = u\one_\omega &\text{ in } \Omega\\
	y = 0 &\text{ on } \del \Omega.
	\end{dcases}
	\end{equation}
	This is because the functional in the expression above attains its minimum, equal to $0$, precisely at $(0, \overline{y})$, a pair which satisfies the constraint provided by the elliptic equation.
	Before proceeding, we need to define the appropriate geometric setup for ensuring the exact controllability of \eqref{eq: nonlinear.wave.control} when $d\geqslant 2$. For any fixed $x_\circ\in\R^d \setminus \overline{\Omega}$, we define
	\begin{equation*}
	\Gamma(x_\circ):= \left\{x\in\del\Omega\colon(x-x_\circ)\cdot\nu(x)>0\right\}
	\end{equation*}
	where $\nu(x)$ denotes the outward unit normal at $x\in\del\Omega$. The set $\Gamma(x_\circ)$ coincides with the subset of the boundary arising usually in the context of the multiplier method \cite{lions1988controlabilite}. We will suppose that for some $\delta>0$ and $x_\circ\in\R^d \setminus \overline{\Omega}$,
	\begin{equation} \label{eq: omega.def}
	\omega = \mathscr{O}_\delta(\Gamma(x_\circ))\cap\Omega,
	\end{equation}
	where $\mathscr{O}_\delta(\Gamma(x_\circ)):=\left\{x \in \R^d \colon\|x-x'\| < \delta \text{ for some } x' \in \Gamma(x_\circ)\right\}$. 
	It is known that, under these geometric assumptions on $\omega$, and since $f \in \Lip(\R)$, the wave equation \eqref{eq: nonlinear.wave.control} is exactly controllable in any time $T>T_{\min}(\omega,\Omega)$, where 
	\begin{equation} \label{eq: Tmin}
	T_{\min}(\omega,\Omega)=2\max_{x\in\overline{\Omega}}\|x-x_\circ\|.
	\end{equation}
	(See \cite{fu2007exact, zhang2004exact, zuazua1991exact}, and also the introduction of \cite{joly2014note} for an ample survey of controllability results for semilinear wave equations.)		
	We may now state our main result in the context of the wave equation.  
	
	\begin{theorem}[Turnpike] \label{thm: turnpike.wave}
	
	Suppose that $f\in\Lip(\R)$ and $f(0)=0$. Let $\overline{y}\in H^1_0(\Omega)$ be any solution to \eqref{eq: steady.state.heat}.
	Let $\phi\in\mathscr{L}(L^2(\Omega);\R_+)$, and suppose that $\omega$ is as in \eqref{eq: omega.def}. 
	Then for any $\*y^0 \in H^1_0(\Omega)\times L^2(\Omega)$, there exists a time $T^*>T_{\min}(\omega, \Omega)$, and constants $C>0$ and $\mu>0$, such that for any $T>T^*$, any global minimizer $u_T\in L^2((0,T)\times\omega)$ to $J_T$ defined in \eqref{eq: J_T.wave} and corresponding optimal state $y_T$ solution to \eqref{eq: nonlinear.wave.control} satisfy
	\begin{equation*} \label{eq: turnpike.2.wave}
	\left\|y_T(t)-\overline{y}\right\|_{H^1_0(\Omega)} + \left\|\del_t y_T(t)\right\|_{L^2(\Omega)} \leqslant C\left( e^{-\mu t} + e^{-\mu(T-t)} \right)
	\end{equation*}
	for all $t \in [0, T]$, and
	\begin{equation*} \label{eq: turnpike.1.wave}
	\| u_T\|_{L^2((0,T)\times\omega)} \leqslant C.
	\end{equation*}
	Moreover, $\mu>0$ is independent of $\*y^0$.
	\end{theorem}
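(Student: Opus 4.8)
The plan is to mirror the finite-dimensional argument of \Cref{thm: turnpike}, since the authors explicitly state that ``the specific proof of turnpike is identical in both cases'' and only the quasi-turnpike uniform-bound step differs. First I would set up the analog of \Cref{def: ctrl} for \eqref{eq: nonlinear.wave.control}: given the geometric condition \eqref{eq: omega.def}, pick $T_0 > T_{\min}(\Omega,\omega)$, and use the known exact controllability of the semilinear wave equation with globally Lipschitz $f$ (the references \cite{fu2007exact, zhang2004exact, zuazua1993exact} already cited) to steer any finite-energy datum to $\overline{y}$ (and from $\overline{y}$ to any target) in time $T_0$. The crucial refinement is the \emph{cost estimate}: inside a ball of radius $r$ around $(\overline{y},0)$ in $H^1_0(\Omega)\times L^2(\Omega)$, the minimal-norm control should satisfy $\|u\|_{L^2((0,T_0)\times\omega)} \le C(T_0)\,\|\*y^0 - (\overline{y},0)\|$. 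Here I would shift variables $z := y-\overline{y}$, so that $z$ solves $\del_t^2 z - \Delta z + \big(f(z+\overline{y}) - f(\overline{y})\big) = u\one_\omega$ with the bracketed term a globally Lipschitz perturbation vanishing at $z=0$; the standard linear observability/HUM construction combined with a fixed-point argument (Zuazua's approach for globally Lipschitz nonlinearities) then gives exact controllability to/from $0$ with control cost controlled linearly by the energy of the endpoint, which is exactly the needed PDE analog of \eqref{eq: control.cost.estimate.1}--\eqref{eq: control.cost.estimate.2}.

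Next I would run the quasi-turnpike construction verbatim in this Hilbert-space setting. Concatenate: a control $u^1$ on $[0,T_0]$ driving $\*y^0$ to $(\overline{y},0)$ (which is a genuine uncontrolled steady state of the wave equation by \eqref{eq: steady.state.heat}), then $u^1 \equiv 0$ on $[T_0,T]$, so that $(y^1,\del_t y^1) \equiv (\overline{y},0)$ on $[T_0,T]$. Then $J_T(u^1)$ is bounded independently of $T$ (the running integrals vanish past $T_0$, and $\phi \ge 0$ is evaluated at a fixed point), hence $J_T(u_T) \le J_T(u^1) \le C_0$ uniformly in $T$. This immediately gives $\|y_T - \overline{y}\|_{L^2(0,T;H^1_0)}$, $\|\del_t y_T\|_{L^2(0,T;L^2)}$, and $\|u_T\|_{L^2((0,T)\times\omega)}$ all bounded by $C_0$; a Grönwall / energy estimate for the wave equation (using the Lipschitz bound on $f$ and the finite speed of propagation is not even needed, just energy identities) upgrades this to a pointwise-in-time bound $\|y_T(t)-\overline{y}\|_{H^1_0} + \|\del_t y_T(t)\|_{L^2} \le C_0$ for all $t$, the exact analog of \eqref{eq: sketch.est.1}. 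From there, Steps 2--4 of the sketch in \Cref{sec: sketch} — the mean-value pigeonhole choice of $\tau_1 \in [0,T^*), \tau_2 \in (T-T^*,T]$ with small energy, the second quasi-turnpike control $u^2$ exploiting \Cref{def: ctrl}'s PDE analog to get $\|(y_T(t)-\overline{y},\del_t y_T(t))\| \le C_1(\|\cdot\|_{\tau_1} + \|\cdot\|_{\tau_2})$ on $[\tau_1,\tau_2]$, and the bootstrap on shrinking symmetric intervals — carry over with no change, because they only use the functional structure, the controllability-with-cost assumption, and Grönwall-type energy bounds, none of which is dimension-specific. Taking $T^*$ large enough ($T^* > T_{\min}$ and $T^* > 4C_1^4$) closes the exponential estimate on $[T^*, T-T^*]$, and combining with Step 1 on $[0,T^*]\cup[T-T^*,T]$ yields \eqref{eq: turnpike.2.wave} and \eqref{eq: turnpike.1.wave}. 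Independence of $\mu$ from $\*y^0$ follows because the relevant Grönwall constants for the \emph{additive} control $u\one_\omega$ do not involve the state, exactly as noted after \Cref{thm: turnpike}.

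The main obstacle — and the only genuinely new work compared to the finite-dimensional theorem — is establishing the control cost estimate (the PDE version of \eqref{eq: control.cost.estimate.1}--\eqref{eq: control.cost.estimate.2}) with a constant that is \emph{uniform} over a neighborhood of $\overline{y}$. For the linear wave equation this is classical (HUM gives $\|u\|_{L^2} \le C\|\*y^0\|$ with $C$ depending only on $T_0,\Omega,\omega$), but after the shift $z = y-\overline{y}$ one must handle the Lipschitz perturbation $g(z) := f(z+\overline{y}) - f(\overline{y})$ uniformly; the fixed-point controllability argument à la Zuazua produces a control whose norm is bounded by the observability constant of a family of linearized equations with bounded (by $\mathrm{Lip}(f)$) zero-order potentials, and one needs that this observability constant stays bounded — which it does, precisely because $f$ is \emph{globally} Lipschitz, so the potentials lie in a fixed ball of $L^\infty$ and uniform observability for wave equations with bounded potentials is available in the geometric setting \eqref{eq: omega.def}. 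I would isolate this as a preliminary lemma (the wave-equation analog of \Cref{def: ctrl} being satisfied), cite \cite{fu2007exact, zhang2004exact, zuazua1993exact} for the controllability and \cite{zuazua2007controllability} for the cost-estimate template, and then the proof of \Cref{thm: turnpike.wave} reduces to invoking the abstract scheme. A secondary, purely technical point is checking that the energy (Grönwall) estimates for \eqref{eq: nonlinear.wave.control} indeed give the claimed pointwise bounds with constants independent of $T$ and of $\*y^0$ in the additive-control case; this is routine multiplier/energy-method bookkeeping.
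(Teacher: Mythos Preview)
Your proposal is correct and follows essentially the same route as the paper: isolate the PDE analogue of \Cref{def: ctrl} as a preliminary claim (the paper states this as \Cref{def: ctrl.wave}, with the same shift $z=y-\overline{y}$ and the same uniform-observability-for-bounded-potentials justification you give), prove the two quasi-turnpike lemmas in $X=H^1_0(\Omega)\times L^2(\Omega)$ (\Cref{lem: quasiturnpike.wave.1} and \Cref{lem: quasiturnpike.2.wave}), and then run the bootstrap of \Cref{thm: turnpike} verbatim. The only cosmetic difference is that the paper obtains the pointwise-in-time energy bound via the Duhamel formula for the first-order system and the unitarity of the wave group (\Cref{lem: C2.wave}) rather than energy multipliers, but the resulting estimate and its $T$-independence are the same.
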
	
	
	\noindent
	The proof of turnpike (see \Cref{sec: turnpike.wave.proof}) is identical to the finite-dimensional case.
	Some technical adaptations are however needed for obtaining the bounds through quasi-turnpike controls, wherein one uses the Duhamel formula for mild solutions in view of applying an integral Gr\"onwall inequality-based argument, in the spirit of the ODE setting. The assumption $f(0)=0$ is of technical nature, and is clarified in \Cref{rem: ctrl.wave}.
	
	\begin{remark}[On the choice of $J_T$]
	We note that in existing turnpike results for the wave equation, e.g. \cite{gugat2016optimal, trelat2018steady, zuazua2017large}, a slightly weaker functional is sometimes considered. For instance, in \cite{zuazua2017large} for the linear wave equation, only the $L^2(0,T;H^1_0(\Omega))$--norm of $y-\overline{y}$ is penalized, and not the $L^2((0,T)\times\Omega)$--norm of $\del_t y$, yet turnpike is shown to hold for the full state $(y, \del_t y)$. 
	This is justified by the equipartition of energy property, which states that, along a given time interval $[0,T]$, the energy concentrated on the $y$ component in $H^1_0(\Omega)$ and on the $\del_t y$ component in $L^2(\Omega)$ is comparably the same up to a compact remainder term.
	 We choose to work with a functional penalizing the full state of the system due to the specificity of our proof strategy.
	\end{remark}
	
	\noindent
	Similarly to the finite-dimensional case, when $\phi\equiv0$ in \eqref{eq: J_T.wave}, the strategy for proving \Cref{thm: turnpike.wave} can be slightly tweaked to obtain an exponential stabilization property for the optimal states. 
	
	\begin{cor}[Stabilization] \label{cor: stabilisation.wave}
	Suppose that $\phi\equiv 0$ in $J_T$ defined in \eqref{eq: J_T.wave}.
	Under the assumptions of \Cref{thm: turnpike.wave}, there exists a time $T^*>T_{\min}(\omega, \Omega)$, and constants $C>0$ and $\mu>0$, such that for any $T>T^*$, any global minimizer $u_T \in L^2((0,T)\times\omega)$ to $J_T$ defined in \eqref{eq: J_T.wave} and corresponding optimal state $y_T$ solution to \eqref{eq: nonlinear.wave.control} satisfy	
	\begin{equation*} 
	\left\|y_T(t)-\overline{y}\right\|_{H^1_0(\Omega)} + \left\|\del_t y_T(t)\right\|_{L^2(\Omega)} \leqslant Ce^{-\mu t}
	\end{equation*}
	for all $t\in [0,T]$ and
	\begin{equation*} 
	\|u_T\|_{L^2((0,T)\times\omega)} \leqslant C.
	\end{equation*}
	Moreover, $\mu>0$ is independent of $\*y^0$.
	\end{cor}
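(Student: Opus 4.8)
The plan is to mimic exactly the structure of the proof of \Cref{cor: stabilisation} in the finite-dimensional setting, which itself is a light modification of the proof of \Cref{thm: turnpike} (or \Cref{thm: turnpike.wave}). Since $\phi\equiv 0$, the functional $J_T$ in \eqref{eq: J_T.wave} reduces to a pure tracking-plus-control-cost functional with no penalization of the final state, and this is precisely the feature that allows the exponential arc near $t=T$ to be removed. The key observation, already used in the ODE case, is that one can construct a suboptimal control that drives the state exactly to the steady target $\overline{y}$ (with $\partial_t y = 0$) in a time $T_0$ slightly larger than $T_{\min}(\omega,\Omega)$, using the exact controllability of \eqref{eq: nonlinear.wave.control} recalled above, and then keeps the control equal to zero on $[T_0, T]$; since $\overline{y}$ solves \eqref{eq: steady.state.heat}, the pair $(\overline{y},0)$ is an equilibrium of \eqref{eq: nonlinear.wave.control} with $u\equiv 0$, so the state stays frozen at $(\overline{y},0)$ and contributes nothing to $J_T$ on $[T_0,T]$. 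This gives a bound $J_T(u_T)\le J_T(u^1)\le C_0$ independent of $T$, hence, by the Duhamel-formula Gr\"onwall argument used to prove \Cref{thm: turnpike.wave}, the uniform energy bound
\begin{equation*}
\|(y_T(t)-\overline{y},\partial_t y_T(t))\|_{H^1_0(\Omega)\times L^2(\Omega)} \le C_0 \qquad \text{for all } t\in[0,T],
\end{equation*}
together with $\|u_T\|_{L^2((0,T)\times\omega)}\le C_2$, which is the stated control bound.

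Next I would run the bootstrap. As in steps 2)--4) of the sketch in \Cref{sec: sketch}, using $T^*\le T/2$ and a mean-value/contradiction argument there exist $\tau_1\in[0,T^*)$ with the energy of $(y_T-\overline{y},\partial_t y_T)$ at time $\tau_1$ controlled by $C_0/\sqrt{T^*}$. The essential asymmetry compared with \Cref{thm: turnpike.wave} is that now, near $t=T$, the absence of a final cost means the optimal trajectory has no reason to be large there; concretely, on the interval $[\tau_1,T]$ the optimal control minimizes the same tracking functional with only an \emph{initial} constraint at $\tau_1$, and by comparing with the suboptimal control that steers $(y_T(\tau_1),\partial_t y_T(\tau_1))$ to $(\overline{y},0)$ in time $T_0$ and then applies zero control, one obtains, via the controllability cost estimate (the wave analogue of \Cref{def: ctrl}, which here follows from the linear observability estimate applied to $z=y-\overline{y}$ with the $\Lip$ perturbation absorbed) and Gr\"onwall,
\begin{equation*}
\|(y_T(t)-\overline{y},\partial_t y_T(t))\|_{H^1_0(\Omega)\times L^2(\Omega)} \le C_1 \|(y_T(\tau_1)-\overline{y},\partial_t y_T(\tau_1))\|_{H^1_0(\Omega)\times L^2(\Omega)} \le \frac{C_1 C_0}{\sqrt{T^*}}
\end{equation*}
for all $t\in[\tau_1,T]$, hence for all $t\in[T^*,T]$. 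Iterating this on shrinking intervals $[nT^*,T]$ (rather than on symmetric intervals $[nT^*,T-nT^*]$ as in \Cref{thm: turnpike.wave}) yields $\|(y_T(t)-\overline{y},\partial_t y_T(t))\|\le (C_1 C_0/\sqrt{T^*})^n$ on $[nT^*,T]$, and choosing $T^*$ large enough that $C_1 C_0/\sqrt{T^*}<1$ converts this geometric decay in $n$ into the exponential bound $C_1 e^{-\mu t}$, uniformly on $[0,T]$ once the already-established uniform bound on $[0,T^*]$ is used to cover the initial layer. Since $C_0,C_1$ and the controllability constants depend only on $\omega,\Omega,f$ and on $\|\*y^0\|$ through $C_0$ in a way that does not affect the exponent, $\mu>0$ is independent of $\*y^0$, as claimed.

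The main obstacle, and the only place where genuine work beyond transcription is required, is establishing the controllability cost estimate — the wave-equation analogue of \eqref{eq: control.cost.estimate.1}–\eqref{eq: control.cost.estimate.2} — uniformly in a ball around $\overline{y}$ in $H^1_0(\Omega)\times L^2(\Omega)$. For the \emph{linear} wave equation this is the standard consequence of the observability inequality under the geometric condition \eqref{eq: omega.def}; for the semilinear equation with globally Lipschitz $f$ one writes $z = y-\overline{y}$, so that $z$ solves $\partial_t^2 z - \Delta z + g(z) = u\one_\omega$ with $g(z):=f(z+\overline{y})-f(\overline{y})$ globally Lipschitz and $g(0)=0$, and then invokes the controllability-with-cost results for Lipschitz-nonlinear wave equations cited in the excerpt (\cite{fu2007exact, zhang2004exact, zuazua1993exact}), which provide exactly such a linear-type cost bound because the nonlinear term is sublinear. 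I would flag that this is the step requiring the distinction between the wave and heat cases (the heat analogue being \Cref{thm: turnpike.heat}), and then note that everything downstream — the Duhamel–Gr\"onwall estimates and the bootstrap — is word-for-word the argument already carried out for \Cref{thm: turnpike.wave} and \Cref{cor: stabilisation}, so I would simply refer to \Cref{sec: turnpike.wave.proof} and \Cref{sec: proof.stabilisation} rather than repeat it.
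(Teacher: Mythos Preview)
Your argument is correct, but it does not follow the route the paper takes. The paper's proof of \Cref{cor: stabilisation.wave} simply transcribes the proof of \Cref{cor: stabilisation} to the wave setting: one first invokes the already-established two-sided turnpike estimate of \Cref{thm: turnpike.wave}, which immediately gives the one-sided bound $\|(y_T(t)-\overline{y},\partial_t y_T(t))\|_X \le 2C_1 e^{-\mu_1 t/2}$ on $[0,T/2]$; then on $[T/2,T]$ one compares $u_T$ with the suboptimal control $u^{\aux}$ equal to $u_T$ on $[0,T/2]$ and to $0$ on $[T/2,T]$ (so that $y^{\aux}\equiv y_T(T/2)$ there), uses $J_T(u_T)\le J_T(u^{\aux})$ to bound $\int_{T/2}^T(\|u_T\|^2+\|(y_T-\overline{y},\partial_t y_T)\|_X^2)$ by $Ce^{-\mu_1 T/4}$, and finishes with \Cref{lem: C2.wave}. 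By contrast, you bypass \Cref{thm: turnpike.wave} entirely and run a \emph{one-sided} bootstrap on intervals $[nT^*,T]$, using only the forward quasi-turnpike estimate (the wave analogue of \Cref{lem: quasiturnpike.heat.1}); this is precisely the strategy the paper employs for the heat equation in \Cref{thm: turnpike.heat}, and the paper explicitly remarks (just after \Cref{cor: stabilisation}) that stabilization can be proved independently of turnpike in this way. Your route is more self-contained and avoids the symmetric bootstrap altogether; the paper's route is shorter because it reuses \Cref{thm: turnpike.wave} as a black box. One small inaccuracy in your description: you say you ``mimic exactly the structure of the proof of \Cref{cor: stabilisation}'', but what you actually wrote is the heat-equation strategy of \Cref{sec: turnpike.heat.proof}, not the two-step argument of \Cref{sec: proof.stabilisation}.
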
	
	
	\subsection{Semilinear heat equation}
	
	To complete our presentation, we will also discuss control systems of the form
	\begin{equation} \label{eq: nonlinear.heat.control}
	\begin{dcases}
	\del_t y - \Delta y + f(y) = u\one_\omega &\text{ in } (0, T) \times \Omega\\
	y = 0 &\text{ on } (0, T) \times \del \Omega \\
	y|_{t=0} = y^0 &\text{ in } \Omega,
	\end{dcases}
	\end{equation}
	were $f \in \Lip(\R)$, $\omega \subset \Omega$ is any open, non-empty subset, 
	whereas $y^0$ is a given initial datum. 
	It is well-known that for any given $T>0$, $y^0 \in L^2(\Omega)$ and $u \in L^2((0,T)\times\omega)$, there exists a unique globally-defined solution $y \in C^0([0,T]; L^2(\Omega))\cap L^2(0,T;H^1_0(\Omega))$ to \eqref{eq: nonlinear.heat.control}. 
	We will again study global minimizers $u_T \in L^2((0,T)\times \omega)$ to nonnegative functionals of the form
		\begin{equation} \label{eq: J_T.heat}
	J_T(u) := \int_0^T \|y(t)-\overline{y}\|_{L^2(\Omega)}^2 \diff t + \int_0^T \|u(t)\|_{L^2(\omega)}^2 \diff t,
	\end{equation}
    and the corresponding solution $y_T$ to \eqref{eq: nonlinear.heat.control} in the regime $T\gg1$. 
    Once again, $\overline{y} \in L^2(\Omega)$ is a running target which we select as an uncontrolled steady state, namely a solution to \eqref{eq: steady.state.heat}.
    The existence of minimizers to $J_T$ defined in \eqref{eq: J_T.heat} follows by the direct method in the calculus of variations.
    		
    \begin{theorem}[Stabilization]  \label{thm: turnpike.heat}
    Suppose that $f\in\Lip(\R)$ and $\Omega\subset\R^d$ are such that \eqref{eq: steady.state.heat} admits at least one solution, and let $\overline{y}\in H^1_0(\Omega)$ be any such solution. 
   Then for any $y^0\in L^2(\Omega)$, there exist constants $C,\mu>0$ (depending solely on $f,\omega$) such that for any $T>0$, any global minimizer $u_T\in L^2((0,T)\times \omega)$ of $J_T$ defined in \eqref{eq: J_T.heat} and corresponding optimal state $y_T$ solution to \eqref{eq: nonlinear.heat.control} satisfy
	\begin{equation*} \label{eq: turnpike.3.heat}
	\left\|y_T(t)-\overline{y}\right\|_{L^2(\Omega)}\leqslant Ce^{-\mu t}\left\|y^0-\overline{y}\right\|_{L^2(\Omega)}
	\end{equation*}
	for all $t\in [0,T]$, and
	\begin{equation*}
	\|u_T\|_{L^2((0,T)\times\omega)}\leqslant C\left\|y^0-\overline{y}\right\|_{L^2(\Omega)}.
	\end{equation*}
    \end{theorem}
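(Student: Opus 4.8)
The plan is to run the parabolic analog of the finite-dimensional argument of \Cref{sec: sketch}; since \eqref{eq: J_T.heat} has no terminal cost, the conclusion is the counterpart of the stabilization result \Cref{cor: stabilisation} (only one exponential arc, near $t=0$). As in \Cref{rem: running.target}, I would first normalize the target to $0$: with $z:=y-\overline{y}$ and using that $\overline{y}$ solves \eqref{eq: steady.state.heat}, $z$ solves
\begin{equation*}
\begin{dcases}
\del_t z - \Delta z + g(z) = u\one_\omega &\text{ in } (0,T)\times\Omega,\\
z = 0 &\text{ on } (0,T)\times\del\Omega,\\
z|_{t=0} = y^0-\overline{y} &\text{ in } \Omega,
\end{dcases}
\end{equation*}
where $g(x,s):=f\big(s+\overline{y}(x)\big)-f\big(\overline{y}(x)\big)$ is globally Lipschitz in $s$ uniformly in $x$, with $g(x,0)=0$, and $J_T$ becomes $\int_0^T\|z\|_{L^2(\Omega)}^2\diff t+\int_0^T\|u\|_{L^2(\omega)}^2\diff t$. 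For this equation the heat analog of \Cref{def: ctrl} holds \emph{globally}: by the null-controllability of semilinear heat equations with globally Lipschitz nonlinearities (Fern\'andez-Cara \& Zuazua), for every $T_0>0$ and every $L^2$ datum there is a control driving $z$ to $0$ in time $T_0$, and the global Lipschitz bound yields a datum-independent observability constant, hence a control cost linear in the $L^2$ norm of the datum — i.e. the analog of \eqref{eq: control.cost.estimate.1}--\eqref{eq: control.cost.estimate.2}, valid on all of $L^2(\Omega)$. This is precisely why the heat equation, unlike the wave equation, lies directly within our framework.

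With this in hand the proof follows the finite-dimensional scheme, the Gr\"onwall estimates being run on the Duhamel (mild-solution) formulation. \textbf{Step 1 (uniform bounds).} Let $u^1$ drive $z$ from $y^0-\overline{y}$ to $0$ in time $T_0$, extended by $0$ afterwards; since $0$ is an equilibrium, $J_T(u^1)$ is bounded independently of $T$ by a quantity $K_0^2\|y^0-\overline{y}\|_{L^2(\Omega)}^2$ (combining the cost estimate with an energy estimate on $[0,T_0]$). Minimality $J_T(u_T)\le J_T(u^1)$ gives $\|u_T\|_{L^2((0,T)\times\omega)}\le C_2$ and $\int_0^T\|y_T(t)-\overline{y}\|_{L^2(\Omega)}^2\diff t\le K_0^2\|y^0-\overline{y}\|^2$, uniformly in $T$; in particular a mean value argument gives a good time $\tau_1\in[0,T^*)$ with $\|y_T(\tau_1)-\overline{y}\|_{L^2(\Omega)}\le K_0\|y^0-\overline{y}\|/\sqrt{T^*}$. \textbf{Step 2 (restart and geometric decay).} By the dynamic programming principle (immediate from the additive form of $J_T$ and the absence of terminal cost), for each $\sigma\in[0,T-T_0]$ the tail $u_T|_{[\sigma,T]}$ is optimal for the problem on $[\sigma,T]$ with datum $y_T(\sigma)$. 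Comparing with the quasi-turnpike control steering $y_T(\sigma)$ to $\overline{y}$ in time $T_0$ and then holding it there, and using again the cost estimate and an energy estimate on $[\sigma,\sigma+T_0]$, one obtains
\begin{equation*}
\int_\sigma^T\Big(\|y_T(t)-\overline{y}\|_{L^2(\Omega)}^2+\|u_T(t)\|_{L^2(\omega)}^2\Big)\diff t\;\le\;K^2\,\|y_T(\sigma)-\overline{y}\|_{L^2(\Omega)}^2,
\end{equation*}
with $K$ depending only on $\Omega,\omega,f$. Writing $V(t):=\|y_T(t)-\overline{y}\|_{L^2(\Omega)}$, a mean value argument on each window $[\sigma,\sigma+T^*]\subset[0,T]$ yields $\sigma'\in[\sigma,\sigma+T^*)$ with $V(\sigma')^2\le \tfrac{1}{T^*}\int_\sigma^T V^2\le \tfrac{K^2}{T^*}V(\sigma)^2$. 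Fixing $T^*>\max\{T_0,K^2\}$ makes $\rho:=K/\sqrt{T^*}<1$; iterating from $\tau_1$ produces $\tau_1<\tau_2<\cdots$ with $\tau_{j+1}-\tau_j<T^*$ and $V(\tau_{j+1})\le\rho\,V(\tau_j)$, hence $V(\tau_n)\le\rho^{\,n-1}V(\tau_1)$, valid as long as $\tau_{n-1}\le T-T^*$, i.e. for $n$ of order $T/T^*$.

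\textbf{Step 3 (pointwise exponential estimate).} For arbitrary $t\in[0,T]$ take the largest admissible $n$ with $\tau_n\le t$; then $t-\tau_n<T^*$, or $t$ lies in a final window of length $\lesssim T^*$ near $T$. An energy estimate on $[\tau_n,t]$, an interval of length bounded independently of $T$, together with $\int_{\tau_n}^t\|u_T\|_{L^2(\omega)}^2\le K^2 V(\tau_n)^2$ from Step 2, gives $V(t)\le C' V(\tau_n)\le C'\rho^{\,n-1}V(\tau_1)$; since $n-1\ge \tfrac{t}{T^*}-2$, this is $\le C_1 e^{-\mu t}$ with $\mu:=\tfrac{1}{T^*}\ln(1/\rho)>0$ depending only on $\Omega,\omega,f$ and $C_1$ proportional to $\|y^0-\overline{y}\|$. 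The window near $t=T$ is handled by the same energy estimate from a time close to $T-T^*$, using $e^{-\mu T}\le e^{-\mu t}$. Combined with the bound on $\|u_T\|_{L^2((0,T)\times\omega)}$ from Step 1 this is the assertion, with $\mu$ and $C_2$ independent of $y^0$.

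The conceptual skeleton (Steps 2--3) is identical to the finite-dimensional case, so the real work — as in the passage from \Cref{sec: ode} to \Cref{sec: pde} in general — is Step 1 and, more precisely, the quasi-turnpike bounds. The delicate point there is to run Gr\"onwall on the Duhamel formula while keeping every energy-estimate constant attached to a time interval of length bounded independently of $T$ (the intervals $[0,T_0]$, $[\sigma,\sigma+T_0]$, $[\tau_n,t]$); otherwise the exponential Gr\"onwall factor would destroy the uniformity in $T$ on which the whole bootstrap rests. The single external black box, the linear-in-datum control cost estimate, is citable here but is exactly the ingredient that is only locally available for the semilinear wave equation, which is why the two equations are treated by parallel but separate arguments.
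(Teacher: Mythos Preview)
Your proposal is correct and essentially identical to the paper's proof: your Step~2 tail inequality is precisely \Cref{lem: quasiturnpike.heat.1}, the tail-optimality (dynamic programming) observation and the forward bootstrap are the same (the paper iterates on the fixed grid $n\tau$ rather than a sequence of good times $\tau_n$, a cosmetic difference), and the pointwise estimate in your Step~3 is \Cref{lem: C2}. One minor slip worth flagging: only the forward analog of \eqref{eq: control.cost.estimate.1} (null controllability \emph{to} $\overline{y}$) is globally available for the heat equation --- smoothing obstructs the analog of \eqref{eq: control.cost.estimate.2} --- but your argument, like the paper's, in fact uses only that one direction, so nothing is affected.
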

    
    \noindent
    We refer to \Cref{sec: turnpike.heat.proof} for the proof. 
       We consider the heat equation, in addition to the wave equation, because of the validity (or rather, the partial lack thereof) of the PDE analog of \Cref{def: ctrl}. 
    The heat equation is exactly controllable to controlled trajectories, namely solutions $\widehat{y}$ to \eqref{eq: nonlinear.heat.control} for given controls $\widehat{u}$. 
    Instead of an estimate such as \eqref{eq: control.cost.estimate.2}, one has 
    \begin{equation*}
    \left\|u-\widehat{u}\right\|_{L^2((0,T_0)\times\omega)}\leqslant C({T_0})\left\|y^0 - \widehat{y}(0)\right\|_{L^2(\Omega)}
    \end{equation*}
   for minimal $L^2$--norm controls $u$ steering $y$ to $\widehat{y}$ in time $T_0$ (see \cite[Lemma 8.3]{pighin2018controllability} and the references therein). 
    Such an estimate does not suffice for applying our methodology, as we clearly need to estimate the minimal $L^2$--norm control by means of the distance of the initial data to the target.
    Nonetheless, we illustrate that the stabilization result can be shown independently of the turnpike result. 
    Indeed, the proof closely follows that of \Cref{thm: turnpike}, with the exception that we only need to perform the bootstrap forward in time, whence we do not require that the system is controllable to anything else but a steady state.   
    The constants $C>0$ and $\mu>0$ are actually explicit (see \eqref{eq: heat.mu}, \eqref{eq: heat.C}) precisely due to the global validity of this estimate (i.e. \eqref{eq: cost.control.heat.1}); the factor $\left\|y^0-\overline{y}\right\|$ also appears because of this and due to the absence of a final cost $\phi$.
    
    The semilinear heat equation is a commonly used benchmark for nonlinear turnpike results, thus this example serves to compare with existing results. For instance, while we assume that the running targets are steady states, we make no smallness assumptions on the targets or on the initial data, unlike \cite{grune2021abstract, PZ2}. Furthermore, since we do not use (or thus linearize) the optimality system, we may work with solely globally Lipschitz nonlinearities, in which case the techniques of \cite{grune2021abstract, pighin2020semilinear, PZ2} do not apply. 
    
    \begin{remark}[On the nonlinearity] 
    The assumption that $f$ is globally Lipschitz in \eqref{eq: nonlinear.wave.control} and \eqref{eq: nonlinear.heat.control} could perhaps be relaxed to a locally Lipschitz $f$ (for which blow-up is avoided and controllability is ensured -- for instance, $f(y) = y^3$), under the condition that one can show a uniform $L^\infty((0,T)\times\Omega)$--estimate of $y_T$ with respect to $T>0$. 
    Arguments of this sort in the context of turnpike can be found in \cite{pighin2020semilinear} under smallness assumptions on the target.
    We refer to the end of \Cref{sec: outlook} for a discussion of a (possibly technical) impediment encountered in applying our methodology to the cubic heat equation.
    In addition to the controllability properties it entails for \eqref{eq: nonlinear.wave.control} -- \eqref{eq: nonlinear.heat.control} as blow-up is avoided,we use the Lipschitz character of $f$ in the estimates in \Cref{lem: quasiturnpike.wave.1}, \Cref{lem: quasiturnpike.2.wave} and \Cref{lem: quasiturnpike.heat.1}.
    \end{remark}
     
	\section{Preliminary results}
	\label{sec: preliminary.results}
	
	\noindent
	We begin by presenting a couple of simple but important lemmas, containing bounds of the quantity $\|y(t)-\overline{y}\|$ for both the nonlinear ODE and PDE setting, solely by means of $\|y^0-\overline{y}\|$ and the tracking terms appearing in the functional $J_T$.
	These bounds would thus imply that bounding the functional $J_T$ uniformly in $T$ would entail a bound for the desired quantity $\|y(t)-\overline{y}\|$.
	Let us begin with the ODE estimate.
		
	\begin{lemma} \label{lem: C1}
	Let $T>0$, and let $\overline{y} \in \R^d$ be as in \eqref{eq: f0}. 
	For any $u \in L^2(0,T; \R^m)$ and
	$y^0 \in \R^d$, let $y \in C^0([0, T]; \R^d)$ be the solution to \eqref{eq: nonlinear.control.system} with $y(0) = y^0$.
	Then there exist constants $C_1 = C_1(f, \overline{y})>0$ and $C_2=C_2(f)$ independent of $T$ such that
	\begin{equation*}
	\sup_{t\in[0,T]}\|y(t) -\overline{y}\|\leqslant C \left(\left\|y^0-\overline{y}\right\|+\|u\|_{L^2(0, T; \R^{m})}+\|y-\overline{y}\|_{L^2(0, T; \R^{d})} \right)
	\end{equation*} 
	holds, where
	\begin{equation*}
	C:= C_1\exp\Big(C_2 \|u\|_{L^2(0,T;\R^{m})}\Big).
	\end{equation*}
	\end{lemma}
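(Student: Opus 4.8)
The plan is to work with the integral formulation of \eqref{eq: nonlinear.control.system}, subtract the target, and run a Grönwall argument — but on a time window whose length is \emph{independent of $T$}, since a naive application of Grönwall over all of $[0,T]$ would produce a factor $e^{\Lip(f_0)T}$ and destroy the uniformity. Writing $g(t) := \|y(t)-\overline{y}\|$, integrating the dynamics from any $\tau$ to any $t$ with $0 \le \tau \le t \le T$, and using $f_0(\overline{y}) = 0$ from \eqref{eq: f0}, we get
\[
y(t) - \overline{y} = \big(y(\tau) - \overline{y}\big) + \int_\tau^t \big(f_0(y(s)) - f_0(\overline{y})\big)\diff s + \int_\tau^t \sum_{j=1}^m u_j(s)\, f_j(y(s))\diff s.
\]
Set $L_0 := \Lip(f_0)$, $M_0 := \big(\sum_{j=1}^m \|f_j(\overline{y})\|^2\big)^{1/2}$ and $M_1 := \big(\sum_{j=1}^m \Lip(f_j)^2\big)^{1/2}$; thus $M_0 = M_0(f,\overline{y})$ while $L_0, M_1$ depend on $f$ only. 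The global Lipschitz bounds give $\|f_0(y(s)) - f_0(\overline{y})\| \le L_0\, g(s)$, while the triangle and Cauchy--Schwarz inequalities (in the index $j$) together with $\|f_j(y(s))\| \le \|f_j(\overline{y})\| + \Lip(f_j)\, g(s)$ give $\big\|\sum_j u_j(s) f_j(y(s))\big\| \le \sqrt{2}\,\|u(s)\|\,\big(M_0 + M_1\, g(s)\big)$. Absorbing the $\sqrt 2$ into $M_0, M_1$, we arrive at
\[
g(t) \le g(\tau) + M_0 \int_\tau^t \|u(s)\|\diff s + \int_\tau^t \big(L_0 + M_1 \|u(s)\|\big)\, g(s)\diff s,
\]
to which the integral Grönwall inequality applies (the forcing $g(\tau) + M_0\int_\tau^{\cdot}\|u\|$ is non-decreasing and the kernel $L_0 + M_1\|u(\cdot)\|$ is in $L^1$), yielding
\[
g(t) \le \Big(g(\tau) + M_0\, \|u\|_{L^1(\tau,t;\R^m)}\Big)\, \exp\!\Big(L_0\,(t-\tau) + M_1\, \|u\|_{L^1(\tau,t;\R^m)}\Big).
\]

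It remains to choose $\tau$ well. Fix $\delta = 1$ (any $T$-independent value works). Given $t \in [0,T]$, put $t_0 := \max\{0,\, t-\delta\}$. If $t_0 = 0$ — that is, $t \le \delta$ — take $\tau = 0$, so that $g(\tau) = \|y^0 - \overline{y}\|$. If $t_0 > 0$, then $t - t_0 = \delta$, and the mean-value estimate $\tfrac{1}{\delta}\int_{t_0}^{t} g(s)\diff s \le \delta^{-1/2}\, \|y-\overline{y}\|_{L^2(0,T;\R^d)}$, combined with the continuity of $g$, lets us pick $\tau = \tau^\ast \in [t_0,t]$ with $g(\tau^\ast) \le \delta^{-1/2}\, \|y-\overline{y}\|_{L^2(0,T;\R^d)}$. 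In both cases $g(\tau) \le \|y^0 - \overline{y}\| + \delta^{-1/2}\|y-\overline{y}\|_{L^2(0,T;\R^d)}$, $t-\tau \le \delta$, and Cauchy--Schwarz gives $\|u\|_{L^1(\tau,t;\R^m)} \le \sqrt{\delta}\,\|u\|_{L^2(0,T;\R^m)}$. Plugging these into the Grönwall bound gives precisely the asserted inequality, with $C_1 := \max\{1,\, \delta^{-1/2},\, M_0\sqrt{\delta}\}\, e^{L_0 \delta}$ and $C_2 := M_1\sqrt{\delta}$ — both independent of $T$, with $C_1 = C_1(f,\overline{y})$ and $C_2 = C_2(f)$ as claimed.

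The one genuinely non-routine step is the selection of $\tau^\ast$: the value of $g$ at the left endpoint of the short window is controlled not by the (possibly large) initial datum, but by the $L^2$-in-time norm of $y-\overline{y}$ on $[0,T]$, which is exactly one of the quantities penalized by $J_T$ in \eqref{eq: J_T}. This is what prevents the constant from degenerating as $T \to \infty$; once this is in place, the remainder is standard manipulation with the Lipschitz bounds and Cauchy--Schwarz. The same scheme, with the Duhamel formula in place of the integral equation, will underlie the PDE counterparts.
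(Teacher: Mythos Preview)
Your proof is correct and follows essentially the same approach as the paper: split according to whether $t\le 1$ or $t>1$, in the latter case locate a point $\tau^\ast\in[t-1,t]$ where $\|y(\tau^\ast)-\overline{y}\|$ is dominated by $\|y-\overline{y}\|_{L^2(0,T;\R^d)}$, and then run Gr\"onwall on the short window $[\tau^\ast,t]$ so that the exponential factor depends only on $\|u\|_{L^2}$ and not on $T$. The paper obtains $\tau^\ast$ by a contradiction argument (squaring and integrating) rather than via the average, and keeps the two cases separate rather than merging them through the choice of $\tau$, but these are cosmetic differences; the key mechanism and the resulting constants are the same.
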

	
	\noindent
	As insinuated by the form of the constant in the estimate, the proof follows a Gr\"onwall inequality-based argument. However, as this constant depends on $T$ only through the $L^2$--norm of the control $u$, we present the proof for the sake of clarity.
	
	\begin{proof}[Proof of \Cref{lem: C1}]
	
	Let us first suppose that $t\in[0,1]$. By integrating the equation satisfied by $y$ on $[0,t]$, namely writing
	\begin{align*}
	y(t) - \overline{y} &= y^0 - \overline{y} + \int_{0}^t \left(f_0(y) + \sum_{j=1}^m u_j f_j(y) \right) \diff \tau \\
	&= y^0 - \overline{y} + \int_{0}^t \left(f_0(y)-f_0(\overline{y})\right) \diff \tau  + \int_{0}^t \sum_{j=1}^m u_j \Big(f_j(y)-f_j(\overline{y})\Big)\diff \tau \\
	&\quad + \int_{0}^t \sum_{j=1}^m u_j f_j(\overline{y})\diff \tau,
	\end{align*}
	we see that, by using the fact that $f_0, \ldots, f_m \in \Lip(\R^d;\R^d)$ and the Cauchy-Schwarz inequality for the sums, 
	\begin{align*}
	\|y(t)-\overline{y}\|\leqslant\left\|y^0-\overline{y}\right\| + C(f)\int_{0}^t \Big(1+\|u(\tau)\|\Big) \|y(\tau)-\overline{y}\|\diff \tau + C_1(f,\overline{y})\int_{0}^t\|u(\tau)\|\diff\tau
	\end{align*}
	holds for some constant $C_1(f,\overline{y})>0$ independent of $T$. 
	Here and henceforth, $C(f)>0$ designates the largest among the Lipschitz constants of all $f_0, \ldots, f_m$. 
	Now applying the Cauchy-Schwarz inequality for the last term, the fact that $t\leqslant1$, and the Gr\"onwall inequality, in conjunction to the estimate just above, lead us to
	\begin{equation*}
	\|y(t)-\overline{y}\|\leqslant C_2\exp\left(C(f)\sqrt{1+\int_{0}^t \|u(\tau)\|^2\diff \tau}  \right) \left(\left\|y^0-\overline{y}\right\|+\left\|u\right\|_{L^2(0,T;\R^m)}\right),
	\end{equation*}
	for some $C_2(f, \overline{y})>0$, from which, using $\sqrt{x^2+y^2}\leqslant x+y$ for $x,y>0$, the desired statement readily follows.	
	Now suppose that $t\in(1,T]$. 
	We begin by showing that for any such $t$, there exists a $t^* \in (t-1, t]$ such that
	\begin{equation} \label{eq: step1.lemC1}
	\|y(t^*)- \overline{y}\|\leqslant\|y-\overline{y}\|_{L^2(0, T; \R^{d})}.
	\end{equation}
	To this end, we argue by contradiction. 
	Suppose that 
	\begin{equation*}
	\|y(t^*)-\overline{y}\|>\|y-\overline{y}\|_{L^2(0, T; \R^{d})}
	\end{equation*}
	for all $t^*\in(t-1, t]$. 
	Then
	\begin{align*}
		\|y-\overline{y}\|_{L^2(0, T; \R^{d})}^2 = \int_0^T \|y(t)-\overline{y}\|^2 \diff t \geqslant \int_{t-1}^t \|y(\tau)-\overline{y}\|^2 \diff \tau > \|y-\overline{y}\|_{L^2(0, T; \R^{d})}^2,
	\end{align*}
	which is a contradiction. 
	Thus \eqref{eq: step1.lemC1} holds. 
	Consequently, we know that there exists $t^* \in (t-1, t]$ such that \eqref{eq: step1.lemC1} holds. 
	By integrating the equation satisfied by $y$ in $[t^*, t]$, namely writing
	\begin{align*}
	y(t) - \overline{y} &= y(t^*) - \overline{y} + \int_{t^*}^t \left(f_0(y) + \sum_{j=1}^m u_j f_j(y) \right) \diff \tau \\
	&= y(t^*) - \overline{y} + \int_{t^*}^t \left(f_0(y)-f_0(\overline{y})\right) \diff \tau  + \int_{t^*}^t \sum_{j=1}^m u_j \Big(f_j(y)-f_j(\overline{y})\Big)\diff \tau \\
	&\quad + \int_{t^*}^t \sum_{j=1}^m u_j f_j(\overline{y})\diff \tau,
	\end{align*}
	we see that, by using the Lipschitz character of $f_0,\ldots, f_m$ and the Cauchy-Schwarz inequality for the sums,
	\begin{equation*}
	\|y(t)-\overline{y}\|\leqslant\|y(t^*)-\overline{y}\| + C(f)\int_{t^*}^t \Big(1+\|u(\tau)\|\Big) \| y(\tau)-\overline{y}\|\diff \tau + C_1(f,\overline{y}) \int_{t^*}^t\|u(\tau)\|\diff \tau.
	\end{equation*}
	Now applying the Cauchy-Schwarz inequality for the last term, the fact that $t-t^*\leqslant 1$, \eqref{eq: step1.lemC1}, and the Gr\"onwall inequality, in conjunction to the estimate just above, we obtain
	\begin{equation*}
	\|y(t)-\overline{y}\|\leqslant C_2\exp\left(C_3(f)\sqrt{1+\int_{t^*}^t \|u(\tau)\|^2\diff \tau}  \right) \left(\|y-\overline{y}\|_{L^2(0,T;\R^{d})} + \left\|u\right\|_{L^2(0,T;\R^m)}\right),
	\end{equation*}
	for some $C_2(f, \overline{y})>0$ and $C_3(f)>0$, from which, using $\sqrt{x^2+y^2}\leqslant x+y$ for $x,y>0$, the desired statement readily follows.
	\end{proof}
	
	\begin{remark} Let us make two brief observations.
	\begin{itemize}
	\item
	We note that in the case where the running target is $(\overline{u}, \overline{y})$ with $f(\overline{y}, \overline{u}) = 0$ and $\overline{u}\neq 0$, and thus we minimize $J_T$ defined in \eqref{eq: J_T.first},
	we argue as above to obtain a bound of the form
	\begin{equation*}
	\sup_{t\in[0,T]}\|y(t) -\overline{y}\|\leqslant C \left(\left\|y^0-\overline{y}\right\|+\|u-\overline{u}\|_{L^2(0, T; \R^{m})}+\|y-\overline{y}\|_{L^2(0, T; \R^{d})} \right)
	\end{equation*} 
	with $C \sim \exp\left(\|u-\overline{u}\|_{L^2(0,T; \R^m)}\right)$. Obtaining a dependence of the constant $C$ with respect to $\|u-\overline{u}\|_{L^2(0,T; \R^m)}$ rather than just $\|u\|_{L^2(0,T; \R^m)}$ is important, as by using the functional and optimality arguments, we will be able to obtain a uniform bound with respect to $T$ of the former, which does not necessarily entail a bound on the latter.
	The argument for deducing such a bound is identical to the proof of \Cref{lem: C1} -- assume that $m=1$ for notational simplicity, and observe that, since $f_0(\overline{y}) + \overline{u}f_1(\overline{y}) = 0$, 
	\begin{align*}
	y(t) - \overline{y} &= y(t^*) - \overline{y} + \int_{t^*}^t \left(f_0(y) - f_0(\overline{y})\right) \diff s + \int_{t^*}^t \left(u-\overline{u}\right)\left(f_1(y)-f_1(\overline{y})\right)\diff s \\
	&\quad+ \int_{t^*}^t \left(u-\overline{u}\right) f_1(\overline{y}) \diff s + \int_{t^*}^t \overline{u}\left(f_1(y)-f_1(\overline{y})\right)\diff s.
	\end{align*}
	One may then proceed as before.
	\smallskip
	\item It may readily be seen that if the control is of additive rather than multiplicative form, i.e. if $f_1, \ldots, f_m$ are nonzero constants, then the constant appearing in the estimate provided by \Cref{lem: C1} will not depend on the time horizon $T$.
	\end{itemize}
	\end{remark}
	
	\noindent
	We state and prove an analogous result for the semilinear heat equation \eqref{eq: nonlinear.heat.control}. 
	
	\begin{lemma} \label{lem: C2}
	Let $T>0$ be given, and let $\overline{y}$ be as in \eqref{eq: steady.state.heat}. 
	For any $u \in L^2((0,T)\times\omega)$ and
	$y^0 \in L^2(\Omega)$, let $y \in C^0([0, T]; L^2(\Omega)) \cap L^2(0,T; H^1_0(\Omega))$ be the unique weak solution to \eqref{eq: nonlinear.heat.control}.
	Then there exists a constant $C=C(f)>0$ independent of $T$ such that
	\begin{equation*}
	\|y(t) -\overline{y}\|_{L^2(\Omega)} \leqslant C \left(\left\|y^0-\overline{y}\right\|_{L^2(\Omega)}+\|u\|_{L^2((0,T)\times\omega)}+\|y-\overline{y}\|_{L^2((0, T)\times\Omega)} \right)
	\end{equation*} 
	holds for all $t \in [0, T]$.
	\end{lemma}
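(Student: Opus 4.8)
The plan is to mimic the proof of Lemma 3.1 closely, replacing the ODE variation-of-constants formula by the Duhamel (mild solution) formula for the semilinear heat equation, and using the $L^2(\Omega)$-contractivity of the heat semigroup $\left(e^{t\Delta}\right)_{t\geq 0}$ in place of the trivial bound on the ODE flow. Set $z := y - \overline{y}$. Since $\overline{y}$ solves \eqref{eq: steady.state.heat}, subtracting the stationary equation from \eqref{eq: nonlinear.heat.control} shows that $z$ satisfies $\del_t z - \Delta z = -\bigl(f(y)-f(\overline{y})\bigr) + u\one_\omega$ with $z=0$ on $\del\Omega$ and $z(0) = y^0 - \overline{y}$. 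The Duhamel formula then gives, for $0\leq t^*\leq t\leq T$,
\begin{equation*}
z(t) = e^{(t-t^*)\Delta} z(t^*) + \int_{t^*}^t e^{(t-s)\Delta}\Bigl(u(s)\one_\omega - \bigl(f(y(s))-f(\overline{y})\bigr)\Bigr)\diff s.
\end{equation*}

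First I would treat $t \in [0,1]$ directly: using $\|e^{\tau\Delta}\|_{\mathcal L(L^2)}\leq 1$, the global Lipschitz bound $\|f(y(s))-f(\overline{y})\|_{L^2(\Omega)}\leq \Lip(f)\,\|z(s)\|_{L^2(\Omega)}$, and Cauchy--Schwarz in time (so that $\int_0^t \|u(s)\one_\omega\|_{L^2}\diff s \leq \sqrt{t}\,\|u\|_{L^2((0,T)\times\omega)}$ and similarly for the $\|z(s)\|_{L^2}$ term), one gets $\|z(t)\|_{L^2(\Omega)}\leq \|z(0)\|_{L^2(\Omega)} + \|u\|_{L^2((0,T)\times\omega)} + \Lip(f)\,\|z\|_{L^2((0,1)\times\Omega)}$, which is of the claimed form with a constant depending only on $f$. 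Next, for $t\in(1,T]$, I would reproduce the contradiction argument of Lemma 3.1: there must exist $t^*\in(t-1,t]$ with $\|z(t^*)\|_{L^2(\Omega)}\leq \|z\|_{L^2((0,T)\times\Omega)}$, since otherwise $\int_{t-1}^t \|z(\tau)\|_{L^2(\Omega)}^2\diff\tau > \|z\|_{L^2((0,T)\times\Omega)}^2$, a contradiction. Apply Duhamel on $[t^*,t]$ with this $t^*$; the semigroup contractivity, the Lipschitz bound on $f$, and $t-t^*\leq 1$ together with Cauchy--Schwarz yield
\begin{equation*}
\|z(t)\|_{L^2(\Omega)} \leq \|z(t^*)\|_{L^2(\Omega)} + \|u\|_{L^2((0,T)\times\omega)} + \Lip(f)\,\|z\|_{L^2((0,T)\times\Omega)},
\end{equation*}
and inserting the bound on $\|z(t^*)\|_{L^2(\Omega)}$ gives the statement. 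Note that, in contrast to the ODE case, the nonlinearity enters \emph{additively} in the heat equation, so no Grönwall step is needed and the constant $C$ is simply $1+\Lip(f)$ up to harmless numerical factors — hence the cleaner dependence $C=C(f)$ advertised in the statement.

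The only genuinely delicate point is making sure the mild-solution computations are rigorous at the level of $L^2(\Omega)$-valued functions: one must check that $z \in C^0([0,T];L^2(\Omega))$ (which follows from the stated well-posedness of \eqref{eq: nonlinear.heat.control} and of \eqref{eq: steady.state.heat}), that the Bochner integral $\int_{t^*}^t e^{(t-s)\Delta}\bigl(u(s)\one_\omega - (f(y(s))-f(\overline{y}))\bigr)\diff s$ is well-defined (the integrand is in $L^1(t^*,t;L^2(\Omega))$ because $u\in L^2$, $f$ is Lipschitz and $z\in C^0\subset L^2$), and that the Duhamel identity indeed holds for the weak/mild solution. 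These are all standard facts for parabolic equations with $L^2$-in-time source, so I do not expect any real obstacle; the argument is essentially Lemma 3.1 with $e^{t\Delta}$ substituted for the ODE solution operator. I would therefore present it in sketch form, pointing to the proof of Lemma 3.1 for the repeated details.
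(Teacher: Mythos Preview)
Your proposal is correct and follows essentially the same route as the paper: Duhamel formula for $z=y-\overline{y}$, the case split $t\in[0,1]$ versus $t\in(1,T]$, and the contradiction argument producing $t^*\in(t-1,t]$ with $\|z(t^*)\|_{L^2(\Omega)}\leq\|z\|_{L^2((0,T)\times\Omega)}$. The one difference is that the paper, after reaching
\[
\|z(t)\|_{L^2(\Omega)} \leq \|z(t^*)\|_{L^2(\Omega)} + \int_{t^*}^t \|u(s)\|_{L^2(\omega)}\diff s + C_0\int_{t^*}^t \|z(s)\|_{L^2(\Omega)}\diff s,
\]
invokes Gr\"onwall (yielding a factor $e^{C_0}$), whereas you bound the last integral directly by Cauchy--Schwarz and $t-t^*\leq 1$ as $C_0\|z\|_{L^2((0,T)\times\Omega)}$, which is already one of the admissible right-hand-side terms. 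Your shortcut is valid precisely because, unlike in the ODE lemma, the control is additive and the desired estimate already contains $\|z\|_{L^2((0,T)\times\Omega)}$; it gives the slightly sharper constant $C=1+\Lip(f)$ in place of $e^{\Lip(f)}$, but the argument is otherwise the paper's.
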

	
	\noindent
	The proof is almost identical to the ODE case, but we sketch it for the sake of clarity.
	
	\begin{proof}[Proof of \Cref{lem: C2}]
	The proof closely follows that of \Cref{lem: C1}. 
	We first note that by uniqueness, $y-\overline{y}$ can be shown (see \cite{amann1995linear}) to coincide with the unique mild solution to
	\begin{equation*}
	\begin{dcases}
	\del_t z - \Delta z + f(z+\overline{y}) - f(\overline{y}) = u\one_\omega &\text{ in } (0,T) \times \Omega \\
	z = 0 &\text{ on } (0,T) \times \del \Omega \\
	z|_{t=0} = y^0 - \overline{y} &\text{ in } \Omega
	\end{dcases}
	\end{equation*}
	 which is given by the Duhamel/variation by constants formula:
	\begin{align} \label{eq: duhamel.heat}
	y(t) - \overline{y} = e^{t\Delta}(y^0-\overline{y}) + \int_0^t e^{(t-s)\Delta} u(s)\one_\omega \diff s - \int_0^t e^{(t-s)\Delta}\Big(f(y) - f(\overline{y})\Big)\diff s,
	\end{align}
	where $\left\{e^{t\Delta}\right\}_{t\geqslant0}$ denotes the heat semigroup on $L^2(\Omega)$ generated by the Dirichlet Laplacian $-\Delta:H^2(\Omega)\cap H^1_0(\Omega)\to L^2(\Omega)$. 
	Of course, \eqref{eq: duhamel.heat} is interpreted as an identity in $L^2(\Omega)$.
	We may thus proceed and use \eqref{eq: duhamel.heat} throughout.
	First suppose that $0<t\leqslant 1$. 
	Using the well-known decay $\left\|e^{t\Delta}\right\|_{\mathscr{L}(L^2(\Omega))}\leqslant e^{-\lambda_1(\Omega)t}\leqslant 1$ of the heat semigroup, where $\lambda_1(\Omega)>0$ denotes the first eigenvalue of $-\Delta$, and the Lipschitz character of $f$, we find using \eqref{eq: duhamel.heat} that
	\begin{align*}
	\|y(t) - \overline{y}\|_{L^2(\Omega)}&\leqslant\left\|e^{t\Delta}\left(y^0-\overline{y}\right)\right\|_{L^2(\Omega)} + \int_0^t \left\|e^{(t-s)\Delta} u(s)\right\|_{L^2(\omega)}\diff s\\
	&\quad + \int_0^t \left\|e^{(t-s)\Delta}\Big(f(y(s)) - f(\overline{y})\Big)\right\|_{L^2(\Omega)}\diff s \\
	&\leqslant \left\|y^0-\overline{y}\right\|_{L^2(\Omega)} + \int_0^t \|u(s)\|_{L^2(\omega)}\diff s \\
	&\quad+ C_0 \int_0^t \|y(t)-\overline{y}\|_{L^2(\Omega)}\diff s,
	\end{align*}
	where $C_0 = C_0(f)>0$ is the Lipschitz constant of $f$.
	As $t\leqslant 1$, we may use the Cauchy-Schwarz and Gr\"onwall inequalities to conclude.
	Now suppose that $t\in(1, T]$. 
	Arguing as in the proof of \Cref{lem: C1}, we know that there exists a $t^* \in (t-1, t]$ such that
	\begin{equation} \label{eq: step1.C2}
	\|y(t^*)-\overline{y}\|_{L^2(\Omega)}\leqslant\|y-\overline{y}\|_{L^2((0,T)\times\Omega)}
	\end{equation}
	holds. By writing the Duhamel formula for $y-\overline{y}$ in $[t^*, t]$, namely writing
	\begin{align*}
	y(t) - \overline{y} = e^{t\Delta}\left(y(t^*)-\overline{y}\right) + \int_{t^*}^t e^{(t-s)\Delta} u(s)\*1_\omega \diff s- \int_{t^*}^t e^{(t-s)\Delta}\Big(f(y)-f(\overline{y})\Big)\diff s
	\end{align*}
	we see just as before that
	\begin{align*}
	\|y(t) - \overline{y}\|_{L^2(\Omega)} \leqslant \left\|y(t^*)-\overline{y}\right\|_{L^2(\Omega)}+\int_0^t \|u(s)\|_{L^2(\omega)}\diff s+C_0 \int_{t^*}^t \|y(t)-\overline{y}\|_{L^2(\Omega)}\diff s
	\end{align*}
	where $C_0 = C_0(f)>0$ is the Lipschitz constant of $f$.
	Using the fact that $t^*-t\leqslant 1$ and \eqref{eq: step1.C2}, we may, as before, apply the Cauchy-Schwarz and Gr\"onwall inequalities to conclude. 
	\end{proof}
	
	\noindent
	We finally show the analog estimate for the semilinear wave equation, which is, after defining the proper functional setup, identical to the proof of \Cref{lem: C2}.
	
	\begin{lemma} \label{lem: C2.wave} 
	Let $T>0$ be given, and let $\overline{y}$ be as in \eqref{eq: steady.state.heat}. 
	For any $u \in L^2((0,T)\times\omega)$ and
	$\*y^0 = (y^0_1, y^0_2) \in H^1_0(\Omega) \times L^2(\Omega)$, let $y \in C^0([0, T]; H^1_0(\Omega)) \cap C^1([0,T]; L^2(\Omega))$ be the unique weak solution to \eqref{eq: nonlinear.wave.control}.
	Then there exists a constant $C=C(f, \Omega)>0$ independent of $T$ such that
	\begin{align*}
	\|y(t) -\overline{y}\|_{H^1_0(\Omega)} + \|\del_t y(t)\|_{L^2(\Omega)}&\leqslant C\Big(\left\|y^0_1-\overline{y}\right\|_{H^1_0(\Omega)} + \left\|y^0_2\right\|_{L^2(\Omega)}+\|u\|_{L^2((0,T)\times\omega)}\\
	&\hspace{1cm}+\|y-\overline{y}\|_{L^2(0, T; H^1_0(\Omega))} + \left\|\del_t y\right\|_{L^2((0,T)\times\Omega)} \Big)
	\end{align*} 
	holds for all $t \in [0, T]$.
	\end{lemma}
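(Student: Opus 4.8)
The plan is to mirror the proof of \Cref{lem: C2} almost verbatim, replacing the heat semigroup $\{e^{t\Delta}\}_{t>0}$ acting on $L^2(\Omega)$ by the wave group $\{S(t)\}_{t\in\R}$ acting on the energy space $\mathcal{H}:=H^1_0(\Omega)\times L^2(\Omega)$, and replacing the contractivity bound $\|e^{t\Delta}\|\leq 1$ by the (non-contractive, but uniformly bounded) estimate $\|S(t)\|_{\mathcal{L}(\mathcal{H})}\leq 1$, which holds here because the linear part $\del_t^2 y-\Delta y$ is conservative in $\mathcal{H}$. First I would rewrite the problem for $z:=y-\overline{y}$: since $\overline{y}$ solves \eqref{eq: steady.state.heat}, $z$ solves the semilinear wave equation with source $u\one_\omega$ and nonlinearity $g(z):=f(z+\overline{y})-f(\overline{y})$, which is globally Lipschitz with the same constant $C_0=C_0(f)$ as $f$, with initial data $\*z^0=(y^0_1-\overline{y},\,y^0_2)\in\mathcal H$; by well-posedness this $z$ coincides with the mild solution given by the Duhamel formula
\begin{equation*}
\begin{pmatrix} z(t) \\ \del_t z(t)\end{pmatrix} = S(t)\begin{pmatrix} y^0_1-\overline{y} \\ y^0_2 \end{pmatrix} + \int_0^t S(t-s)\begin{pmatrix} 0 \\ u(s)\one_\omega - g(z(s)) \end{pmatrix}\diff s,
\end{equation*}
interpreted as an identity in $\mathcal H$.

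Next, exactly as in \Cref{lem: C1} and \Cref{lem: C2}, I would split into the two regimes $t\in(0,1]$ and $t\in(1,T]$. For $t\leq 1$: take the $\mathcal H$-norm of the Duhamel identity, use $\|S(\cdot)\|_{\mathcal L(\mathcal H)}\leq 1$, use that $g$ is globally Lipschitz with $g(0)=0$ so $\|g(z(s))\|_{L^2(\Omega)}\leq C_0\|z(s)\|_{L^2(\Omega)}\leq C_0\|z(s)\|_{H^1_0(\Omega)}$, and use Cauchy--Schwarz in $s$ (legitimate since $t\leq 1$) to bound $\int_0^t\|u(s)\|_{L^2(\omega)}\diff s\leq \|u\|_{L^2((0,T)\times\omega)}$; then Gr\"onwall in $t$ on $[0,1]$ closes this case with a constant depending only on $C_0$ and hence on $f$ (and $\Omega$ through the first-eigenvalue/embedding constants). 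For $t\in(1,T]$: the mean-value/pigeonhole argument used in the earlier lemmas gives a $t^*\in(t-1,t]$ with $\|z(t^*)\|_{L^2(\Omega)}\leq\|z\|_{L^2((0,T)\times\Omega)}$ — but here I need control of the \emph{full} $\mathcal H$-norm of the state at $t^*$, not merely its $L^2$ part, so instead I apply the pigeonhole to the full energy: there is $t^*\in(t-1,t]$ with
\begin{equation*}
\|z(t^*)\|_{H^1_0(\Omega)}^2+\|\del_t z(t^*)\|_{L^2(\Omega)}^2 \leq \|z\|_{H^1_0((0,T)\times\Omega)}^2+\|\del_t z\|_{L^2((0,T)\times\Omega)}^2,
\end{equation*}
which follows by the same contradiction argument applied to the function $\tau\mapsto\|z(\tau)\|_{H^1_0}^2+\|\del_t z(\tau)\|_{L^2}^2$ over the unit window $(t-1,t]$. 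Then I write the Duhamel formula on $[t^*,t]$, take $\mathcal H$-norms, use $t-t^*\leq 1$, Cauchy--Schwarz, the Lipschitz bound on $g$, the above bound on the state at $t^*$, and Gr\"onwall on $[t^*,t]$, giving the claimed estimate with a constant independent of $T$.

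The one genuinely new point compared to \Cref{lem: C2} — and the step I expect to require the most care — is that the wave group is not contractive and, more importantly, that the tracking term in $J_T$ (and hence the right-hand side quantities we want to bound by) involves $\|\del_t y\|_{L^2((0,T)\times\Omega)}$, so the pigeonhole selection of $t^*$ must be done with respect to the \emph{full energy} $\|z(\cdot)\|_{H^1_0}^2+\|\del_t z(\cdot)\|_{L^2}^2$ rather than just an $L^2$ norm; this is the place where using a functional that penalizes the entire state $(y,\del_t y)$ (as remarked after \eqref{eq: J_T.wave}) is what makes the argument go through. A minor technical nuisance is that elements of $\mathcal H$ that are values of mild solutions need the Duhamel representation to hold in $\mathcal H$ (rather than in a weaker space), which is standard for $u\in L^2$ and $g$ globally Lipschitz; once this is in place, no estimate in the proof sees the nonlinearity beyond its Lipschitz constant, so the argument is otherwise identical to the heat case. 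Collecting the $t\leq 1$ and $t>1$ bounds, translating back from $z$ to $y-\overline{y}$, and absorbing the $\sqrt{a^2+b^2}\leq a+b$ type inequalities yields the stated estimate with $C=C(f,\Omega)>0$ independent of $T$.
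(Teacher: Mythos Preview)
Your proposal is correct and follows essentially the same approach as the paper: the paper sets up the first-order system on $X=H^1_0(\Omega)\times L^2(\Omega)$, writes the Duhamel formula for $\*y-\overline{\*y}$, invokes the conservative character $\|e^{tA}g\|_X=\|g\|_X$ of the wave group, and then says one may apply \emph{precisely the same arguments} as in the heat case (with an intermediate Poincar\'e inequality). You have correctly fleshed out the one point the paper leaves implicit, namely that the pigeonhole selection of $t^*$ must be carried out with respect to the full energy $\|z(\cdot)\|_{H^1_0}^2+\|\del_t z(\cdot)\|_{L^2}^2$ rather than the $L^2$-norm alone; a minor quibble is that the wave group is in fact an isometry group (hence contractive with norm exactly $1$), not merely ``non-contractive but uniformly bounded'' as you write, though your operator-norm bound $\|S(t)\|\leq 1$ is the correct one and is all that is used.
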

	
	\begin{proof}[Proof of \Cref{lem: C2.wave}]
	Once \eqref{eq: nonlinear.wave.control} is written as a first order evolution equation in an appropriate Hilbert space $X$, the proof is identical to that of \Cref{lem: C2}.
	Define the energy space $X := H^1_0(\Omega) \times L^2(\Omega)$, and consider the closed, densely-defined operator 
	\begin{equation*}
	A:= \begin{bmatrix} 0 & \text{Id} \\ \Delta & 0 \end{bmatrix}, \hspace{1cm} D(A) = D(\Delta)\times H^1_0(\Omega),
	\end{equation*}
	where $D(\Delta) = H^2(\Omega)\cap H^1_0(\Omega)$.
	The operator $A$ is skew-adjoint and thus generates a strongly continuous semigroup $\left\{e^{tA}\right\}_{t\geqslant0}$ in $X$ by virtue of the Stone-Lumer-Phillips theorem (see e.g. \cite[Theorem 3.8.6]{tucsnak2009observation}). We now denote 
	\begin{equation*}
	\*y:= \begin{bmatrix} y \\ \del_t y \end{bmatrix}, \hspace{1cm} \overline{\*y} := \begin{bmatrix} \overline{y} \\ 0 \end{bmatrix}.
	\end{equation*}
	Analog arguments to those in \Cref{lem: C2} lead us to deduce that
	\begin{equation} \label{eq: duhamel.wave}
	\*y(t) - \overline{\*y} = e^{tA}\left(\*y^0 - \overline{\*y}\right) + \int_0^t e^{(t-s)A}\begin{bmatrix} 0 \\u(s)\one_\omega-f(y(s))+f(\overline{y})\end{bmatrix}\diff s
	\end{equation}
	for $t>0$ is the unique mild solution to the equation satisfied by the perturbation $\*y - \overline{\*y}$. Of course, \eqref{eq: duhamel.wave} is interpreted as an identity in $X$.
	By virtue of the conservative character of the semigroup, namely $\left\|e^{tA}g\right\|_X=\left\|g\right\|_X$ for all $t\geqslant0$ and $g\in X$, we see that one may apply precisely the same arguments as in the proof of \Cref{lem: C2}, this time to the integral formulation \eqref{eq: duhamel.wave} in $X$ (with an intermediate application of the Poincaré inequality after using the Lipschitz character of $f$) to conclude.  
	\end{proof}
		
	\section{Proof of \Cref{thm: turnpike}}
	\label{sec: proof.fin.dim}

	\noindent
	In this section, we present the proof of \Cref{thm: turnpike}, \Cref{cor: stabilisation} and \Cref{cor: control.decay}.
	The proof of \Cref{thm: turnpike} requires a couple of preliminary results. In particular, we will, by means of a quasi-turnpike control strategy, provide bounds -- uniform with respect to the time horizon $T$--  of the tracking terms appearing in the definition \eqref{eq: J_T} of the functional $J_T$ for the optimal control-state pairs $(u_T, y_T)$.

	\subsection{Quasi-turnpike lemmas}

	Both of the following results are heavily based on the specific choice of target $\overline{y}$ as a steady state of the nonlinear system without control, and on the Lipschitz character of the nonlinear terms.
	We begin with the following lemma.
	
	\begin{lemma} \label{lem: quasiturnpike.1} 
	Let $y^0 \in \R^d$ be given, and assume that system \eqref{eq: nonlinear.control.system} is controllable in some time $T_0>0$.
	Let $T>0$ be fixed, and let $u_T \in L^2(0,T; \R^m)$ be a 
	global minimizer of $J_T$ defined in \eqref{eq: J_T}, with $y_T$ denoting the associated solution to \eqref{eq: nonlinear.control.system} with $y_T(0)=y^0$.
	Then, there exists a constant $C = C(f, \phi, T_0,\overline{y}, y^0)>0$ independent of $T>0$ such that 
	\begin{equation} \label{eq: quasiturnpike.1}
	\left\|u_T\right\|_{L^2(0, T; \R^{m})} + \left\| y_T-\overline{y}\right\|_{L^2(0,T; \R^{d})} + \left\|y_T(t)-\overline{y}\right\| \leqslant C
	\end{equation}
	holds for all $t \in [0, T]$. 
	\end{lemma}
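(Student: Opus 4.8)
The plan is to exploit optimality of $u_T$ by comparing $J_T(u_T)$ against the value of $J_T$ at a cleverly chosen suboptimal \emph{quasi-turnpike} control $u^1$, and then to feed the resulting uniform bound into \Cref{lem: C1}. First I would use the controllability hypothesis (\Cref{def: ctrl}) in time $T_0$: for the given initial datum $y^0$ and the target $\overline{y}$, pick a control $v \in L^2(0,T_0;\R^m)$ steering the solution of \eqref{eq: nonlinear.control.system} from $y^0$ at time $0$ to $\overline{y}$ at time $T_0$. (If $\|y^0-\overline{y}\|\le r$ one may even invoke \eqref{eq: control.cost.estimate.1} to bound $\|v\|$; in general plain controllability suffices since $y^0$ is fixed and so $\|v\|_{L^2(0,T_0;\R^m)}$ is just a fixed constant.) Define the suboptimal control on $[0,T]$ by $u^1 := v$ on $[0,T_0]$ and $u^1 := 0$ on $(T_0,T]$. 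Because $f_0(\overline{y})=0$ (assumption \eqref{eq: f0}) and the control vanishes past $T_0$, the associated state $y^1$ satisfies $y^1(t)=\overline{y}$ for all $t\in[T_0,T]$, by uniqueness of solutions to the ODE. Hence
\begin{equation*}
J_T(u^1) = \phi(\overline{y}) + \int_0^{T_0}\|y^1(t)-\overline{y}\|^2\diff t + \int_0^{T_0}\|v(t)\|^2\diff t =: C_\star,
\end{equation*}
a quantity depending only on $f,\phi,T_0,\overline{y},y^0$ but \emph{not} on $T$, since all integrands vanish on $(T_0,T]$.

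Next, by global optimality of $u_T$ we have $J_T(u_T)\le J_T(u^1)=C_\star$. Since $\phi\ge 0$, this immediately yields the two bounds
\begin{equation*}
\|u_T\|_{L^2(0,T;\R^m)}^2 \le C_\star, \qquad \|y_T-\overline{y}\|_{L^2(0,T;\R^d)}^2 \le C_\star,
\end{equation*}
which are uniform in $T$. This disposes of the first two terms in \eqref{eq: quasiturnpike.1}. For the pointwise term $\|y_T(t)-\overline{y}\|$, I would apply \Cref{lem: C1} to the optimal pair $(u_T,y_T)$: it gives, for every $t\in[0,T]$,
\begin{equation*}
\|y_T(t)-\overline{y}\| \le C_1 \exp\!\big(C_2\|u_T\|_{L^2(0,T;\R^m)}\big)\Big(\|y^0-\overline{y}\| + \|u_T\|_{L^2(0,T;\R^m)} + \|y_T-\overline{y}\|_{L^2(0,T;\R^d)}\Big).
\end{equation*}
Substituting the just-obtained uniform bounds $\|u_T\|_{L^2}\le \sqrt{C_\star}$ and $\|y_T-\overline{y}\|_{L^2}\le\sqrt{C_\star}$ produces a bound on the right-hand side that depends only on $f,\overline{y},y^0,C_\star$ — hence only on $f,\phi,T_0,\overline{y},y^0$ — and is independent of $T$ and of $t$. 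Taking $C$ to be the maximum of the three resulting constants gives \eqref{eq: quasiturnpike.1}.

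The only genuinely delicate point is making sure the comparison control $u^1$ produces a state that is \emph{exactly} $\overline{y}$ on $[T_0,T]$, so that $J_T(u^1)$ is truly $T$-independent; this is where the hypothesis $f_0(\overline{y})=0$ together with the control-affine structure \eqref{eq: nonlinearity.control.affine} is essential — with zero control the dynamics reduce to $\dot y = f_0(y)$, for which $\overline{y}$ is an equilibrium, so the constant trajectory is the unique solution through $\overline{y}$. Everything else — the controllability-based construction, the optimality comparison, and the Grönwall-type estimate — is routine given \Cref{lem: C1}. One should also note that the constant genuinely depends on $y^0$ through the (fixed) control cost of reaching $\overline{y}$ from $y^0$ and through the exponential Grönwall factor; as remarked in the paper, this dependence is the price of the multiplicative control and can be removed in the additive case.
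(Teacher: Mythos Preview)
Your argument is correct and coincides with the paper's proof for the regime $T\geq T_0$: the same quasi-turnpike comparison control, the same use of $f_0(\overline{y})=0$ to freeze the state at $\overline{y}$ on $[T_0,T]$, and the same appeal to \Cref{lem: C1} for the pointwise bound.

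There is, however, a small gap. Your construction of $u^1$ tacitly assumes $T\geq T_0$, since you place $v$ on $[0,T_0]$ and then extend by zero on $(T_0,T]$; when $T<T_0$ this definition is not available and the conclusion that $J_T(u^1)$ is $T$-independent does not follow as written. The paper treats this case separately: for $T\leq T_0$ one compares $J_T(u_T)\leq J_T(u_{T_0+1})$, where $u_{T_0+1}$ is the optimal control for horizon $T_0+1$ (restricted to $[0,T]$). The right-hand side is then bounded using the already-established case $T_0+1>T_0$, the inclusion $[0,T]\subset[0,T_0+1]$, and the continuity of $\phi$ together with the uniform bound on $y_{T_0+1}$ over $[0,T_0+1]$. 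Alternatively, you could simply take $u^1:=v|_{[0,T]}$ and bound $\phi(y^1(T))$ by $\max_{s\in[0,T_0]}\phi(y^1(s))$, which is a fixed finite number; either patch closes the gap with no new ideas.
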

	
	\begin{figure}[h] 
	\center
	\includegraphics[scale=0.58]{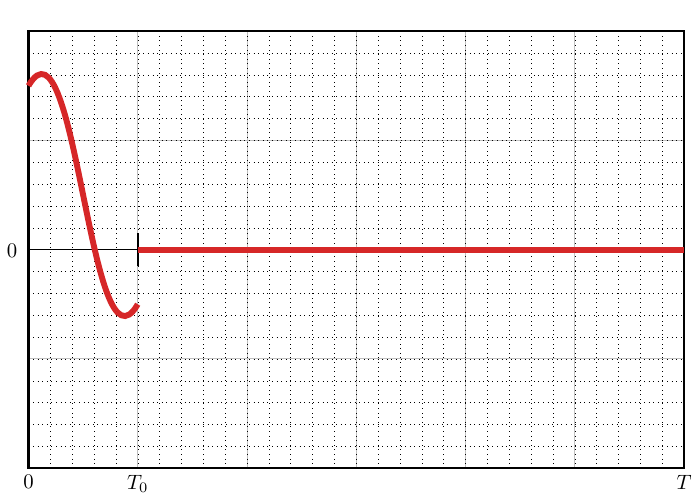}
	\includegraphics[scale=0.58]{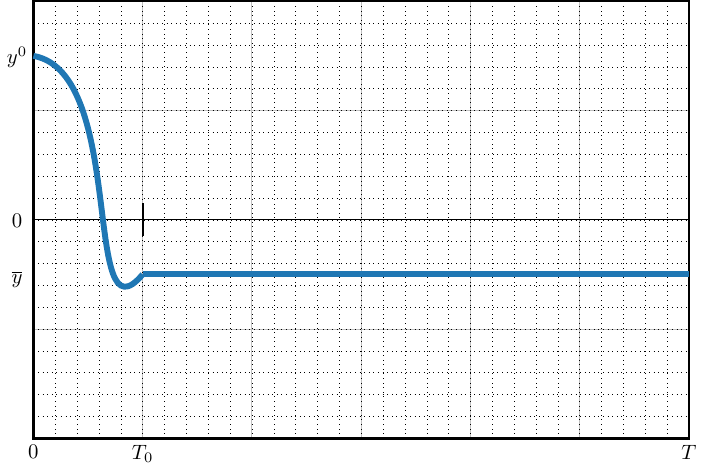}
        \caption{\textbf{Proof of \Cref{lem: quasiturnpike.1}.} The first two terms appearing in \eqref{eq: quasiturnpike.1} also appear in the functional $J_T(u_T)$. 
       	We construct a quasi-turnpike control $u^\aux$ (red), for which the corresponding state $y^\aux$ (blue) coincides with $\overline{y}$ over $(T_0, T)$.
	In this way, as $J_T(u_T) \leqslant J_T(u^\aux)$, and $J_T(u^\aux)$ is independent of $T$, we can conclude. The estimate of the third term then follows from \Cref{lem: C1}.
        } \label{fig: fig.2}
        \end{figure}
        
        \begin{proof}[Proof of \Cref{lem: quasiturnpike.1}] 
        
        \smallskip
        \noindent \textit{Case 1).}
        We begin by considering the case $T\geqslant T_0$. Using the controllability assumption, we know that there exists a control $u^{\dagger} \in L^2(0, T_0; \R^{m})$ such that the corresponding solution $y^{\dagger}$ to
	\begin{equation*}
	\begin{dcases}
	\dot{y}^\dagger = f\left(y^{\dagger}, u^{\dagger}\right) &\text{ in } (0, T_0) \\
	y^{\dagger}(0) = y^0
	\end{dcases}
	\end{equation*}
	satisfies $y^{\dagger}(T_0) = \overline{y}$.
	Now set
	\begin{equation*}
	u^\aux(t):= 
	\begin{dcases}
	u^{\dagger}(t) &\text{ in } (0, T_0) \\
	0 &\text{ in } (T_0, T)
	\end{dcases}
	\end{equation*}
	and let $y^\aux$ be the corresponding solution to \eqref{eq: nonlinear.control.system} with $y^\aux(0)=y^0$.
	Clearly $y^\aux(t) = \overline{y}$ for $t \in [T_0, T]$. 
	Hence, using $\phi\geqslant 0$ and $J_T(u_T) \leqslant J_T(u^\aux)$, we see that
	\begin{align*}
	\|y_T-\overline{y}\|_{L^2(0,T; \R^d)}^2 + \|u_T\|_{L^2(0, T;\R^m)}^2 \leqslant \phi(\overline{y}) + \left\|y^\dagger-\overline{y}\right\|_{L^2(0,T_0; \R^d)}^2 + \left\|u^\dagger\right\|_{L^2(0, T_0; \R^m)}^2.
	\end{align*}
	As the right-hand side in the above inequality is clearly independent of $T$, and depends solely on the $L^2(0,T_0)$ cost of controlling from $y^0$ to $\overline{y}$ in time $T_0$, we conclude the proof by applying \Cref{lem: C1} after noting the uniform boundedness of $\|u_T\|_{L^2(0,T;\R^{m})}$ with respect to $T>0$.  
	\smallskip
	
	\noindent
	\textit{Case 2).} Now suppose that $T\leqslant T_0$. 
        In this case, we use $\phi\geqslant 0$ and the optimality inequality $J_T(u_T)\leqslant J_T(u_{T_0})$ with the effect of obtaining
        \begin{align*}
	\left\|y_T-\overline{y}\right\|_{L^2(0,T;\R^d)}^2 &+ \left\|u_T\right\|_{L^2(0,T; \R^m)}^2 \leqslant\phi\left(y_{T_0}(T)\right) + \left\|y_{T_0}-\overline{y}\right\|_{L^2(0,T; \R^d)}^2 + \left\|u_{T_0}\right\|^2_{L^2(0,T; \R^m)}
        \end{align*}
        Now $y_{T_0}\in C^0([0,T_0]; \R^d)$ is uniformly bounded with respect to $T\in[0,T_0]$, whence, using the continuity of $\phi$, as well as $T\leqslant T_0$, we may conclude that 
        \begin{equation*}
        \left\|y_T-\overline{y}\right\|_{L^2(0,T; \R^d)}^2+ \left\|u_T\right\|_{L^2(0,T; \R^m)}^2 \leqslant C
        \end{equation*}
 	for some $C>0$ independent of $T$.
       	We may use \Cref{lem: C1} to conclude.     
        \end{proof}

	\noindent
	We will now focus on an auxiliary control problem with \emph{fixed} endpoints. 
	Namely, given $0\leqslant\tau_1<\tau_2 \leqslant T$, and $y^{\tau_1}, y^{\tau_2}\in\R^d$, this problem consists in minimizing the nonnegative functional
	\begin{equation} \label{eq: J_T-aux}
	J_{\tau_1, \tau_2}(u) := \int_{\tau_1}^{\tau_2} \|y(t)-\overline{y}\|^2\diff t + \int_{\tau_1}^{\tau_2} \|u(t)\|^2 \diff t
	\end{equation}
	over all $u \in U_\ad$, where $y \in C^0([\tau_1,\tau_2]; \R^d)$ denotes the unique solution to
	\begin{equation} \label{eq: system.tau1.tau2}
	\begin{dcases}
	\dot{y} = f(y, u) &\text{ in }(\tau_1, \tau_2) \\
	y(\tau_1)=y^{\tau_1}
	\end{dcases}
	\end{equation}
	and
	\begin{equation*}
	U_\ad := \Big\{ u \in L^2(\tau_1, \tau_2; \R^m) \colon y(\tau_2) = y^{\tau_2}\Big\}.
	\end{equation*} 
	The following lemma is of key importance in what follows. 
	It ensures that the optimal controls (for $J_{\tau_1, \tau_2}$) and trajectories are in fact bounded by means of the distance of the starting point $y^{\tau_1}$ and endpoint $y^{\tau_2}$ from the running target $\overline{y}$. This estimate will be the cornerstone of the bootstrap argument performed in the proof of \Cref{thm: turnpike}.
	
	\begin{lemma} \label{lem: unif time estimates} 
	Let $\overline{y} \in \R^d$ be as in \eqref{eq: f0}, and assume that system \eqref{eq: nonlinear.control.system} is controllable in some time $T_0>0$ in the sense of \Cref{def: ctrl}. Let $r>0$ be the radius provided by \Cref{def: ctrl}, let $0 \leqslant \tau_1 < \tau_2 \leqslant T$ be fixed such that $\tau_2-\tau_1\geqslant 2T_0$, and let $y^{\tau_1}, y^{\tau_2} \in \R^d$ be such that 
	\begin{equation*}
	\left\|y^{\tau_i}-\overline{y}\right\|\leqslant r
	\end{equation*}
	for $i=1,2$. Suppose $u_T \in U_\ad$ is a global minimizer to $J_{\tau_1, \tau_2}$ defined in \eqref{eq: J_T-aux}, with $y_T$ denoting the associated solution to \eqref{eq: system.tau1.tau2} with $y_T(\tau_2)=y^{\tau_2}$.
	Then, there exists a constant $C = C(f, T_0,\overline{y}, r)>0$ independent of $T, \tau_1, \tau_2>0$ such that 
	\begin{equation*}
	\left\|u_T\right\|_{L^2(\tau_1, \tau_2; \R^{m})}^2 + \left\|y_T-\overline{y}\right\|_{L^2(\tau_1,\tau_2; \R^{d})}^2 + \left\|y_T(t)-\overline{y}\right\|^2 \leqslant C \left(\left\|y^{\tau_1}-\overline{y}\right\|^2+\left\|y^{\tau_2}-\overline{y}\right\|^2\right)
	\end{equation*}
	holds for all $t \in [\tau_1, \tau_2]$. Moreover, the map $r \longmapsto C(f, T_0, \overline{y}, r)$ is non-decreasing as a function from $\R_+$ to $\R_+$.
	\end{lemma}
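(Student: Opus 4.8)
The plan is to follow step~3) of the sketch in \Cref{sec: sketch}: we exhibit an explicit admissible competitor $u^{\aux}\in U_\ad$ for the fixed-endpoint problem \eqref{eq: J_T-aux}, bound $J_{\tau_1,\tau_2}(u^{\aux})$ from above by $C\big(\|y^{\tau_1}-\overline{y}\|^2+\|y^{\tau_2}-\overline{y}\|^2\big)$, and then convert the optimality inequality $J_{\tau_1,\tau_2}(u_T)\le J_{\tau_1,\tau_2}(u^{\aux})$ into the three asserted bounds by an integral Gr\"onwall argument as in \Cref{lem: C1}. Throughout we use the autonomy of \eqref{eq: nonlinear.control.system}, so that \Cref{def: ctrl}, the cost estimates \eqref{eq: control.cost.estimate.1}--\eqref{eq: control.cost.estimate.2}, and \Cref{lem: C1} apply verbatim on time-translated intervals (replace $y(s)$ by $\widetilde y(s)=y(\tau_1+s)$).

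Since $\tau_2-\tau_1\ge 2T_0$, split $[\tau_1,\tau_2]=[\tau_1,\tau_1+T_0]\cup[\tau_1+T_0,\tau_2-T_0]\cup[\tau_2-T_0,\tau_2]$. Because $\|y^{\tau_1}-\overline{y}\|\le r$, estimate \eqref{eq: control.cost.estimate.1} supplies a control on $[\tau_1,\tau_1+T_0]$ steering $y^{\tau_1}$ to $\overline{y}$ with $L^2$-norm at most $2C(T_0)\|y^{\tau_1}-\overline{y}\|$ (the factor $2$ guards against the infimum not being attained). On $[\tau_1+T_0,\tau_2-T_0]$ we take the control to be $0$; by \eqref{eq: f0} the state then rests at $\overline{y}$, so this middle phase contributes nothing to $J_{\tau_1,\tau_2}$. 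On $[\tau_2-T_0,\tau_2]$, estimate \eqref{eq: control.cost.estimate.2} supplies a control steering $\overline{y}$ to $y^{\tau_2}$ with $L^2$-norm at most $2C(T_0)\|y^{\tau_2}-\overline{y}\|$. Concatenating yields $u^{\aux}\in U_\ad$ with $\|u^{\aux}\|_{L^2(\tau_1,\tau_2;\R^m)}^2\le 4C(T_0)^2\big(\|y^{\tau_1}-\overline{y}\|^2+\|y^{\tau_2}-\overline{y}\|^2\big)$. For the state-tracking part of $J_{\tau_1,\tau_2}(u^{\aux})$ only the two outer windows (each of the fixed length $T_0$) matter, and on each of them the Gr\"onwall argument in the proof of \Cref{lem: C1}, run directly on a window of length $T_0$ rather than on unit intervals, gives $\sup_{[\tau_1,\tau_1+T_0]}\|y^{\aux}(t)-\overline{y}\|\le K\big(\|y^{\tau_1}-\overline{y}\|+\|u^{\aux}\|_{L^2(\tau_1,\tau_1+T_0)}\big)$, where the Gr\"onwall exponential $K=K(f,T_0,r)$ is now a genuine constant because $\|u^{\aux}\|_{L^2}\le 2\sqrt2\,C(T_0)\,r$. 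Squaring, integrating over intervals of length $T_0$, using $\|u^{\aux}\|_{L^2(\tau_1,\tau_1+T_0)}\le 2C(T_0)\|y^{\tau_1}-\overline{y}\|$, adding the symmetric estimate on $[\tau_2-T_0,\tau_2]$ and the vanishing middle contribution, we obtain $J_{\tau_1,\tau_2}(u^{\aux})\le C(f,T_0,\overline{y},r)\big(\|y^{\tau_1}-\overline{y}\|^2+\|y^{\tau_2}-\overline{y}\|^2\big)$.

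By optimality, $\|u_T\|_{L^2(\tau_1,\tau_2;\R^m)}^2+\|y_T-\overline{y}\|_{L^2(\tau_1,\tau_2;\R^d)}^2\le J_{\tau_1,\tau_2}(u_T)\le J_{\tau_1,\tau_2}(u^{\aux})$, which is the claimed bound for the first two left-hand terms; in particular $\|u_T\|_{L^2(\tau_1,\tau_2;\R^m)}\le C(f,T_0,\overline{y},r)\,r$. Feeding this $L^2$-bound on $u_T$ and the one on $y_T-\overline{y}$ into the time-translated \Cref{lem: C1} applied to $y_T$ on $[\tau_1,\tau_2]$ (with $y^0$ there playing the role of $y^{\tau_1}$; the exponential factor $\exp(C_2\|u_T\|_{L^2})$ is again a constant depending only on $(f,T_0,\overline{y},r)$) yields $\|y_T(t)-\overline{y}\|^2\le C(f,T_0,\overline{y},r)\big(\|y^{\tau_1}-\overline{y}\|^2+\|y^{\tau_2}-\overline{y}\|^2\big)$ for every $t\in[\tau_1,\tau_2]$. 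Monotonicity of $r\mapsto C(f,T_0,\overline{y},r)$ holds because every $r$-dependent ingredient enters through an exponential $e^{cr}$, which is non-decreasing, and the final constant may be taken as the product of these factors (or, if one prefers, replaced by $\sup_{0<r'\le r}C(f,T_0,\overline{y},r')$, which is finite and non-decreasing by construction).

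The only genuine difficulty is this uniformity bookkeeping: one must make sure no constant degenerates as $T\to\infty$ or as $\tau_2-\tau_1$ grows. This works precisely because the construction confines the $T$-dependence to the middle ``rest'' phase at $\overline{y}$, which costs nothing, so only the two outer windows of the fixed length $T_0$ enter the final constant; and the a priori cap $\|y^{\tau_i}-\overline{y}\|\le r$ bounds $\|u^{\aux}\|_{L^2}$, hence also $\|u_T\|_{L^2}$, by a constant multiple of $r$, which tames the Gr\"onwall exponentials. Everything else is a routine time-translation of \Cref{lem: C1} and \Cref{def: ctrl}.
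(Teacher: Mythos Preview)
Your proposal is correct and follows essentially the same strategy as the paper: build the quasi-turnpike competitor $u^{\aux}$ by concatenating a control from $y^{\tau_1}$ to $\overline{y}$, zero on the middle segment, and a control from $\overline{y}$ to $y^{\tau_2}$; bound $J_{\tau_1,\tau_2}(u^{\aux})$ via the cost estimates of \Cref{def: ctrl} and a Gr\"onwall argument on the two outer windows of fixed length $T_0$; then use optimality and \Cref{lem: C1} to obtain the pointwise bound on $y_T(t)-\overline{y}$. The only cosmetic difference is that the paper carries out the Gr\"onwall step on each outer window in two stages (first bounding $\|y^{\dagger}(t)\|$, then $\|y^{\dagger}(t)-\overline{y}\|$) to make the $r$-dependence of the constants fully explicit, whereas you appeal directly to the argument of \Cref{lem: C1}; both routes give the same $r$-monotone constant.
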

	
	\noindent
	The key idea of the proof of \Cref{lem: unif time estimates} lies in the construction of a quasi-turnpike control (steering the corresponding trajectory from $y^{\tau_1}$ to $y^{\tau_2}$ in time $\tau_2-\tau_1$, whilst remaining at $\overline{y}$ over an interval of length $\tau_2-\tau_1-2T_0$; see the figure just below) in view of estimating each individual addend of $J_{\tau_1,\tau_2}(u_T)$, which is the minimal value of the functional $J_{\tau_1,\tau_2}$. 
	This construction will yield the desired result.
	
	\begin{figure}[h]
	\center
	\includegraphics[scale=0.58]{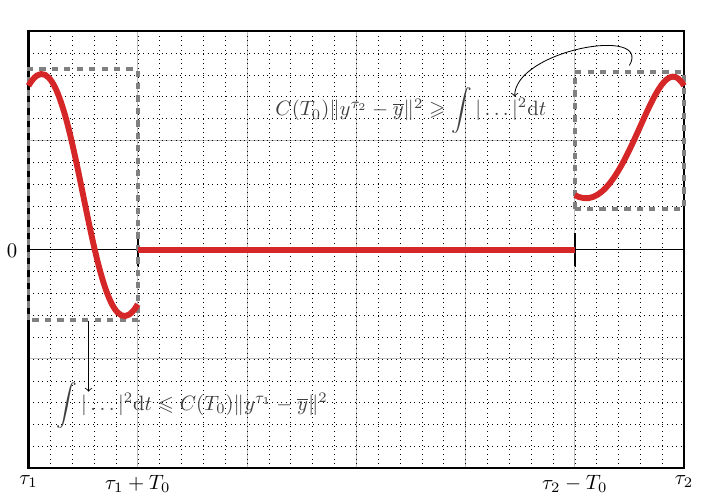}
	\includegraphics[scale=0.58]{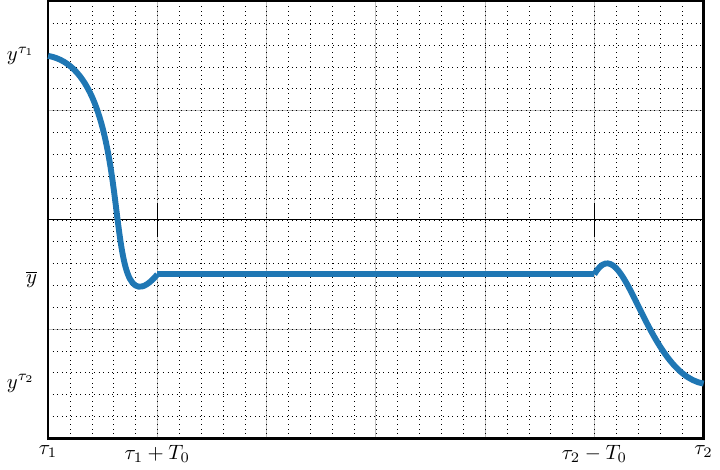}
	\caption{\textbf{Proof of \Cref{lem: unif time estimates}.} The first two terms appearing in the estimate implied by \Cref{lem: unif time estimates} also appear in the functional $J_{\tau_1, \tau_2}(u_T)$. 
       	We construct a quasi-turnpike control $u^\aux$ (red), for which the corresponding state $y^\aux$ (blue) coincides with $\overline{y}$ over $(\tau_1+T_0, \tau_2-T_0)$.
	In this way, as $J_{\tau_1, \tau_2}(u_T) \leqslant J_{\tau_1, \tau_2}(u^\aux)$, and $J_{\tau_1, \tau_2}(u^\aux)$ is independent of $T, \tau_1, \tau_2$, we can conclude. The estimate of the third term follows from \Cref{lem: C1}.
	} \label{fig: fig.3}
	\end{figure}

	\begin{proof}[Proof of \Cref{lem: unif time estimates}]
      
	Using the controllability assumption, we know the following.
	\begin{itemize}
	\item There exists a control $u^{\dagger} \in L^2(\tau_1, \tau_1+T_0; \R^{m})$ satisfying
	\begin{equation} \label{eq: first.control.estimate}
	\left\|u^{\dagger}\right\|_{L^2(\tau_1, \tau_1+T_0; \R^{m})}^2 \leqslant C(T_0) \left\|y^{\tau_1}-\overline{y}\right\|^2,
	\end{equation}
	for some $C(T_0)>0$ independent of $y^{\tau_1},\tau_1$, and which is such that the corresponding solution $y^{\dagger}$ to
	\begin{equation}\label{PC_xdagger}
	\begin{dcases}
	\dot{y}^\dagger = f\left(y^{\dagger}, u^{\dagger}\right) &\text{ in } (\tau_1, \tau_1+T_0) \\
	y^{\dagger}(\tau_1) = y^{\tau_1}
	\end{dcases}
	\end{equation}
	satisfies $y^{\dagger}(\tau_1+T_0) = \overline{y}$. By integrating \eqref{PC_xdagger}, and using the Lipschitz character of $f_0, \ldots, f_m$, the Gr\"onwall inequality, the Cauchy-Schwarz inequality, and \eqref{eq: first.control.estimate}, we see that
	\begin{align} \label{eq: first.state.estimate_boundedness}
	\left\| y^{\dagger}(t)\right\| &\leqslant C_0 \left(\left\|y^{\tau_1}\right\|+\left\|u^\dagger\right\|_{L^2(\tau_1,\tau_1+T_0; \R^{m})} + 1\right)\exp\left(C_0\left\|u^\dagger\right\|_{L^2(\tau_1,\tau_1+T_0; \R^{m})}\right)\nonumber \\
	&\leqslant C_1 \Big(\left\|y^{\tau_1}\right\|+ \left\|y^{\tau_1}-\overline{y}\right\|+1 \Big)\exp\Big(C_1\left\|y^{\tau_1}-\overline{y}\right\|\Big)\nonumber \\
	&\leqslant C_1\Big(\left\|y^{\tau_1}\right\|+r+1 \Big)\exp\Big(C_1 r\Big)\nonumber \\
	&\leqslant C_2 \Big(\left\|\overline{y}\right\|+r+1 \Big)\exp\Big(C_2 r\Big)
	\end{align}
	for some $C_0=C_0(f, T_0)>0$, $C_1=C_1(f,T_0)>0$, $C_2=C_2(f,T_0)>0$, and for every $t\in (\tau_1,\tau_1+T_0)$. 
	Then, by integrating \eqref{PC_xdagger} once again, and using $f_0(\overline{y})=0$, the Cauchy-Schwarz inequality and \eqref{eq: first.state.estimate_boundedness}, we moreover see that
	\begin{align} \label{eq: aux.quasiturnpike.est.1}
	\left\|y^\dagger(t) - \overline{y} \right\| &\leqslant \left\|y^{\tau_1} -\overline{y} \right\| + \int_{\tau_1}^t \sum_{j=1}^m \left|u^\dagger_j(s)\right| \left\|f_j(y^\dagger)\right\| \diff s + \int_{\tau_1}^t \left\|f\big(y^\dagger\big)-f(\overline{y})\right\|\diff s \nonumber\\
	&\leqslant \left\|y^{\tau_1} -\overline{y} \right\| + C_3\left\|u^\dagger\right\|_{L^2(\tau_1, \tau_1+T_0; \R^m)} + C(f) \int_{\tau_1}^t \left\|y^\dagger(s)-\overline{y}\right\|\diff s
	\end{align}
	for some $C_3(f, T_0, r,\overline{y})>0$, with $C(f)>0$ being the Lipschitz constant of the vector fields $f_j$.
	Finally, applying the Gr\"onwall inequality to \eqref{eq: aux.quasiturnpike.est.1} and using \eqref{eq: first.control.estimate}, we deduce that
	\begin{align} \label{eq: first.state.estimate}
	\left\|y^\dagger(t) - \overline{y}\right\|\leqslant C_4 \exp\left(C(f)T_0\right)\left\|y^{\tau_1} - \overline{y}\right\|
	\end{align}
	for some $C_4(f,T_0, \overline{y}, r)>0$ independent of $T, \tau_1$ and $\tau_2>0$, and for every $t\in(\tau_1, \tau_1+T_0)$.
	Note that in view of \eqref{eq: first.state.estimate_boundedness}, both $C_3$ and $C_4$ are non-decreasing with respect to the parameter $r>0$.
	\smallskip 
	
	\item There exists a control  $u^\ddagger \in L^2(\tau_1, \tau_1+T_0; \R^{m})$ satisfying
	\begin{equation} \label{eq: second.control.estimate}
	\left\|u^{\ddagger}\right\|_{L^2(\tau_1, \tau_1+T_0; \R^{m})}^2 \leqslant C(T_0) \left\|\overline{y}-y^{\tau_2}\right\|^2,
	\end{equation}
	and which is such that the corresponding solution $y^\ddagger$ to
	\begin{equation} \label{eq: xddagger}
	\begin{dcases}
	\dot{y}^\ddagger = f\left(y^\ddagger, u^\ddagger\right) &\text{ in } (\tau_1, \tau_1+T_0) \\
	y^\ddagger(\tau_1) = \overline{y}
	\end{dcases}
	\end{equation}
	satisfies $y^\ddagger(\tau_1+T_0) = y^{\tau_2}$. 
	By integrating \eqref{eq: xddagger}, and using the Lipschitz character of $f_0, \ldots, f_m$, the Gr\"onwall inequality, the Cauchy-Schwarz inequality and \eqref{eq: second.control.estimate}, we see that
	\begin{align}\label{eq: second.state.estimate_boundedness}
	\left\| y^{\ddagger}(t)\right\| &\leqslant C_5 \left(\left\|\overline{y}\right\|+\left\|u^\ddagger\right\|_{L^2(\tau_1,\tau_1+T_0; \R^m)}+1 \right)\exp\left(C_5\left\|u^\ddagger\right\|_{L^2(\tau_1,\tau_1+T_0; \R^{m})}\right)\nonumber \\
	&\leqslant C_6 \Big(\left\|\overline{y}\right\|+\left\|\overline{y}-y^{\tau_2}\right\|+1 \Big)\exp\Big(C_6 \left\|\overline{y}-y^{\tau_2}\right\|\Big)\nonumber \\
	&\leqslant C_6 \Big(\left\|\overline{y}\right\|+ r+1 \Big)\exp\Big(C_6 r\Big)
	\end{align}
	for some $C_5(f)>0$ and $C_6(f,T_0)>0$, and for every $t\in (\tau_1,\tau_1+T_0)$. 
	Then, by integrating \eqref{eq: xddagger} once again, and using $f_0(\overline{y})=0$, the Cauchy-Schwarz inequality and \eqref{eq: second.state.estimate_boundedness}, we moreover see that
	\begin{align} \label{eq: aux.quasiturnpike.est.2}
	\left\|y^\ddagger(t) - \overline{y} \right\| &\leqslant \int_{\tau_1}^t \sum_{j=1}^m \left|u^\ddagger_j(s)\right| \left\|f_j(y^\ddagger)\right\| \diff s + \int_{\tau_1}^t \left\|f\big(y^\ddagger\big)-f(\overline{y})\right\|\diff s \nonumber\\
	&\leqslant C_7\left\|u^\ddagger\right\|_{L^2(\tau_1, \tau_1+T_0; \R^m)} + C(f) \int_{\tau_1}^t \left\|y^\ddagger(s)-\overline{y}\right\|\diff s
	\end{align}
	for some $C_7(f, T_0, r,\overline{y})>0$, with $C(f)>0$ being the Lipschitz constant of the vector fields $f_j$.
	Finally, applying the Gr\"onwall inequality to \eqref{eq: aux.quasiturnpike.est.2} and using \eqref{eq: second.control.estimate}, we deduce that
	\begin{align} \label{eq: second.state.estimate}
	\left\|y^\ddagger(t) - \overline{y}\right\|\leqslant C_8 \exp\left(C(f)T_0\right)\left\|y^{\tau_2} - \overline{y}\right\|
	\end{align}
	for some $C_8(f,T_0, \overline{y}, r)>0$ independent of $T, \tau_1, \tau_2>0$, and for every $t \in (\tau_1, \tau_1+T_0)$.
	Note that in view of \eqref{eq: first.state.estimate_boundedness}, both $C_7$ and $C_8$ are non-decreasing with respect to the parameter $r>0$.
	\end{itemize}
	Now set
	\begin{equation*}
	u^\aux(t):= 
	\begin{dcases}
	u^{\dagger}(t) &\text{ in } (\tau_1, \tau_1+T_0) \\
	0 &\text{ in } (\tau_1+T_0, \tau_2-T_0) \\
	u^\ddagger\left(t-(\tau_2-\tau_1-T_0)\right) &\text{ in } (\tau_2-T_0, \tau_2),
	\end{dcases}
	\end{equation*}
	and let $y^\aux$ be the corresponding solution to \eqref{eq: system.tau1.tau2}.
	By construction, we have 
	\begin{equation*}
	y^\aux(t) = y^\dagger(t) \hspace{1cm} \text{ in } [\tau_1,\tau_1+T_0],
	\end{equation*}
	and thus
	\begin{equation} \label{eq: aux.eq.1}
	y^\aux(t) = \overline{y} \hspace{1cm} \text{ in } [\tau_1+T_0, \tau_2-T_0], 
	\end{equation}
	whereas we also have $y^\aux(\tau_2) = y^{\tau_2}$, whence $u^\aux \in U_\ad$.
	We now evaluate $J_{\tau_1,\tau_2}$ at $u^\aux$, which by virtue of a simple change of variable as well as \eqref{eq: aux.eq.1}, \eqref{eq: first.control.estimate}, \eqref{eq: first.state.estimate}, \eqref{eq: second.control.estimate} and \eqref{eq: second.state.estimate}, leads us to
	\begin{align} \label{eq: aux.estimate.1}
	J_{\tau_1, \tau_2}(u^\aux) &= \left\| u^\dagger \right\|_{L^2(\tau_1,\tau_1+T_0;\R^{m})} + \left\| u^\ddagger \right\|_{L^2(\tau_1,\tau_1+T_0;\R^{m})} \nonumber \\
	&\quad+ \int_{\tau_1}^{\tau_1+T_0} \left\|y^\dagger(t)-\overline{y}\right\|^2 \diff t + \int_{\tau_1}^{\tau_1+T_0} \left\|y^\ddagger(t)-\overline{y}\right\|^2\diff t \nonumber \\
	&\leqslant C_{9} \Big(\left\|\overline{y}-y^{\tau_1}\right\|^2+\left\|\overline{y}-y^{\tau_2}\right\|^2\Big)
	\end{align}
	where $C_{9} = C_{9}(f,\overline{y},T_0,r)>0$ is independent of $T, \tau_1, \tau_2>0$, and is non-decreasing with respect to $r$.
	Hence $u_T \in U_\ad$ is uniformly bounded with respect to $T, \tau_1, \tau_2>0$, as in view of \eqref{eq: aux.estimate.1} we have
	\begin{align*}
	\left\|y_T-\overline{y}\right\|_{L^2(\tau_1,\tau_2;\R^{d})}^2 + \left\|u_T\right\|_{L^2(\tau_1,\tau_2;\R^{m})}^2 \leqslant J_{\tau_1,\tau_2}\left(u_T\right) &\leqslant J_{\tau_1, \tau_2}\left(u^\aux\right) \\
	&\leqslant C_{9} \Big(\left\|\overline{y}-y^{\tau_1}\right\|^2+\left\|\overline{y}-y^{\tau_2}\right\|^2\Big).
	\end{align*} 
	An application of \Cref{lem: C1} combined with the uniform boundedness of $\|u_T\|_{L^2(\tau_1,\tau_2;\R^{m})}$ with respect to $T, \tau_2, \tau_1>0$ suffices to conclude.
	\end{proof}
	
	\noindent
	Before proceeding with the proof of \Cref{thm: turnpike}, we will need the following key lemma.

	\begin{lemma}
	\label{lem: D1}
	Let $X$ be a Banach space, $T>0$ and $f \in C^0([0, T]; X)$. For any $\tau \leqslant \frac{T}{2}$, there exist $t_1 \in [0, \tau)$ and $t_2 \in (T-\tau, T]$ such that 
	\begin{equation*}
	\|f(t_i)\|_X \leqslant \frac{\|f\|_{L^2(0, T; X)}}{\sqrt{\tau}} \hspace{1cm} \text{ for } i =1, 2.
	\end{equation*}
	\end{lemma}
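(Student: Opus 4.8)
The plan is to argue by contradiction on each of the two intervals separately, exactly in the spirit of the mean-value argument already used to establish \eqref{eq: step1.lemC1} in the proof of \Cref{lem: C1}. The point is simply that a function whose norm exceeds its $L^2$--average on an interval of length $\tau$ cannot have $L^2$--norm over that (sub)interval bounded by its $L^2$--norm over all of $[0,T]$.

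First I would handle the existence of $t_1 \in [0, \tau)$. Suppose, for contradiction, that
\[
\|f(t)\|_X > \frac{\|f\|_{L^2(0,T;X)}}{\sqrt{\tau}} \qquad \text{ for every } t \in [0, \tau).
\]
Squaring and integrating this strict inequality over $[0,\tau)$, which has Lebesgue measure $\tau$, gives
\[
\int_0^\tau \|f(t)\|_X^2 \diff t > \tau \cdot \frac{\|f\|_{L^2(0,T;X)}^2}{\tau} = \|f\|_{L^2(0,T;X)}^2 .
\]
On the other hand, since $\tau \leq \tfrac{T}{2} \leq T$, the interval $[0,\tau)$ is contained in $[0,T]$, so $\int_0^\tau \|f(t)\|_X^2 \diff t \leq \int_0^T \|f(t)\|_X^2 \diff t = \|f\|_{L^2(0,T;X)}^2$, which is a contradiction. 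Hence there exists $t_1 \in [0,\tau)$ with $\|f(t_1)\|_X \leq \|f\|_{L^2(0,T;X)}/\sqrt{\tau}$. The existence of $t_2 \in (T-\tau, T]$ follows verbatim, replacing $[0,\tau)$ by $(T-\tau,T]$, which again has measure $\tau$ and is contained in $[0,T]$ because $\tau \leq \tfrac{T}{2}$ forces $T-\tau \geq 0$.

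I do not expect any genuine obstacle here: the argument is a one-line averaging/pigeonhole estimate. The only point deserving a word of care is the degenerate case $\|f\|_{L^2(0,T;X)} = 0$, in which the right-hand side of the claimed bound is $0$; but then by continuity $f \equiv 0$ on $[0,T]$ and any choice of $t_1, t_2$ works (and in fact the contradiction argument above still applies verbatim, since the assumed strict inequality would force $\int_0^\tau \|f(t)\|_X^2 \diff t > 0 = \|f\|_{L^2(0,T;X)}^2$). Note that continuity of $f$ is not actually used — measurability together with $f \in L^2(0,T;X)$ suffices — but it is harmless to keep the hypothesis as stated.
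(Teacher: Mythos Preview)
Your proof is correct and follows essentially the same contradiction/averaging argument as the paper; the only cosmetic difference is that the paper bundles the two intervals into a single contradiction (using $\tau\le T/2$ so that $[0,\tau)$ and $(T-\tau,T]$ are disjoint), whereas you treat them separately.
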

	
	\begin{proof}[Proof of \Cref{lem: D1}]
	Denote 
	\begin{equation*}
	\eta(\tau):=\frac{\|f\|_{L^2(0, T; X)}}{\sqrt{\tau}}.
	\end{equation*}
	We argue by contradiction. Assume that either 
	\begin{equation*}
	\|f(t)\|_X > \eta(\tau) \hspace{1cm} \text{ for all } t \in [0, \tau)
	\end{equation*}
	or 
	\begin{equation*}
	\|f(t)\|_X > \eta(\tau) \hspace{1cm} \text{ for all } t \in (T- \tau, T].
	\end{equation*}
	hold. Then we have
	\begin{equation*}
	\int_0^T \|f(t)\|_X^2 \diff t \geqslant \int_0^\tau \|f(t)\|^2_X \diff t + \int_{T-\tau}^T \|f(t)\|^2_X \diff t > \tau \eta(\tau)^2.
	\end{equation*}
	Hence 
	\begin{equation*}
	\eta(\tau)^2 < \frac{1}{\tau} \int_0^T \|f(t)\|^2_X \diff t = \eta(\tau)^2,
	\end{equation*}
	which yields a contradiction. This concludes the proof.
	\end{proof}
	
	\subsection{Proof of \Cref{thm: turnpike}} \label{sec: proof.turnpike} 
	
	We are now in a position to prove our first main result.	
	
	\begin{proof}[Proof of \Cref{thm: turnpike}]
	
	We begin by noting that \eqref{eq: turnpike.1} follows from \Cref{lem: quasiturnpike.1}. 
	We thus concentrate on proving \eqref{eq: turnpike.2}, and we split the proof in two parts.
	Before proceeding, let us first note that by \Cref{lem: quasiturnpike.1}, there exists a constant $C_1>0$, depending only on $f, T_0, \overline{y}, y^0, \phi$, such that
	\begin{equation} \label{eq: c1.def}
	\sup_{t\in[0,T]}\|y_T(t) - \overline{y}\|+\|y_T-\overline{y}\|_{L^2(0,T;\mathbb{R}^d)} \leqslant C_1
	\end{equation}
	holds for any $T>0$ and pair $(u_T,y_T)$ which is optimal for \eqref{eq: J_T}.
	Let $r>0$ be the radius provided by \Cref{def: ctrl}.
	By \Cref{lem: unif time estimates}, we also know that there exists a constant $C_2>0$, depending only on $r, f, T_0,\overline{y}$, such that whenever $T\geqslant 2T_0$, for any $\tau_1, \tau_2 \in [0,T]$ such that $\tau_2-\tau_1\geqslant 2 T_0$ and
	\begin{equation*}
	\|y_T(\tau_i)-\overline{y}\| \leqslant r,
	\end{equation*}	
	the estimate
	\begin{equation} \label{eq: c2.def}
	\sup_{t \in [\tau_1, \tau_2]}\|y_T(t) - \overline{y}\|+\|y_T-\overline{y}\|_{L^2(\tau_1,\tau_2;\mathbb{R}^d)}\leqslant C_2\Big(\|y_T(\tau_2)-\overline{y}\|+ \|y_T(\tau_1)-\overline{y}\|\Big)
	\end{equation}
	holds for any pair $(u_T,y_T)$ which is optimal for \eqref{eq: J_T-aux}.
	Now fix
	\begin{equation} \label{eq: buffer.tau}
	\tau > 16C_2^4 + \frac{C_1^2}{r^2} + \frac{4C_1^2C_2^2}{r^2},
	\end{equation}
	and let
	\begin{equation*}
	T>2(\tau+T_0):=T^*
	\end{equation*}
	be fixed. Let $(u_T,y_T)$ thus be an optimal pair for \eqref{eq: J_T} with $T$ as such.
	The choice of the buffer time $\tau$ will become clear in what follows. 
	\smallskip
	
	\noindent
	\textbf{Part 1.} We note that for $t\in[0, \tau+T_0]$ and $t\in[T-(\tau+T_0), T]$, the desired estimate \eqref{eq: turnpike.2} can be obtained without too much difficulty, as the length of both time intervals is independent of $T$.
	Indeed, by \eqref{eq: c1.def}, for any $\mu>0$ we have
	\begin{align} \label{eq: 0-tau+T}
	\|y_T(t)-\overline{y}\|&\leqslant C_1 e^{\mu t} e^{-\mu t}\nonumber\\
	&\leqslant C_1 e^{\mu (\tau+T_0)} \Big(e^{-\mu t} + e^{-\mu(T-t)}\Big)
	\end{align} 
	for $t\in[0, \tau+T_0]$, and 
	\begin{align} \label{eq: T-tau+T0}
	\|y_T(t)-\overline{y}\|&\leqslant C_1 e^{\mu (T-t)} e^{-\mu (T-t)}\nonumber\\
	&\leqslant C_1 e^{\mu (\tau+T_0)} \Big(e^{-\mu t} + e^{-\mu(T-t)}\Big)
	\end{align}
	for $t\in[T-(\tau+T_0), T]$. 	
	\smallskip
	
	\noindent
	\textbf{Part 2.}	
	We now aim to show that \eqref{eq: turnpike.2} holds for $t\in[\tau+T_0, T-(\tau+T_0)]$. To this end, we proceed in three steps.
	\smallskip
	
	\noindent
	\emph{Step 1): Preparation.} 
	Since $\tau\leqslant\frac{T}{2}$, by \Cref{lem: D1} there exist a couple of time instances $\tau_1\in[0, \tau)$ and $\tau_2\in(T-\tau, T]$ such that 
	\begin{equation} \label{eq: tubular.1}
	\|y_T(\tau_i)-\overline{y}\|\leqslant\frac{\|y_T-\overline{y}\|_{L^2(0,T; \R^d)}}{\sqrt{\tau}}\stackrel{\eqref{eq: c1.def}}{\leqslant} \frac{C_1}{\sqrt{\tau}}.
	\end{equation}
	Note that, by virtue of the choice of $\tau$ in \eqref{eq: buffer.tau}, we have that $\frac{C_1}{\sqrt{\tau}}\leqslant r$ and thus 
	\begin{equation} \label{eq: tubular.2}
	\|y_T(\tau_i)-\overline{y}\| \leqslant r
	\end{equation}
	also holds.
	We shall now restrict our analysis onto $[\tau_1, \tau_2]$, and extrapolate onto the subset $[\tau, T-\tau]$. 
	First note that $u_T|_{[\tau_1, \tau_2]}$ is a global minimizer\footnote{This can be shown by contradiction.} of $J_{\tau_1, \tau_2}$ defined in \eqref{eq: J_T-aux} with fixed endpoints $y^{\tau_1}=y_T(\tau_1)$ and $y^{\tau_2}=y_T(\tau_2)$, and thus clearly $y_T|_{[\tau_1,\tau_2]}$ solves \eqref{eq: system.tau1.tau2}. 
	As 
	\begin{equation*}
	\tau_2 - \tau_1 \geqslant T-2\tau\geqslant 2T_0,
	\end{equation*}
	in view of \eqref{eq: tubular.2}, we may use \eqref{eq: c2.def} to find that  
	\begin{equation} \label{eq: aux.step.n=1}
	\|y_T(t) - \overline{y}\|\leqslant C_2\Big(\|y_T(\tau_1)-\overline{y}\| + \|y_T(\tau_2)-\overline{y}\|\Big)
	\end{equation}
	holds for all $t\in[\tau_1, \tau_2]$. 
	Setting 
	\begin{equation*}
	\kappa:= \max\left\{1, \frac{C_1}{C_2}\right\},
	\end{equation*}
	and applying \eqref{eq: tubular.1} to inequality \eqref{eq: aux.step.n=1}, we deduce that
	\begin{align} 
	\|y_T(t) - \overline{y}\|&\leqslant\frac{2C_1C_2}{\sqrt{\tau}}\label{eq: n=0.5}\\
	&\leqslant\frac{\kappa}{2}\frac{4C_2^2}{\sqrt{\tau}} \label{eq: n=1}
	\end{align}
	holds for all $t \in [\tau_1,\tau_2]$. As $\tau_1\leqslant\tau$ and $T-\tau\leqslant \tau_2$, estimates \eqref{eq: n=0.5} and \eqref{eq: n=1} clearly hold for all $t\in[\tau, T-\tau]$.
	\smallskip
	
	\noindent
	\emph{Step 2): Bootstrap.}
	Inequality \eqref{eq: n=1} motivates performing a bootstrap -- we will show that for any $n\in\N$ satisfying
	\begin{equation*} \label{eq: choice.n}
	n \leqslant \frac{1}{\tau}\left(\frac{T}{2}-T_0\right),
	\end{equation*}
	one has
	\begin{equation} \label{eq: induction.n}
	\sup_{t\in[n\tau, T-n\tau]}\|y_T(t) - \overline{y}\|\leqslant\frac{\kappa}{2}\left(\frac{4C_2^2}{\sqrt{\tau}}\right)^n.
	\end{equation}
	The choice of $n$ is done as to guarantee that $T-2n\tau\geqslant 2T_0$ in view of a repeated application of \Cref{lem: unif time estimates} (namely \eqref{eq: c2.def}).
	Note that \eqref{eq: n=0.5}, combined with the choice of $\tau$ in \eqref{eq: buffer.tau}, also implies that
	\begin{equation} \label{eq: tubular.3}
	\|y_T(t) - \overline{y}\|\leqslant r
	\end{equation}
	for all $t\in[\tau, T-\tau]$. To prove \eqref{eq: induction.n}, we proceed by induction. 
	The case $n=1$ clearly holds by \eqref{eq: n=1}. 
	Thus, assume that \eqref{eq: induction.n} holds -- we aim to show that \eqref{eq: induction.n} holds at step $n+1$.
	To this end, suppose that
	\begin{equation*}
	n+1 \leqslant \frac{1}{\tau}\left(\frac{T}{2}-T_0\right).
	\end{equation*}
	This is equivalent to $T-2n\tau-2T_0\geqslant2\tau$ (and recall that $\tau>0$ is fixed), and it also clearly implies that
	\begin{equation} \label{eq: ntau.lemma33}
	\tau \leqslant \frac{T-2n\tau}{2}.
	\end{equation}
	Since $T-2n\tau\geqslant 2T_0$, as in Step 1, it can be shown that $u_T|_{[n\tau, T-n\tau]}$ is a global minimizer of $J_{n\tau, T-n\tau}$ defined in \eqref{eq: J_T-aux}.
	Taking these facts into account, and noting that \eqref{eq: tubular.3} holds\footnote{Note that $n\tau\geqslant \tau$ and $T-n\tau \leqslant T-\tau$, so \eqref{eq: tubular.3} also holds for $t \in [n\tau, T-n\tau]$, hence \Cref{lem: unif time estimates} is applicable.}, we can apply \Cref{lem: D1} on $[n\tau, T-n\tau]$ (noting \eqref{eq: ntau.lemma33}), and \Cref{lem: unif time estimates} with $\tau_1=n\tau$ and $\tau_2 = T-n\tau$, to deduce that there exist a couple of times $t_1\in[n\tau, (n+1)\tau)$ and $t_2 \in(T-(n+1)\tau, T-n\tau]$ such that
	\begin{align*}
	\|y_T(t_i) - \overline{y}\|&\leqslant\frac{\|y_T-\overline{y}\|_{L^2(n\tau,T-n\tau; \R^d)}}{\sqrt{\tau}}\leqslant \frac{C_2}{\sqrt{\tau}} \Big(\|y_T(n\tau)-\overline{y}\| + \left\|y_T(T-n\tau)-\overline{y}\right\|\Big).
	\end{align*}
	We now use the induction hypothesis \eqref{eq: induction.n} in the above inequality to obtain
	\begin{equation} \label{eq: estimate.endpoints}
	\|y_T(t_i) - \overline{y}\|\leqslant\kappa\frac{C_2}{\sqrt{\tau}} \left(\frac{4C_2^2}{\sqrt{\tau}}\right)^n 
	\end{equation}
	Now since 
	\begin{equation*}
	t_2-t_1 \geqslant T-2(n+1)\tau \geqslant 2T_0,
	\end{equation*}
	and since $u_T|_{[t_1,t_2]}$ is a global minimizer of $J_{t_1, t_2}$ defined in \eqref{eq: J_T-aux}, combining \Cref{lem: unif time estimates}\footnote{May be applied once again since \eqref{eq: tubular.3} holds for $t=t_1\geqslant \tau$ and $t=t_2 \leqslant T-\tau$.} and \eqref{eq: estimate.endpoints} we are led to deduce that
	\begin{align} \label{eq: last.bootstrap}
	\|y_T(t) - \overline{y}\| &\leqslant C_2\Big(\|y_T(t_1)-\overline{y}\| + \|y_T(t_2) - \overline{y}\|\Big) \nonumber \\
	&\leqslant \frac{\kappa}{2} \frac{4C_2^2}{\sqrt{\tau}} \left(\frac{4C_2^2}{\sqrt{\tau}}\right)^n
	\end{align}
	for $t \in [t_1, t_2]$. Since $t_1 < (n+1)\tau$  and $T-(n+1)\tau<t_2$, estimate
	\eqref{eq: last.bootstrap} clearly also holds for $t\in[(n+1)\tau, T-(n+1)\tau]$.
	Identity \eqref{eq: induction.n} is thus proven.
	\smallskip 

	 \noindent
	 \emph{Step 3): Conclusion.}
	We now look to use \eqref{eq: induction.n} as to conclude the proof.
	Suppose that $t\in[\tau+T_0, T-(\tau+T_0)]$. We set
	\begin{equation*}
	n(t):=\min\left\{\left\lfloor\frac{t}{\tau+T_0} \right\rfloor, \left\lfloor \frac{T-t}{\tau+T_0} \right\rfloor\right\},
	\end{equation*}
	where $\lfloor z\rfloor$ denotes the integer part of $z \in \R$.
	Clearly $n(t) \geqslant 1$ and
	\begin{equation*}
	n(t)\tau \leqslant t \leqslant T-n(t)\tau.
	\end{equation*} 
	Moreover, since $z \mapsto \frac{z-2T_0}{z}$ is non-decreasing,
	\begin{align*}
	n(t) \leqslant \frac{T}{2(\tau+T_0)}=\frac{T}{2\tau} \frac{2(\tau+T_0)-2T_0}{2(\tau+T_0)}&\leqslant \frac{T}{2\tau} \frac{T-2T_0}{T} = \frac{1}{\tau}\left(\frac{T}{2}-T_0\right). 
	\end{align*}
	We may then apply \eqref{eq: induction.n} to obtain 
	\begin{equation} \label{eq: deduced.eq}
	\|y_T(t) - \overline{y}\|\leqslant\frac{\kappa}{2}\left(\frac{4C_2^2}{\sqrt{\tau}}\right)^{n(t)}. 
	\end{equation}
	By virtue of the choice of $\tau$ in \eqref{eq: buffer.tau}, we see that 
	\begin{equation*}
	\frac{4C_2^2}{\sqrt{\tau}}<1.
	\end{equation*}
	Moreover, since either 
	\begin{equation*}
	n(t) \geqslant\frac{t}{\tau+T_0}-1 \hspace{0.5cm} \text{ or } \hspace{0.5cm} n(t) \geqslant \frac{T-t}{\tau+T_0}-1,
	\end{equation*}
	we may rewrite \eqref{eq: deduced.eq} to obtain 
	\begin{align} \label{eq: hard.turnpike}
	\|y_T(t)-\overline{y}\|&\leqslant \frac{\kappa}{2}\exp\left(-n(t)\log\left(\frac{\sqrt{\tau}}{4C_2^2}\right)\right) \nonumber \\
	&\leqslant \frac{\kappa}{2}\frac{\sqrt{\tau}}{4C_2^2}\left(\exp\left(-\frac{\log\left(\frac{\sqrt{\tau}}{4C_2^2}\right)}{\tau+T_0}\, t\right)+\exp\left(-\frac{\log\left(\frac{\sqrt{\tau}}{4C_2^2}\right)}{\tau+T_0}\, (T-t)\right)\right).
	\end{align}
	Looking at \eqref{eq: hard.turnpike}, we see that estimate \eqref{eq: turnpike.2} thus holds for all $t\in[\tau+T_0, T-(\tau+T_0)]$, with 
	\begin{equation*}
	C:=\frac{\kappa\sqrt{\tau}}{8C_2^2}>0,
	\end{equation*}
	and 
	\begin{equation} \label{eq: mu}
	\mu:= \frac{\log\left(\frac{\sqrt{\tau}}{4C_2^2}\right)}{\tau+T_0}>0.
	\end{equation}
	By virtue of \eqref{eq: 0-tau+T}, \eqref{eq: T-tau+T0} and \eqref{eq: hard.turnpike}, we deduce that \eqref{eq: turnpike.2} holds for all $t \in [0, T]$, with $T^*:=2(\tau+T_0)$,  
	\begin{equation} \label{eq: turnpike.constant.C}
	C:= \max\left\{C_1,\frac{\kappa\sqrt{\tau}}{8C_2^2}\right\}>0,
	\end{equation}
	and $\mu>0$ as in \eqref{eq: mu}.
	This concludes the proof.
\end{proof}

    \subsection{Proof of \Cref{cor: control.decay}} \label{sec: proof.control.decay}
    
    We finish this section with the proof of \Cref{cor: control.decay}, which stipulates an exponential decay of optimal controls in the context of driftless control-affine systems, namely \eqref{eq: nonlinear.control.system} with a nonlinearity of the form
    \begin{equation} \label{eq: driftless}
    f(y,u) = \sum_{j=1}^m u_j f_j(y) \hspace{1cm} \text{ for } (y, u) \in \R^d\times \R^m.
    \end{equation} 
    We recall that $f_1, \ldots, f_m \in \Lip(\R^d; \R^d)$. We begin with the following  result.
    	
	\begin{lemma} \label{lem: scaling}
	Suppose $T_0>0$, $y^0 \in \R^d$ and $u_{T_0}\in L^2(0, T_0; \R^{m})$ are all given. Let $y_{T_0}\in C^0([0, T_0]; \R^{d})$ be the unique solution to 
	\begin{equation} \label{eq: 5.7}
	\begin{dcases}
	\dot{y}_{T_0} = f(y_{T_0}, u_{T_0}) &\text{ in } (0, T_0) \\
	y_{T_0}(0) = y^0
	\end{dcases}
	\end{equation}
	with $f$ as in \eqref{eq: driftless}.
	Let $T>0$, and define
	\begin{equation*} \label{eq: uT}
	u_T(t):= \frac{T_0}{T} u_{T_0}\left(t\frac{T_0}{T} \right)\hspace{1cm} \text{ for } t\in [0, T],
	\end{equation*}
	and 
	\begin{equation*} \label{eq: xT}
	y_T(t) := y_{T_0}\left(t\frac{T_0}{T}\right) \hspace{1cm} \text{ for } t \in [0, T].
	\end{equation*}
	Then $y_T\in C^0([0, T]; \R^{d})$ is the unique solution to \eqref{eq: nonlinear.control.system} with $y_T(0)=y^0$ and control $u_T$.
	\end{lemma}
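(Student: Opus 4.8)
The plan is to work entirely with the Carathéodory (integral) formulation of the dynamics, so as to sidestep any subtlety coming from the fact that the trajectories are merely absolutely continuous. First I would record that, since $u_{T_0} \in L^2(0,T_0;\R^m)$, the time-scaled function $u_T$ defined in the statement belongs to $L^2(0,T;\R^m)$: a change of variables gives $\|u_T\|_{L^2(0,T;\R^m)}^2 = \tfrac{T_0}{T}\|u_{T_0}\|_{L^2(0,T_0;\R^m)}^2$, so in particular $u_T \in L^1(0,T;\R^m)$ and the Cauchy problem \eqref{eq: nonlinear.control.system} with control $u_T$ and initial datum $y^0$ admits a unique solution in $C^0([0,T];\R^d)$, as recalled in \Cref{sec: ode}. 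It then suffices to verify that the candidate $y_T$ defined in the statement satisfies the integral equation characterizing that solution.

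The key step is the change of variables in the integral identity combined with the homogeneity of $f$ with respect to the control. Starting from
\begin{equation*}
y_{T_0}(s) = y^0 + \int_0^s f\big(y_{T_0}(r), u_{T_0}(r)\big)\diff r \hspace{1cm} \text{ for } s \in [0, T_0],
\end{equation*}
I would evaluate at $s = t\tfrac{T_0}{T}$ and substitute $r = \sigma \tfrac{T_0}{T}$ to obtain
\begin{equation*}
y_T(t) = y^0 + \int_0^t \frac{T_0}{T}\, f\big(y_T(\sigma), u_{T_0}(\sigma \tfrac{T_0}{T})\big)\diff \sigma.
\end{equation*}
Because $f$ is driftless and control-affine, i.e. of the form \eqref{eq: driftless}, it is positively homogeneous of degree one in $u$: $\tfrac{T_0}{T} f(y, v) = f(y, \tfrac{T_0}{T} v)$ for every $y \in \R^d$, $v \in \R^m$. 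Applying this with $v = u_{T_0}(\sigma \tfrac{T_0}{T})$ turns the integrand into $f(y_T(\sigma), u_T(\sigma))$, so $y_T$ solves the integral equation associated to \eqref{eq: nonlinear.control.system} with control $u_T$; since $y_T(0) = y_{T_0}(0) = y^0$, uniqueness identifies $y_T$ with the (unique) solution, which is the claim.

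There is essentially no serious obstacle here — the statement is a soft rescaling lemma — but the one point that must be handled with a little care is precisely the use of homogeneity: it is what forces the hypothesis $f_0 \equiv 0$ (a nonzero drift $f_0(y)$ would pick up the scalar factor $\tfrac{T_0}{T}$ and not be reproducible by rescaling the control), and it is also the feature that makes the rescaled control cost shrink like $\tfrac{T_0}{T}$, which is exactly the estimate exploited later in the proof of \Cref{cor: control.decay}. I would therefore state the homogeneity identity explicitly and note in passing that the $L^2$-norm of the control scales by the factor $\tfrac{T_0}{T}$, as this is the quantitative content that will be reused.
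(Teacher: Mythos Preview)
Your proof is correct and follows essentially the same approach as the paper: both argue via the integral formulation, perform the change of variable $r=\sigma\tfrac{T_0}{T}$, invoke the positive homogeneity of the driftless $f$ in $u$ to absorb the factor $\tfrac{T_0}{T}$, and conclude by uniqueness. Your additional remarks on the $L^2$-scaling of $u_T$ and the necessity of $f_0\equiv 0$ are accurate and anticipate how the lemma is used downstream.
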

	
	\noindent
	This sort of time-scaling in the context of driftless control affine systems is commonly used -- a canonical example is the proof of the Chow-Rashevskii theorem (\cite[Chapter 3, Section 3.3]{coron2007control}). We provide the short proof for completeness.
	
	\begin{proof}[Proof of \Cref{lem: scaling}]
	Using the fact that $y_{T_0}$ is the solution to \eqref{eq: 5.7} and the change of variable $\tau = s\frac{T}{T_0}$, we see that
	\begin{align*}
	y_T(t) :=  y_{T_0}\left(t\frac{T_0}{T}\right) &= y^0 +\int_0^{t\frac{T_0}{T}} f(y_{T_0}(s), u_{T_0}(s)) \diff s \nonumber\\
	&= y^0  + \int_0^t \frac{T_0}{T} f\left(y_{T_0}\left(\tau\frac{T_0}{T}\right), u_{T_0}\left(\tau\frac{T_0}{T}\right) \right)\diff \tau\nonumber\\
	&= y^0 + \int_0^t f\left(y_T(\tau), u_T(\tau)\right) \diff \tau.
	\end{align*}
	It follows that $y_T$ solves \eqref{eq: nonlinear.control.system} with $y_T(0)=y^0$, and we conclude by uniqueness.
	\end{proof}

    \begin{proof}[Proof of \Cref{cor: control.decay}]
	Fix any $t\in [0,T)$ and $0<h\ll1$ so that $t+2h\in [0,T]$, and set
    \begin{equation*}
        u^{\aux}(s) :=
        \begin{dcases}
       u_T(s) &\text{ for }s\in[0, t] \\
       \dfrac{1}{2}u_T\left(t+\dfrac{s-t}{2}\right) &\text{ for }s\in(t, t+2h] \\
       u_T(s-h) &\text{ for }s\in(t+2h, T]. 
        \end{dcases}
    \end{equation*}
    By \Cref{lem: scaling}, the state $y^\aux$, solution to \eqref{eq: nonlinear.control.system} associated to $u^{\aux}$ is precisely
    \begin{equation*}
    y^{\aux}(s) =
    \begin{dcases}
    y_T(s) &\text{ for }s\in[0, t] \\
    y_T\left(t+\dfrac{s-t}{2}\right) &\text{ for }s\in(t, t+2h] \\
    y_T(s-h) &\text{ for }s\in(t+2h, T].
    \end{dcases}
	\end{equation*}
	By means of simple changes of variables, and using the suboptimality of $u^\aux$, we can readily see that
	\begin{align}\label{cor: stabilisation_proof_eq60}
        J_T\left(u_T\right) \leqslant J_T\left(u^\aux\right) &= \int_{0}^{T}\left\|u^{\aux}(s)\right\|^2\diff s+\int_{0}^{T}\left\|y^{\aux}(s)-\overline{y}\right\|^2\diff s\nonumber\\
        &=\int_{0}^{T-h}\left\|u_T(s)\right\|^2\diff s-\dfrac{1}{2}\int_t^{t+h}\left\|u_T\left(s\right)\right\|^2\diff s\nonumber\\
        &\quad+\int_{0}^{T-h}\left\|y_{T}(s)-\overline{y}\right\|^2+\int_t^{t+h}\left\|y_T\left(s\right)-\overline{y}\right\|^2\diff s\nonumber\\
        &\leqslant\int_{0}^{T}\left\|u_T(s)\right\|^2\diff s-\frac{1}{2}\int_t^{t+h}\left\|u_T\left(s\right)\right\|^2\diff s \nonumber\\
        &\quad+\int_{0}^{T}\left\|y_{T}(s)-\overline{y}\right\|^2+\int_t^{t+h}\left\|y_T\left(s\right)-\overline{y}\right\|^2\diff s.
    \end{align}
   From \eqref{cor: stabilisation_proof_eq60}, one sees that
    \begin{equation} \label{eq: all.just.for.this.2}
        \frac{1}{2}\int_t^{t+h}\left\|u_T\left(s\right)\right\|^2\diff s\leqslant \int_t^{t+h}\left\|y_T\left(s\right)-\overline{y}\right\|^2\diff s.
    \end{equation}
    We combine \eqref{eq: all.just.for.this.2} with \eqref{eq: turnpike.4} to deduce that
    \begin{align} \label{eq: last.before.lebesgue.2}
    \frac{1}{h}\int_t^{t+h}\left\|u_T(s)\right\|^2\diff s\leqslant\frac{2}{h}\int_t^{t+h}\left\|y_T(s)-\overline{y}\right\|^2\diff s &\leqslant\frac{2C}{h}\int_t^{t+h}e^{-2\mu s}\diff s\nonumber\\
        &\leqslant \frac{2C}{h}\int_t^{t+h}e^{-2\mu t}\diff s\nonumber\\
        &=2C e^{-2\mu t}.
    \end{align} 
   By the Lebesgue differentiation theorem, using \eqref{eq: last.before.lebesgue.2} we deduce that
    \begin{equation*}
        \left\|u_T(t)\right\|=\lim_{h\searrow0}\left(\frac{1}{h}\int_t^{t+h}\left\|u_T(s)\right\|^2\diff s\right)^{\frac{1}{2}}\leqslant 2Ce^{-\mu t},
    \end{equation*}
    for a.e. $t\in(0,T)$, as desired.  This concludes the proof.
    \end{proof}
    
    	\section{Proof of \Cref{thm: turnpike.wave}} \label{sec: turnpike.wave.proof}
	
	\noindent
	In this section, we provide details of the proof of \Cref{thm: turnpike.wave}. 
	The proof of \Cref{cor: stabilisation.wave} follows by repeating the proof of \Cref{cor: stabilisation} in the appropriate functional setting, so we omit it.
	
	\begin{proof}[Proof of \Cref{thm: turnpike.wave}]
	Once \eqref{eq: nonlinear.wave.control} is written as a first order evolution equation set in $X:=H^1_0(\Omega)\times L^2(\Omega)$ (see the proof of \Cref{lem: C2.wave} for this setup), the only noticeable difference in the proof of \Cref{thm: turnpike.wave} with respect to the proof of \Cref{thm: turnpike} are the specific "quasi-turnpike" lemmas one applies in the preparation (\Cref{lem: quasiturnpike.wave.1} in Part 1 \& Step 1 of Part 2) and bootstrap (\Cref{lem: quasiturnpike.2.wave} in Step 2). 
	So one simply repeats the proof of \Cref{thm: turnpike} whilst applying \Cref{lem: quasiturnpike.wave.1}, \Cref{lem: quasiturnpike.2.wave} and \Cref{lem: D1} with $X$ as above. 
	Whence, the proof follows from these two lemmas, stated and proven just below.
	\end{proof}
	
	\begin{lemma} \label{lem: quasiturnpike.wave.1}
	Let $\*y^0 = (y^0_1, y^0_2) \in H^1_0(\Omega) \times L^2(\Omega)$ be given.
	Let $T>0$ be fixed, and let $u_T \in L^2((0,T)\times\omega)$ be a 
	global minimizer to $J_T$ defined in \eqref{eq: J_T.wave}, with $y_T$ denoting the associated solution to \eqref{eq: nonlinear.wave.control}.
	Then, there exists a constant $C = C(f, \phi, \omega, \Omega,\overline{y}, \*y^0)>0$ independent of $T>0$ such that 
	\begin{equation*} \label{eq: quasiturnpike.wave.1}
	J_T(u_T) + \left\|y_T(t)-\overline{y}\right\|_{H^1_0(\Omega)}^2 + \|\del_t y_T(t)\|_{L^2(\Omega)}^2 \leqslant C
	\end{equation*}
	holds for all $t \in [0, T]$. 
	\end{lemma}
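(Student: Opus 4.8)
The plan is to mimic the proof of \Cref{lem: quasiturnpike.1}, replacing the finite-dimensional controllability assumption by the exact controllability of the semilinear wave equation \eqref{eq: nonlinear.wave.control} in any time $T_0>T_{\min}(\omega,\Omega)$, which holds under the geometric condition \eqref{eq: omega.def} precisely because $f\in\Lip(\R)$ (as recalled above \Cref{thm: turnpike.wave}). Write \eqref{eq: nonlinear.wave.control} as a first-order system for $\*y=(y,\del_t y)^\top$ in the energy space $X=H^1_0(\Omega)\times L^2(\Omega)$, as in the proof of \Cref{lem: C2.wave}, and note that $\overline{\*y}=(\overline{y},0)^\top$ is a stationary solution of the uncontrolled dynamics, since $-\Delta\overline{y}+f(\overline{y})=0$ by \eqref{eq: steady.state.heat}.

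First I would treat the case $T\geq T_0$. By exact controllability, fix a control $u^{\dagger}\in L^2((0,T_0)\times\omega)$ whose associated finite-energy solution $\*y^{\dagger}\in C^0([0,T_0];X)$ satisfies $\*y^{\dagger}(0)=\*y^0$ and $\*y^{\dagger}(T_0)=\overline{\*y}$. Define $u^{\aux}$ to equal $u^{\dagger}$ on $(0,T_0)\times\omega$ and $0$ on $(T_0,T)\times\omega$, and let $\*y^{\aux}$ be the corresponding solution with $\*y^{\aux}(0)=\*y^0$. By uniqueness of finite-energy solutions and the stationarity of $\overline{\*y}$, we have $\*y^{\aux}(t)=\overline{\*y}$, i.e. $(y^{\aux}(t),\del_t y^{\aux}(t))=(\overline{y},0)$, for every $t\in[T_0,T]$. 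Consequently, using $\phi\geq 0$ and the optimality inequality $J_T(u_T)\leq J_T(u^{\aux})$,
\begin{align*}
&\int_0^T \|y_T(t)-\overline{y}\|_{H^1_0(\Omega)}^2\diff t + \int_0^T \|\del_t y_T(t)\|_{L^2(\Omega)}^2\diff t + \int_0^T\|u_T(t)\|_{L^2(\omega)}^2\diff t \\
&\quad\leq \phi(\overline{y}) + \int_0^{T_0}\Big(\|y^{\dagger}(t)-\overline{y}\|_{H^1_0(\Omega)}^2 + \|\del_t y^{\dagger}(t)\|_{L^2(\Omega)}^2\Big)\diff t + \|u^{\dagger}\|_{L^2((0,T_0)\times\omega)}^2,
\end{align*}
and the right-hand side is a finite constant depending only on $f,\phi,\omega,\Omega,\overline{y},\*y^0$ (through the fixed control $u^{\dagger}$ and its finite-energy trajectory), and in particular independent of $T$. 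This yields the bound on $J_T(u_T)$, as well as uniform bounds on $\|y_T-\overline{y}\|_{L^2(0,T;H^1_0(\Omega))}$, $\|\del_t y_T\|_{L^2(0,T;L^2(\Omega))}$ and $\|u_T\|_{L^2((0,T)\times\omega)}$. Plugging these three uniform bounds into \Cref{lem: C2.wave} gives the pointwise estimate $\|y_T(t)-\overline{y}\|_{H^1_0(\Omega)}^2+\|\del_t y_T(t)\|_{L^2(\Omega)}^2\leq C$ for all $t\in[0,T]$.

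For the remaining range $T<T_0$, I would argue exactly as in the second half of the proof of \Cref{lem: quasiturnpike.1}: using $\phi\geq 0$ and the optimality inequality $J_T(u_T)\leq J_T\big(u_{T_0+1}|_{(0,T)}\big)$ for a minimizer $u_{T_0+1}$ of $J_{T_0+1}$; the previous case applied to $J_{T_0+1}$ bounds $\*y_{T_0+1}$ uniformly on $[0,T_0+1]$, so that continuity of $\phi$ on $L^2(\Omega)$ together with $T<T_0$ bounds the right-hand side by a constant independent of $T$, and one concludes once more via \Cref{lem: C2.wave}. The only genuine departure from the ODE argument — and the main point requiring care — is the regularity bookkeeping: one must know that the control $u^{\dagger}$ delivered by the controllability theorem produces a bona fide finite-energy solution $\*y^{\dagger}\in C^0([0,T_0];X)$, so that the integral $\int_0^{T_0}(\|y^{\dagger}-\overline{y}\|_{H^1_0(\Omega)}^2+\|\del_t y^{\dagger}\|_{L^2(\Omega)}^2)\diff t$ is finite, and that the concatenation $u^{\aux}$ yields a solution continuous across $t=T_0$ which coincides with $\overline{\*y}$ thereafter. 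Both are standard consequences of the well-posedness theory recalled after \eqref{eq: nonlinear.wave.control} and of uniqueness of finite-energy solutions, but they are exactly where the $\Lip(\R)$ hypothesis on $f$ enters (ruling out blow-up and underpinning the cited controllability result).
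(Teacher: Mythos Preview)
Your proof is correct and follows essentially the same approach as the paper: construct a quasi-turnpike control $u^{\aux}$ via exact controllability to the steady state $\overline{\*y}$, use the optimality inequality $J_T(u_T)\leq J_T(u^{\aux})$ to bound $J_T(u_T)$ independently of $T$, then invoke \Cref{lem: C2.wave} for the pointwise estimate; the case $T<T_0$ is handled by comparison with a minimizer on a horizon of length $T_0+1$. Your additional remarks on regularity bookkeeping (finite-energy solutions, continuity across $t=T_0$) are accurate and in fact make explicit a point the paper leaves implicit.
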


	\begin{proof}[Proof of \Cref{lem: quasiturnpike.wave.1}]
	The proof follows the lines of that of \Cref{lem: quasiturnpike.1}, simply adapted to the PDE setting. Fix $T_0:=T_{\min}(\omega,\Omega)+1$ where $T_{\min}(\omega, \Omega)>0$ is the minimal controllability time for the semilinear wave equation, defined in \eqref{eq: Tmin}.
	\smallskip
	
	\noindent
	\textit{Case 1).}
	We begin by considering the case $T>T_0$. By controllability, we know that exists some control $u^{\dagger}\in L^2((0, T_0)\times\omega)$ such that the corresponding solution $y^{\dagger}$ to
	\begin{equation*}
	\begin{dcases}
	\del_t^2 y^\dagger - \Delta y^\dagger + f(y^\dagger) = u^\dagger\one_\omega &\text{ in }(0,T_0)\times\Omega\\
	y^\dagger = 0 &\text{ on }(0,T_0)\times\del\Omega\\
	(y^\dagger, \del_t y^\dagger)|_{t=0} = \*y^0 &\text{ in }\Omega.
	\end{dcases}
	\end{equation*}
	satisfies $y^{\dagger}(T_0) = \overline{y}$ and $\del_t y^\dagger(T_0) = 0$ (in $L^2(\Omega)$, and thus a.e.).
	Now set
	\begin{equation*}
	u^\aux(t):= 
	\begin{dcases}
	u^{\dagger}(t) &\text{ in }(0, T_0) \\
	0 &\text{ in }(T_0, T)
	\end{dcases}
	\end{equation*}
	and let $y^\aux$ be the corresponding solution to \eqref{eq: nonlinear.wave.control}.
	Clearly 
	\begin{equation*}
	y^\aux(t) = \overline{y} \hspace{0.25cm} \text{ and } \hspace{0.25cm} \del_t y^\aux(t) = 0 \hspace{0.5cm} \text{ for } t \in [T_0, T].
	\end{equation*}
	Combining this fact with $J_T(u_T) \leqslant J_T(u^\aux)$, we see that
	\begin{align*}
	J_T(u_T) \leqslant \phi(\overline{y}) + \left\|y^\dagger-\overline{y}\right\|_{L^2(0,T_0;H^1_0(\Omega))}^2 + \left\|\del_t y^\dagger\right\|_{L^2((0,T_0)\times\Omega)}^2+ \left\|u^\dagger\right\|_{L^2((0, T_0)\times\omega)}^2.
	\end{align*}
	As the right-hand side in the above inequality is clearly independent of $T$, we conclude by applying \Cref{lem: C2.wave}. 
	\smallskip
	
	\noindent
	\textit{Case 2).} 
        Now suppose that $T\leqslant T_0$. We use $J_T(u_T) \leqslant J_T(u_{T_0})$ to obtain
        \begin{align*} 
       J_T(u_T)\leqslant\phi\left(y_{T_0}(T)\right)+\left\|y_{T_0} - \overline{y}\right\|_{L^2(0,T;H^1_0(\Omega))}^2+\left\|\del_t y_{T_0}\right\|^2_{L^2((0,T)\times\Omega)} + \|u_{T_0}\|_{L^2((0,T)\times\omega)}^2.
        \end{align*} 
        Now $y_{T_0}\in C^0([0,T_{0}]; L^2(\Omega))$ is bounded uniformly with respect to $T\in[0,T_0]$.
        Hence, using the fact that $\phi \in \mathscr{L}(L^2(\Omega); \R_+)$ and $T\leqslant T_0$, we deduce that
        \begin{equation} \label{eq: TleqT0.wave}
        J_T(u_T)\leqslant C
        \end{equation}
        for some $C>0$ independent of $T$.
        Combining \eqref{eq: TleqT0.wave} with \Cref{lem: C2.wave} allows us to conclude.     
	\end{proof}
	
	\noindent
	Since \eqref{eq: nonlinear.wave.control} is a Lipschitz perturbation of an exactly controllable linear system, the following claim holds.
	
	\begin{claim}[Cost estimate] \label{def: ctrl.wave}
	Let $T_0> T_{\min}(\omega, \Omega)$, where $T_{\min}(\omega, \Omega)>0$ is defined in \eqref{eq: Tmin}.
	There exists $r>0$ and $C=C(T_0, \omega, f)>0$ such that 
	\begin{equation*} \label{eq: control.cost.estimate.1.wave}
	\inf_{\substack{u \\ \text{ such that } \\ (y, \del_t y)|_{t=0} = \*y^0 \\ \text{ and } \\ (y, \del_t y)|_{t=T_0} = (\overline{y}, 0)} } \|u\|_{L^2((0,T_0)\times\omega)}^2\leqslant C\left( \left\|y^0_1-\overline{y}\right\|^2_{H^1_0(\Omega)} + \left\|y^0_2\right\|_{L^2(\Omega)}^2\right),
	\end{equation*}
	and
	\begin{equation*} \label{eq: control.cost.estimate.2.wave}
	\inf_{\substack{u \\ \text{ such that } \\ (y, \del_t y)|_{t=0} = (\overline{y}, 0) \\ \text{ and } \\  (y, \del_t y)|_{t=T_0} = \*y^1} } \|u\|_{L^2((0,T_0)\times\omega)}^2\leqslant C\left( \left\|y^1_1-\overline{y}\right\|^2_{H^1_0(\Omega)} + \left\|y^1_2\right\|^2_{L^2(\Omega)}\right),
	\end{equation*}
	hold for any $\*y^0 = \left(y_1^0, y_2^0\right)$ and $\*y^1 = \left(y_1^1, y_2^1\right)$ such that
	\begin{equation*}
	\*y^0, \*y^1 \in \left\{\begin{bmatrix}y_1 \\y_2 \end{bmatrix} \in H^1_0(\Omega)\times L^2(\Omega) \colon \left\|\begin{bmatrix}y_1\\y_2\end{bmatrix}-\begin{bmatrix}\overline{y}\\0\end{bmatrix}\right\|_{H^1_0(\Omega)\times L^2(\Omega)}\leqslant r\right\},
	\end{equation*}
	 where $y$ solves \eqref{eq: nonlinear.wave.control} and $\overline{y} \in H^1_0(\Omega)$ is fixed as in \eqref{eq: steady.state.heat}.
	\end{claim}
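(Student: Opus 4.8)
The plan is to reduce the two infima to controllability cost estimates for the semilinear wave equation with a nonlinearity \emph{vanishing at the origin}, and then to treat that equation as a globally Lipschitz perturbation of the linear wave equation, in the spirit of \cite{zuazua1993exact, fu2007exact, zhang2004exact} and of \Cref{rem: later.on}, while tracking the dependence of the control cost on the norm of the data. \textbf{Step 1 (reduction).} Set $z := y - \overline{y}$. Since $\overline{y}$ solves \eqref{eq: steady.state.heat}, a pair $(y, \del_t y)$ solves \eqref{eq: nonlinear.wave.control} with control $u$ if and only if $(z, \del_t z)$ solves $\del_t^2 z - \Delta z + g(x, z) = u\one_\omega$ with homogeneous Dirichlet data, where $g(x,s) := f(s+\overline{y}(x)) - f(\overline{y}(x))$ is globally Lipschitz in $s$ uniformly in $x$ with constant $L := \Lip(f)$ and $g(x, 0) = 0$; moreover $\|(z, \del_t z)|_{t=s}\|_X = \|(y, \del_t y)|_{t=s} - (\overline{y}, 0)\|_X$. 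Hence the first (resp. second) infimum in the statement equals the minimal $L^2((0,T_0)\times\omega)$--cost of steering the $z$--equation from $\*z^0 := \*y^0 - (\overline{y},0)$ to $(0,0)$ (resp. from $(0,0)$ to $\*z^1 := \*y^1 - (\overline{y},0)$), and $\|\*z^0\|_X, \|\*z^1\|_X \le r$.

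\textbf{Step 2 (uniform linear cost estimate).} Write $g(x,s) = b(x,s)\,s$, with $b(x,s) := g(x,s)/s$ for $s\neq 0$ and $b(x,0):=0$, so $\|b\|_{L^\infty}\le L$ and the $z$--equation reads $\del_t^2 z - \Delta z + b(x,z)\,z = u\one_\omega$. The key ingredient is the \emph{uniform-in-potential} cost estimate for the linear wave equation: by the global Carleman (or multiplier) estimates underlying \cite{fu2007exact, zhang2004exact} (see also \cite{zuazua1993exact}), for $\omega$ as in \eqref{eq: omega.def} and any $T_0 > T_{\min}(\Omega,\omega)$ there is $C_0 = C_0(\Omega,\omega,L,T_0)>0$, \emph{independent of the potential}, such that for every $a \in L^\infty((0,T_0)\times\Omega)$ with $\|a\|_{L^\infty}\le L$ and all $\*v^0, \*v^1 \in X$ there exists $\eta \in L^2((0,T_0)\times\omega)$ steering $\del_t^2 w - \Delta w + a\,w = \eta\one_\omega$ from $\*v^0$ to $\*v^1$ with $\|\eta\|_{L^2((0,T_0)\times\omega)} \le C_0(\|\*v^0\|_X + \|\*v^1\|_X)$; one takes $\eta$ of minimal $L^2$--norm, whose size is controlled by the reciprocal of the uniform observability constant, exactly as recalled for the potential-free case in \cite[Remark 2.2]{zuazua2007controllability}.

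\textbf{Step 3 (fixed point and conclusion).} Fix $\*z^0, \*z^1 \in X$ with $\|\*z^0\|_X, \|\*z^1\|_X \le r$. For $\zeta \in C^0([0,T_0];L^2(\Omega))$ set $a_\zeta(t,x):=b(x,\zeta(t,x))$ (measurable, $\|a_\zeta\|_{L^\infty}\le L$), let $\eta_\zeta$ be the control from Step 2 steering $\del_t^2 w - \Delta w + a_\zeta w = \eta\one_\omega$ from $\*z^0$ to $\*z^1$, let $w_\zeta$ be the associated solution, and put $\Phi(\zeta) := w_\zeta$. By Step 2, $\|\eta_\zeta\|_{L^2((0,T_0)\times\omega)}\le C_0(\|\*z^0\|_X + \|\*z^1\|_X)$ uniformly in $\zeta$; a Gr\"onwall estimate for the energy of $w_\zeta$ (using $\|a_\zeta\|_{L^\infty}\le L$ and the Poincaré inequality) then gives $\|w_\zeta\|_{C^0([0,T_0];X)} \le C_1(L,T_0)(\|\*z^0\|_X + \|\*z^1\|_X) =: R$, so $\Phi$ maps the closed ball of radius $R$ of $C^0([0,T_0];L^2(\Omega))$ into itself; that ball is carried into a bounded subset of $C^0([0,T_0];H^1_0(\Omega))\cap C^1([0,T_0];L^2(\Omega))$, hence into a precompact subset of $C^0([0,T_0];L^2(\Omega))$ (Rellich together with uniform equicontinuity in time), and $\Phi$ is continuous. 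Schauder's fixed point theorem yields $z = \Phi(z)$; since $b(x,z)\,z = g(x,z)$ pointwise, $z$ solves $\del_t^2 z - \Delta z + g(x,z) = \eta_z\one_\omega$, is steered from $\*z^0$ to $\*z^1$, and $\|\eta_z\|_{L^2((0,T_0)\times\omega)}^2 \le 2C_0^2(\|\*z^0\|_X^2 + \|\*z^1\|_X^2)$. Translating back via $y = z + \overline{y}$ and taking $\*z^1 = 0$ (resp. $\*z^0 = 0$) gives the two claimed inequalities with $C := 2C_0^2$; as $f$ is globally Lipschitz the argument works for arbitrary data, so $r>0$ may in fact be taken arbitrary.

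\textbf{Main obstacle.} The substantive point is \textbf{Step 2}: the observability/controllability of the linear wave equation \emph{uniformly} over all potentials with $\|a\|_{L^\infty}\le L$, which is precisely what the Carleman estimates of \cite{fu2007exact, zhang2004exact} deliver, and is also what may force $T_0$ to be larger than the potential-free controllability time. A secondary technical point is the continuity of $\Phi$ in Step 3, since $b(x,\cdot)$ need not be continuous at $0$; this is handled, as in the cited works, by first establishing the estimate for smooth $f$ with uniform Lipschitz constant and then passing to the limit. The remaining ingredients are routine energy estimates.
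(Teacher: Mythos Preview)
Your proposal is correct and follows exactly the approach the paper indicates. The paper does not give a detailed proof of this claim; it simply asserts, just before the statement, that ``since $f\in\Lip(\R)$, \ldots\ \eqref{eq: nonlinear.wave.control} is a Lipschitz perturbation of an exactly controllable linear system, the following claim holds,'' deferring to \cite{fu2007exact, zhang2004exact, zuazua1993exact} and to the heuristic in \Cref{rem: later.on}. Your three steps (translation $z=y-\overline{y}$, uniform-in-potential linear controllability cost via Carleman estimates, Schauder fixed point) are precisely the standard mechanism behind those references, so you have faithfully unpacked what the paper leaves implicit; your observation that $r>0$ can be taken arbitrary, since $f$ is globally Lipschitz, is also correct and slightly sharpens the stated claim.
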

	
	\begin{remark}[Regarding \Cref{def: ctrl.wave}]\label{rem: ctrl.wave}
	Let us provide more detail regarding \Cref{def: ctrl.wave}, following \cite{zhang2004exact, zuazua1991exact}, and also the proofs of \cite[Theorem 2.1]{zhang2001exact} and \cite[Theorem 2.2]{fu2007exact}. For showing the controllability of the semilinear wave equation, one typically proceeds by considering
	\begin{equation*}
	\partial_t^2 y-\Delta y + g(\zeta)y = f(0) \hspace{1cm} \text{ in } \Omega\times(0,T),
	\end{equation*}
	where $\zeta\in L^2((0,T)\times\Omega)$ is given, and $g(s)=\frac{f(s)-f(0)}{s}$ for $s\in\mathbb{R}$ is bounded and continuous. It can be shown that the above system is controllable in time $T> T_{\min}(\omega,\Omega)$ with continuous dependence of the minimal $L^2$-norm control with respect to the data and $f(0)$. The result is transferred to the semilinear system by Schauder's fixed point theorem. 
	To have precisely the same estimates as in \Cref{def: ctrl.wave}, namely, to remove the dependence of the minimal $L^2$-norm control with respect to $f(0)$,  we assume that $f(0)=0$. 
	\end{remark}
	
	\noindent
	As in the finite-dimensional case, the second "quasi-turnpike" result is one for an auxiliary control problem with fixed endpoints. 	
	For $0\leqslant \tau_1<\tau_2\leqslant T$ and given $\*y^{\tau_1}, \*y^{\tau_2}\in H^1_0(\Omega)\times L^2(\Omega)$, this auxiliary problem consists in minimizing the nonnegative functional
	\begin{equation} \label{eq: J_T-aux.wave}
	J_{\tau_1, \tau_2}(u) := \int_{\tau_1}^{\tau_2} \|y(t)-\overline{y}\|^2_{H^1_0(\Omega)}\diff t + \int_{\tau_1}^{\tau_2} \|\del_t y(t)\|_{L^2(\Omega)}^2 + \int_{\tau_1}^{\tau_2} \|u(t)\|^2_{L^2(\omega)} \diff t
	\end{equation}
	over all $u \in U_\ad$, where $y \in C^0([\tau_1,\tau_2]; H^1_0(\Omega))\cap C^1([\tau_1, \tau_2]; L^2(\Omega))$ denotes the unique solution to
	\begin{equation} \label{eq: system.tau1.tau2.wave}
	\begin{dcases}
	\del_t^2 y- \Delta y + f(y) = u\one_\omega &\text{ in }(\tau_1, \tau_2)\times\Omega\\
	y = 0 &\text{ on }(\tau_1, \tau_2)\times\del\Omega \\
	(y, \del_t y)|_{t=\tau_1}=\*y^{\tau_1} &\text{ in }\Omega.
	\end{dcases}
	\end{equation}
	and where
	\begin{equation*}
	U_\ad := \Big\{ u \in L^2((\tau_1, \tau_2)\times\omega) \colon (y, \del_t y)|_{t=\tau_2} = \*y^{\tau_2}\Big\}.
	\end{equation*} 
	We recall that $f \in \Lip(\R)$.
	We now state and prove the wave equation analog of \Cref{lem: unif time estimates}, which we recall, is the cornerstone of the bootstrap argument in our turnpike proof.
	
	\begin{lemma} \label{lem: quasiturnpike.2.wave} 
	Fix $T_0>T_{\min}(\omega,\Omega)$. 
	Suppose $T>0$ and $0\leqslant\tau_1<\tau_2\leqslant T$ are fixed such that $\tau_2-\tau_1\geqslant2T_0$. Let $r>0$ be as in \Cref{def: ctrl.wave}, and let $\*y^{\tau_1},\*y^{\tau_2}$ be such that 
	\begin{equation*}
	\*y^{\tau_i}\in\left\{\begin{bmatrix}y_1\\y_2\end{bmatrix} \in H^1_0(\Omega)\times L^2(\Omega) \, \colon \, \left\|\begin{bmatrix}y_1\\y_2\end{bmatrix}-\begin{bmatrix}\overline{y} \\0\end{bmatrix}\right\|_{H^1_0(\Omega)\times L^2(\Omega)}\leqslant r\right\}
	\end{equation*}	
	for $i=1, 2$.
	Let $u_T \in U_\ad$ be a global minimizer to $J_{\tau_1, \tau_2}$ defined in \eqref{eq: J_T-aux.wave}, with $y_T$ denoting the associated solution to \eqref{eq: system.tau1.tau2.wave}. 
	Then, there exists $C = C(f,T_0,\Omega,\omega)>0$ independent of $T,\tau_1,\tau_2,\*y^{\tau_i}, r$, such that 	
	\begin{align*}
	J_{\tau_1, \tau_2}(u_T) &+ \left\|y_T(t)-\overline{y}\right\|^2_{H^1_0(\Omega)} + \|\del_t y_T(t)\|^2_{L^2(\Omega)}  \\
	&\leqslant C \left(\left\|y^{\tau_1}_1-\overline{y}\right\|^2_{H^1_0(\Omega)}+\left\|y^{\tau_1}_2\right\|_{L^2(\Omega)}^2+\left\|y^{\tau_2}_1-\overline{y}\right\|^2_{L^2(\Omega)}+\left\|y^{\tau_2}_2\right\|_{L^2(\Omega)}^2\right)
	\end{align*}
	holds for all $t \in [\tau_1, \tau_2]$. 
	\end{lemma}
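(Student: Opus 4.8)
The plan is to transcribe the proof of \Cref{lem: unif time estimates} into the PDE setting, working in the energy space $X := H^1_0(\Omega)\times L^2(\Omega)$ and writing \eqref{eq: system.tau1.tau2.wave} as a first-order evolution equation generated by the skew-adjoint operator $A$ introduced in the proof of \Cref{lem: C2.wave}. As there, set $\overline{\*y} := \begin{bmatrix}\overline{y}\\0\end{bmatrix}$; since $\overline{y}$ solves \eqref{eq: steady.state.heat}, the pair $\overline{\*y}$ is an uncontrolled equilibrium of \eqref{eq: nonlinear.wave.control}, so by uniqueness the solution of \eqref{eq: nonlinear.wave.control} issued from $\overline{\*y}$ with $u \equiv 0$ stays equal to $\overline{\*y}$. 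Throughout, $T_0 > T_{\min}(\Omega,\omega)$ and $r>0$ are those furnished by \Cref{def: ctrl.wave}.

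First I would build the suboptimal quasi-turnpike control. By the first inequality of \Cref{def: ctrl.wave}, applied on $(\tau_1,\tau_1+T_0)$, there is $u^\dagger \in L^2((\tau_1,\tau_1+T_0)\times\omega)$ whose trajectory $\*y^\dagger$ to \eqref{eq: system.tau1.tau2.wave} with $\*y^\dagger(\tau_1) = \*y^{\tau_1}$ reaches $\overline{\*y}$ at time $\tau_1+T_0$ and has $\|u^\dagger\|_{L^2((\tau_1,\tau_1+T_0)\times\omega)}^2 \leq C\|\*y^{\tau_1}-\overline{\*y}\|_X^2$; by the second inequality there is $u^\ddagger$, on a length-$T_0$ interval, steering $\overline{\*y}$ to $\*y^{\tau_2}$, with $\|u^\ddagger\|_{L^2}^2 \leq C\|\*y^{\tau_2}-\overline{\*y}\|_X^2$. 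I then set
\[
u^{\aux}(t) := \begin{dcases} u^\dagger(t) & \text{ in } (\tau_1,\tau_1+T_0) \\ 0 & \text{ in } (\tau_1+T_0,\tau_2-T_0) \\ u^\ddagger\big(t-(\tau_2-\tau_1-T_0)\big) & \text{ in } (\tau_2-T_0,\tau_2), \end{dcases}
\]
which is admissible: $\tau_2-\tau_1 \geq 2T_0$ makes the middle interval well-defined, and on it the state equals $\overline{\*y}$ by the equilibrium remark; hence $J_{\tau_1,\tau_2}(u_T) \leq J_{\tau_1,\tau_2}(u^{\aux})$.

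Next I would estimate the state on the two transition intervals. On $(\tau_1,\tau_1+T_0)$, writing the Duhamel formula \eqref{eq: duhamel.wave} for $\*y^\dagger - \overline{\*y}$ and using the conservativity $\|e^{sA}g\|_X = \|g\|_X$, the global Lipschitz character of $f$, the Poincar\'e inequality and Gr\"onwall's inequality exactly as in \Cref{lem: C2.wave}, one gets, for all $t$ in that interval,
\[
\|y^\dagger(t)-\overline{y}\|_{H^1_0(\Omega)} + \|\del_t y^\dagger(t)\|_{L^2(\Omega)} \leq C\Big(\|\*y^{\tau_1}-\overline{\*y}\|_X + \|u^\dagger\|_{L^2((\tau_1,\tau_1+T_0)\times\omega)}\Big),
\]
with $C = C(f,\Omega)$ independent of $T,\tau_1,\tau_2$ because this interval has the fixed length $T_0$; combined with the cost bound for $u^\dagger$ this controls the left-hand side by $\|\*y^{\tau_1}-\overline{\*y}\|_X$ alone. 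The symmetric estimate holds on $(\tau_2-T_0,\tau_2)$ for $\*y^\ddagger$ (after the time shift), and on the middle interval there is nothing to estimate. Evaluating $J_{\tau_1,\tau_2}$ at $u^{\aux}$ — two state-tracking integrals over length-$T_0$ intervals plus the two cost estimates — therefore yields
\[
J_{\tau_1,\tau_2}(u_T) \leq J_{\tau_1,\tau_2}(u^{\aux}) \leq C\Big(\|\*y^{\tau_1}-\overline{\*y}\|_X^2 + \|\*y^{\tau_2}-\overline{\*y}\|_X^2\Big),
\]
with $C$ independent of $T,\tau_1,\tau_2$. This gives the $J_{\tau_1,\tau_2}(u_T)$ part of the claim and, in particular, uniform bounds on $\|u_T\|_{L^2((\tau_1,\tau_2)\times\omega)}$, $\|y_T-\overline{y}\|_{L^2(\tau_1,\tau_2;H^1_0(\Omega))}$ and $\|\del_t y_T\|_{L^2(\tau_1,\tau_2;L^2(\Omega))}$. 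Feeding these, together with $\|\*y^{\tau_1}-\overline{\*y}\|_X \leq r$, into \Cref{lem: C2.wave} — which, after a time translation, applies on $[\tau_1,\tau_2]$ with initial datum $\*y^{\tau_1}$ — gives the pointwise bound on $\|y_T(t)-\overline{y}\|_{H^1_0(\Omega)} + \|\del_t y_T(t)\|_{L^2(\Omega)}$; squaring and recombining concludes.

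The only non-routine point is keeping every constant independent of $T,\tau_1,\tau_2$, and this rests on three features of the setting: the transition intervals have the fixed length $T_0$; the wave semigroup is norm-preserving, so no factor growing in $T$ enters from the free flow; and $f$ enters additively and is globally Lipschitz while the control enters linearly through $u\one_\omega$, so the Gr\"onwall constant depends only on the Lipschitz constant of $f$, on $\Omega$ and on $T_0$ — which is in fact cleaner than the multiplicative-control case of \Cref{lem: unif time estimates}, where that constant depended on $\|u\|_{L^2}$ and had to be absorbed afterwards. A minor additional point, invoking the (standard) uniqueness theorem for \eqref{eq: nonlinear.wave.control}, is the assertion that the zero-control flow from $\overline{\*y}$ stays at $\overline{\*y}$, which is what makes the middle interval contribute nothing to $J_{\tau_1,\tau_2}(u^{\aux})$.
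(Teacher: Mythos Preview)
Your proposal is correct and follows essentially the same approach as the paper: construct the quasi-turnpike control $u^{\aux}$ via \Cref{def: ctrl.wave}, bound the state on the two length-$T_0$ transition intervals by a Duhamel--Gr\"onwall argument using conservativity of the wave group and the Lipschitz character of $f$, compare $J_{\tau_1,\tau_2}(u_T)$ to $J_{\tau_1,\tau_2}(u^{\aux})$, and finish with \Cref{lem: C2.wave}. Your remark that the additive-control case is cleaner than the multiplicative one in \Cref{lem: unif time estimates} is exactly on point.
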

	
	\begin{proof}[Proof of \Cref{lem: quasiturnpike.2.wave}]
	
	The proof follows the lines of that of \Cref{lem: unif time estimates}, with some slight technical differences. We provide details for the sake of completeness.
	For notational purposes, it will be significantly simpler to operate in the canonical first order system framework presented in the proof of \Cref{lem: C2.wave}.
	For the same reason, we will also drop the subscripts of $T$.
	We set $X:=H^1_0(\Omega)\times L^2(\Omega)$, and we denote 
	\begin{equation*}
	\*y:= \begin{bmatrix} y \\ \del_t y \end{bmatrix}, \hspace{1cm} \overline{\*y} := \begin{bmatrix} \overline{y} \\ 0 \end{bmatrix}.
	\end{equation*}
	We also recall the definition of the skew-adjoint operator 
	\begin{equation*}
	A:= \begin{bmatrix} 0 & \text{Id} \\ \Delta & 0 \end{bmatrix}, \hspace{1cm} D(A) = D(\Delta)\times H^1_0(\Omega),
	\end{equation*}
	where $D(\Delta) = H^2(\Omega)\cap H^1_0(\Omega)$.
	Then the desired estimate simply writes as
	\begin{align*}
	J_{\tau_1, \tau_2}(u) &+ \left\|\*y(t)-\overline{\*y}\right\|^2_{X} \leqslant C \left(\left\|\*y^{\tau_1}-\overline{\*y}\right\|^2_{X}+\left\|\*y^{\tau_2}-\overline{\*y}\right\|^2_{X}\right)
	\end{align*}
	for all $t \in [\tau_1, \tau_2]$. We proceed similarly as in the proof of \Cref{lem: unif time estimates}. Using \Cref{def: ctrl.wave}, we know the following.
	\begin{itemize}
	\item There exists a control $u^{\dagger} \in L^2((\tau_1, \tau_1+T_0)\times\omega)$ satisfying
	\begin{equation} \label{eq: first.control.estimate.wave}
	\left\|u^{\dagger}\right\|_{L^2((\tau_1, \tau_1+T_0)\times\omega)}^2 \leqslant C_0 \left\|\*y^{\tau_1}-\overline{\*y}\right\|_{X}^2,
	\end{equation}
	for some $C_0=C_0(T_0,\omega,f)>0$, and such that the corresponding solution $\*y^{\dagger} = \begin{bmatrix}y^\dagger\\\del_t y^\dagger\end{bmatrix}$ to
	\begin{equation*}\label{PC_xdagger.wave}
	\begin{dcases}
	\del_t \*y^\dagger - A\*y^\dagger + \begin{bmatrix} 0 \\ f(y^\dagger)\end{bmatrix} = \begin{bmatrix} 0 \\ u^\dagger\one_\omega\end{bmatrix} &\text{ in } (\tau_1, \tau_1+T_0)\\
	\*y^\dagger|_{t=\tau_1}=\*y^{\tau_1}
	\end{dcases}
	\end{equation*}
	satisfies $\*y^{\dagger}(\tau_1+T_0) = \overline{\*y}$ in $X$. 
	By writing the Duhamel formula for $\*y^\dagger-\overline{\*y}$, and using the conservative character of $\left\{e^{tA}\right\}_{t>0}$ in $X$, the Cauchy-Schwarz inequality, the Lipschitz character of $f$ and the Poincaré inequality, we see that
	\begin{align} \label{eq: aux.quasiturnpike.est.1.wave}
	\left\|\*y^\dagger(t) - \overline{\*y} \right\|_X &\leqslant \left\|e^{tA}(\*y^{\tau_1} -\overline{\*y}) \right\|_{X} + \int_{\tau_1}^t \left\|e^{(t-s)A}\begin{bmatrix}0\\u^\dagger(s)\one_\omega\end{bmatrix}\right\|_{X} \diff s \nonumber\\
	&\quad + \int_{\tau_1}^t \left\|e^{(t-s)A}\begin{bmatrix}0\\\left(f\big(y^\dagger\big)-f(\overline{y})\right)\end{bmatrix}\right\|_{X}\diff s \nonumber\\
	&\leqslant \left\|\*y^{\tau_1} -\overline{\*y} \right\|_{X} + \sqrt{T_0}\left\|u^\dagger\right\|_{L^2((\tau_1, \tau_1+T_0)\times\omega)} \nonumber \\
	&\quad + C(f, \Omega) \int_{\tau_1}^t \left\|\*y^\dagger(s)-\overline{\*y}\right\|_{X}\diff s,
	\end{align}
	with $C(f, \Omega)>0$ depending solely on the Poincaré constant and the Lipschitz constant of $f$.
	Applying the Gr\"onwall inequality to \eqref{eq: aux.quasiturnpike.est.1.wave} and using \eqref{eq: first.control.estimate.wave}, we deduce that
	\begin{align} \label{eq: first.state.estimate.wave}
	\left\|\*y^\dagger(t) - \overline{\*y}\right\|_{X}\leqslant C_1\exp\left(C(f,\Omega)T_0\right)\left\|\*y^{\tau_1} - \overline{\*y}\right\|_{X}
	\end{align}
	holds for some $C_1(f,T_0, \omega)>0$ independent of $T, \tau_1, \tau_2>0$, and for every $t \in (\tau_1, \tau_1+T_0)$.
	
	\item There exists a control  $u^\ddagger \in L^2((\tau_1, \tau_1+T_0)\times\omega)$ satisfying
	\begin{equation} \label{eq: second.control.estimate.wave}
	\left\|u^{\ddagger}\right\|_{L^2((\tau_1, \tau_1+T_0)\times\omega)}^2 \leqslant C_0\left\|\overline{\*y}-\*y^{\tau_2}\right\|^2_{X},
	\end{equation}
	and which is such that the corresponding solution $\*y^\ddagger=\begin{bmatrix}y^\ddagger\\ \del_t y^\ddagger\end{bmatrix}$ to
	\begin{equation*} \label{eq: xddagger.wave}
	\begin{dcases}
	\del_t \*y^\ddagger - A \*y^\ddagger + \begin{bmatrix} 0 \\ f(y^\ddagger)\end{bmatrix} = \begin{bmatrix} 0 \\ u^\ddagger\one_\omega \end{bmatrix} &\text{ in } (\tau_1, \tau_1+T_0) \\
	\*y^\ddagger|_{t=\tau_1} = \overline{\*y}
	\end{dcases}
	\end{equation*}
	satisfies $\*y^\ddagger(\tau_1+T_0) = \*y^{\tau_2}$ in $X$. 
	Arguing just as above, we see that
	\begin{align} \label{eq: aux.quasiturnpike.est.2.wave}
	\left\|\*y^\ddagger(t) - \overline{\*y} \right\|_X &\leqslant \int_{\tau_1}^t \left\|e^{(t-s)A}\begin{bmatrix}0\\u^\ddagger(s)\one_\omega\end{bmatrix}\right\|_{X} \diff s + \int_{\tau_1}^t \left\|e^{(t-s)A}\begin{bmatrix}0\\\left(f\big(y^\ddagger\big)-f(\overline{y})\right)\end{bmatrix}\right\|_{X}\diff s \nonumber\\
	&\leqslant \sqrt{T_0}\left\|u^\ddagger\right\|_{L^2((\tau_1, \tau_1+T_0)\times \omega)}+ C(f,\Omega) \int_{\tau_1}^t \left\|\*y^\ddagger(s)-\overline{\*y}\right\|_{X}\diff s,
	\end{align}
	with $C(f, \Omega)>0$ depending solely on the Poincaré constant and the Lipschitz constant of $f$.
	Applying the Gr\"onwall inequality to \eqref{eq: aux.quasiturnpike.est.2.wave} and using \eqref{eq: second.control.estimate.wave}, we deduce that
	\begin{align} \label{eq: second.state.estimate.wave}
	\left\|\*y^\ddagger(t) - \overline{\*y}\right\|_{X}\leqslant C_2 \exp\left(C(f,\Omega)T_0\right)\left\|\*y^{\tau_2} - \overline{\*y}\right\|_{X}
	\end{align}
	holds for some $C_2(f,T_0,\omega)>0$ independent of $T, \tau_1, \tau_2>0$, and for every $t \in (\tau_1, \tau_1+T_0)$.
	\end{itemize}
	Now set
	\begin{equation*}
	u^\aux(t):= 
	\begin{dcases}
	u^{\dagger}(t) &\text{ in } (\tau_1, \tau_1+T_0) \\
	0 &\text{ in } (\tau_1+T_0, \tau_2-T_0) \\
	u^\ddagger\left(t-(\tau_2-\tau_1-T_0)\right) &\text{ in } (\tau_2-T_0, \tau_2),
	\end{dcases}
	\end{equation*}
	and let $\*y^\aux=\begin{bmatrix}y^\aux\\ \del_t y^\aux\end{bmatrix}$ be the corresponding solution to \eqref{eq: system.tau1.tau2.wave}.
	By construction, we have 
	\begin{equation*}
	\*y^\aux(t) = \*y^\dagger(t) \hspace{1cm} \text{ in } [\tau_1,\tau_1+T_0],
	\end{equation*}
	and thus
	\begin{equation} \label{eq: aux.eq.1.wave}
	\*y^\aux(t) = \overline{\*y} \hspace{1cm} \text{ in } [\tau_1+T_0, \tau_2-T_0], 
	\end{equation}
	whereas we also have $\*y^\aux(\tau_2) = \*y^{\tau_2}$, whence $u^\aux \in U_\ad$.
	We now evaluate $J_{\tau_1,\tau_2}$ at $u^\aux$, which by virtue of a simple change of variable as well as \eqref{eq: aux.eq.1.wave}, \eqref{eq: first.control.estimate.wave}, \eqref{eq: first.state.estimate.wave}, \eqref{eq: second.control.estimate.wave} and \eqref{eq: second.state.estimate.wave}, leads us to
	\begin{align} \label{eq: aux.estimate.1.wave}
	J_{\tau_1, \tau_2}(u^\aux) &= \left\| u^\dagger \right\|_{L^2((\tau_1,\tau_1+T_0)\times\omega)} + \left\| u^\ddagger \right\|_{L^2((\tau_1,\tau_1+T_0)\times\omega)} \nonumber \\
	&\quad+ \int_{\tau_1}^{\tau_1+T_0} \left\|\*y^\dagger(t)-\overline{\*y}\right\|^2_{X} \diff t + \int_{\tau_1}^{\tau_1+T_0} \left\|\*y^\ddagger(t)-\overline{\*y}\right\|^2_{X}\diff t \nonumber \\
	&\leqslant C_{3} \Big(\left\|\overline{\*y}-\*y^{\tau_1}\right\|^2_{X}+\left\|\overline{\*y}-\*y^{\tau_2}\right\|^2_{X}\Big)
	\end{align}
	where $C_{3}(f,T_0,\Omega,\omega)>0$ is independent of $T, \tau_1, \tau_2>0$.
	By virtue of the optimality of $u$ and \eqref{eq: aux.estimate.1.wave}, we have
	\begin{align*}
	J_{\tau_1,\tau_2}\left(u\right) &\leqslant J_{\tau_1, \tau_2}\left(u^\aux\right) \leqslant C_{3} \Big(\left\|\overline{\*y}-\*y^{\tau_1}\right\|^2_{X}+\left\|\overline{\*y}-\*y^{\tau_2}\right\|^2_{X}\Big).
	\end{align*} 
	An application of \Cref{lem: C2.wave} suffices to conclude.
	\end{proof}

	\section{Proof of \Cref{thm: turnpike.heat}} \label{sec: turnpike.heat.proof}
		
	\noindent
	For the semilinear heat equation, we can adapt the proof strategy of \Cref{thm: turnpike} to directly prove the stabilization result stipulated by \Cref{thm: turnpike.heat}. We provide details of the proof, as it is not an immediate application of that of \Cref{thm: turnpike}.
	We recall that since $f\in\Lip(\R)$, as presented in \cite[Lemma 8.3]{pighin2018controllability} (and the references therein), given any $T_0>0$, $y^0\in L^2(\Omega)$ and $\overline{y}\in H^1_0(\Omega)$ solution to \eqref{eq: steady.state.heat}, there exists a control $u\in L^2((0,T_0)\times\omega)$ such that the unique solution $y$ to \eqref{eq: nonlinear.heat.control} satisfies $y(T_0) = \overline{y}$, and
    \begin{equation} \label{eq: cost.control.heat.1}
    \|u\|_{L^2((0,T_0)\times\omega)}\leqslant C(T_0,\omega, f)\left\|y^0 - \overline{y}\right\|_{L^2(\Omega)}
    \end{equation}
    for some $C(T_0, \omega, f)>0$ (the dependence on $f$ is through the Lipschitz constant which is an upper bound for the potential appearing in the associated linear problem).
    Indeed, we may consider $z:=y-\overline{y}$, and the control $u$ steering $z$ to $0$ in time $T$ is the same as that steering $y$ to $\overline{y}$ in time $T$.
    But then, $\|u\|_{L^2((0,T_0)\times\omega)}\leqslant C(T_0, \omega, f)\|z(0)\|_{L^2(\Omega)}$ from the linear system and a fixed-point argument. 
	Let $T>0$ and $0 \leqslant \tau_1 < T$ be fixed, and suppose $y^{\tau_1}\in L^2(\Omega)$ is given. Consider
	\begin{equation} \label{eq: J_T-aux.heat}
	J_{\tau_1, T}(u) := \int_{\tau_1}^T \|y(t) - \overline{y}\|^2_{L^2(\Omega)}\diff t + \int_{\tau_1}^T \|u(t)\|^2_{L^2(\omega)} \diff t,
	\end{equation}
	where $y$ solves
	\begin{equation} \label{eq: heat.tau1}
	\begin{dcases}
	\del_t y - \Delta y + f(y) = u\one_\omega &\text{ in } (\tau_1, T) \times \Omega \\
	y = 0 &\text{ on } (\tau_1, T) \times \del \Omega \\
	y|_{t=\tau_1} = y^{\tau_1} &\text{ in } \Omega.
	\end{dcases}
	\end{equation}
	We will only need the following lemma, which is similar to \Cref{lem: quasiturnpike.2.wave}. In fact, the blueprint of the proof below is contained therein.
	
 	\begin{lemma} \label{lem: quasiturnpike.heat.1}
	Let $T>0$ and $\tau_1\geqslant0$ be given such that $T>\tau_1$, and let $y^{\tau_1}\in L^2(\Omega)$. 
	Let $u_T \in L^2((\tau_1,T)\times\omega)$ be any 
	global minimizer to $J_{\tau_1, T}$ defined in \eqref{eq: J_T-aux.heat}, with $y_T$ denoting the corresponding solution to \eqref{eq: heat.tau1}.
	Then, there exists a constant $C = C(f,\omega)>0$ independent of $T, \tau_1>0$ and $y^{\tau_1}$ such that 
	\begin{equation*} \label{eq: quasiturnpike.heat.1}
	J_{\tau_1, T}(u_T) + \left\|y_T(t)-\overline{y}\right\|_{L^2(\Omega)}^2 \leqslant C \left\|y^{\tau_1}-\overline{y}\right\|^2_{L^2(\Omega)}
	\end{equation*}
	holds for all $t\in[\tau_1,T]$. 
	\end{lemma}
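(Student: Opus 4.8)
The plan is to run the quasi-turnpike argument of \Cref{lem: quasiturnpike.1}, in the simplified form already used for the wave equation in \Cref{lem: quasiturnpike.2.wave}: since the functional $J_{\tau_1,T}$ in \eqref{eq: J_T-aux.heat} carries neither a terminal constraint on $y(T)$ nor a final cost, it suffices to build a suboptimal control that drives $y^{\tau_1}$ to the steady state $\overline{y}$ and then switches off, rather than one joining two prescribed endpoints. Fix once and for all a reference time $T_0>0$; since the semilinear heat equation is controllable to $\overline{y}$ in any positive time with the cost estimate \eqref{eq: cost.control.heat.1}, the choice $T_0=1$ will do, and by autonomy the estimate holds on any translate $(\tau_1,\tau_1+T_0)$ with the same constant $C(T_0,\omega,f)$.

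\emph{Construction of the suboptimal control.} If $T-\tau_1\geq T_0$, pick $u^{\dagger}\in L^2((\tau_1,\tau_1+T_0)\times\omega)$ with $\|u^{\dagger}\|^2_{L^2((\tau_1,\tau_1+T_0)\times\omega)}\leq C(T_0,\omega,f)\|y^{\tau_1}-\overline{y}\|^2_{L^2(\Omega)}$ steering the solution from $y^{\tau_1}$ at time $\tau_1$ to $\overline{y}$ at time $\tau_1+T_0$, and set $u^{\aux}:=u^{\dagger}$ on $(\tau_1,\tau_1+T_0)$ and $u^{\aux}:=0$ on $(\tau_1+T_0,T)$; the associated state then satisfies $y^{\aux}(t)=\overline{y}$ for $t\in[\tau_1+T_0,T]$. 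If instead $T-\tau_1<T_0$, simply take $u^{\aux}\equiv 0$ on $(\tau_1,T)$. In both cases $u^{\aux}$ differs from the identically-zero control only on an interval of length at most $T_0$.

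\emph{Bounding $J_{\tau_1,T}(u^{\aux})$.} On the interval where the control is active I would write the Duhamel formula for $z:=y^{\aux}-\overline{y}$ — using $-\Delta\overline{y}+f(\overline{y})=0$, so that the source reads $u^{\aux}\one_\omega-(f(y^{\aux})-f(\overline{y}))$ — take $L^2(\Omega)$ norms, use $\|e^{t\Delta}\|\leq 1$, the Lipschitz character of $f$, Cauchy--Schwarz and Gr\"onwall, exactly as in the proofs of \Cref{lem: C2} and \Cref{lem: quasiturnpike.2.wave}, to obtain $\|y^{\aux}(t)-\overline{y}\|_{L^2(\Omega)}\leq C(f,T_0,\omega)\|y^{\tau_1}-\overline{y}\|_{L^2(\Omega)}$ there; on $[\tau_1+T_0,T]$ the tracking integrand vanishes. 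Integrating in time over an interval of length $\leq T_0$ and adding the bound on $\|u^{\dagger}\|^2_{L^2}$ gives $J_{\tau_1,T}(u^{\aux})\leq C(f,\overline{y},\omega)\|y^{\tau_1}-\overline{y}\|^2_{L^2(\Omega)}$. By optimality, $J_{\tau_1,T}(u_T)\leq J_{\tau_1,T}(u^{\aux})$, which is the first assertion; in particular $\|u_T\|^2_{L^2((\tau_1,T)\times\omega)}$ and $\|y_T-\overline{y}\|^2_{L^2((\tau_1,T)\times\Omega)}$ are both $\leq C\|y^{\tau_1}-\overline{y}\|^2_{L^2(\Omega)}$.

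\emph{Pointwise bound and main obstacle.} To upgrade these time-integrated bounds to $\|y_T(t)-\overline{y}\|^2_{L^2(\Omega)}\leq C\|y^{\tau_1}-\overline{y}\|^2_{L^2(\Omega)}$ for every $t\in[\tau_1,T]$, I would apply \Cref{lem: C2} after translating the interval $(\tau_1,T)$ to $(0,T-\tau_1)$ — the constant in \Cref{lem: C2} is independent of the time horizon — which estimates $\|y_T(t)-\overline{y}\|_{L^2(\Omega)}$ by $\|y^{\tau_1}-\overline{y}\|_{L^2(\Omega)}+\|u_T\|_{L^2((\tau_1,T)\times\omega)}+\|y_T-\overline{y}\|_{L^2((\tau_1,T)\times\Omega)}$, and the last two terms were just controlled. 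The only genuine subtlety is the regime $T-\tau_1<T_0$, where the controllability construction does not fit inside the time interval; this is dispatched by the trivial control $u^{\aux}\equiv 0$, for which a crude Gr\"onwall estimate on $[\tau_1,T]$ suffices precisely because $T-\tau_1$ is then bounded by the fixed constant $T_0$. Everything else is a transcription of the ODE and wave arguments already carried out, with the contractivity of the heat semigroup replacing the conservativity of the wave group — which in fact removes the exponential dependence on $\|u_T\|_{L^2}$ present in \Cref{lem: C1}.
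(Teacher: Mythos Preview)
Your argument is correct and follows the same quasi-turnpike strategy as the paper: build a suboptimal control that steers $y^{\tau_1}$ to $\overline{y}$ in time $T_0$ and then switches off, bound $J_{\tau_1,T}(u^{\aux})$ via Duhamel and Gr\"onwall using the cost estimate \eqref{eq: cost.control.heat.1}, and upgrade to the pointwise bound by \Cref{lem: C2}. The one place you diverge is the short-horizon case $T-\tau_1<T_0$: the paper compares $u_T$ with the restriction of a minimizer $u_{T_0+\tau_1}$ on the longer interval $(\tau_1,\tau_1+T_0)$ and invokes the long-horizon case, whereas you take $u^{\aux}\equiv 0$ and use a direct Gr\"onwall bound on $\|y^{\aux}(t)-\overline{y}\|_{L^2(\Omega)}$ over an interval of length $\leq T_0$ --- your route is slightly more elementary and avoids introducing an auxiliary minimizer, while the paper's route reuses the already-established long-horizon estimate; both yield the same constant structure.
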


	\begin{proof}[Proof of \Cref{lem: quasiturnpike.heat.1}]
	
	\textit{Case 1).} Let us first suppose that $T\geqslant\tau_1+1$. By controllability to the steady state $\overline{y}$ (see the discussion around \eqref{eq: cost.control.heat.1}), we know that exists a control $u^{\dagger}\in L^2((\tau_1,\tau_1+1)\times\omega)$ satisfying 
        \begin{equation} \label{eq: first.control.estimate.heat}
        \left\|u^\dagger\right\|_{L^2((\tau_1,\tau_1+1)\times\omega)}\leqslant C_1\left\|y^{\tau_1}-\overline{y}\right\|_{L^2(\Omega)}
        \end{equation}
        for some $C_1=C_1(\omega,f)>0$ and such that the corresponding solution $y^{\dagger}$ to
	\begin{equation*}
	\begin{dcases}
	\del_t y^\dagger - \Delta y^\dagger + f(y^\dagger) = u^\dagger\one_\omega &\text{ in }(\tau_1, \tau_1+1)\times\Omega\\
	y^\dagger=0 &\text{ on }(\tau_1,\tau_1+1)\times\del\Omega \\
	y^\dagger|_{t=0}=y^0 &\text{ in }\Omega.
	\end{dcases}
	\end{equation*}
	satisfies $y^{\dagger}(\tau_1+1)=\overline{y}$.
	Arguing as in the proof of \Cref{lem: C2}, we see that
	\begin{align} \label{eq: aux.quasiturnpike.est.1.heat}
	\left\|y^\dagger(t) - \overline{y} \right\|_{L^2(\Omega)} 
	&\leqslant \left\|y^{\tau_1}-\overline{y}\right\|_{L^2(\Omega)}+\left\|u^\dagger\right\|_{L^2((\tau_1,\tau_1+1)\times\omega)}\nonumber \\
	&\quad + C(f)\int_{\tau_1}^t \left\|y^\dagger(s)-\overline{y}\right\|_{L^2(\Omega)}\diff s
	\end{align}
	for $t\in(\tau_1,\tau_1+1)$, with $C(f)>0$ being the Lipschitz constant of $f$.
	Applying the Gr\"onwall inequality to \eqref{eq: aux.quasiturnpike.est.1.heat} and using \eqref{eq: first.control.estimate.heat}, we deduce that
	\begin{align} \label{eq: first.state.estimate.heat}
	\left\|y^\dagger(t) - \overline{y}\right\|_{L^2(\Omega)}\leqslant C_2 \exp\left(C(f)\right)\left\|y^{\tau_1} - \overline{y}\right\|_{L^2(\Omega)}
	\end{align}
	for some $C_2(f,\omega)>0$ independent of $T, \tau_1$, and for every $t\in(\tau_1,\tau_1+1)$.
	Now set
	\begin{equation*}
	u^\aux(t):= 
	\begin{dcases}
	u^{\dagger}(t) &\text{ in } (\tau_1, \tau_1+1) \\
	0 &\text{ in } (\tau_1+1, T)
	\end{dcases}
	\end{equation*}
	and let $y^\aux$ be the corresponding solution to \eqref{eq: nonlinear.heat.control}.
	Clearly 
	\begin{equation*}
	y^\aux(t) = \overline{y} \hspace{1cm} \text{ for } t\in[\tau_1+1, T].
	\end{equation*}
	Hence, using $J_{\tau_1,T}(u_T) \leqslant J_{\tau_1,T}(u^\aux)$, \eqref{eq: first.state.estimate.heat} and \eqref{eq: first.control.estimate.heat}, we see that
	\begin{align*}
	J_{\tau_1,T}(u_T) &\leqslant \left\|y^\dagger-\overline{y}\right\|_{L^2((\tau_1,\tau_1+1)\times\Omega)}^2 + \left\|u^\dagger\right\|_{L^2((\tau_1, \tau_1+1)\times\omega)}^2 \\
	&\leqslant C_3 \left\|y^{\tau_1}-\overline{y}\right\|_{L^2(\Omega)}^2
	\end{align*}
	for some $C_3(f,\omega)>0$ independent of $T, \tau_1>0$. Applying \Cref{lem: C2} suffices to conclude.       
	\smallskip
	
	\noindent
	\textit{Case 2).}
	Now suppose that $\tau_1<T<\tau_1+1$. 
	We may then use the optimality inequality $J_{\tau_1, T}(u_T)\leqslant J_{\tau_1,T}(u_{\tau_1+1})$, as well as $J_{\tau_1,T}(u_{\tau_1+1})\leqslant J_{\tau_1,\tau_1+1}(u_{\tau_1+1})$, and since by the previous step, we know that 
	\begin{equation*}
	J_{\tau_1,\tau_1+1}(u_{\tau_1+1})\leqslant C_3\left\|y^{\tau_1} - \overline{y}\right\|_{L^2(\Omega)}^2,
	\end{equation*}
	where $C_3 = C_3(f,\omega)>0$ is independent of $T, \tau_1$, we deduce
	\begin{equation} \label{eq: TleqT0.heat}
	J_{\tau_1, T}(u_T)\leqslant C_3\left\|y^{\tau_1} - \overline{y}\right\|_{L^2(\Omega)}^2.
	\end{equation}
	We may conclude by combining \eqref{eq: TleqT0.heat} with \Cref{lem: C2}.
	\end{proof}
		
	\begin{proof}[Proof of \Cref{thm: turnpike.heat}]
	The proof is of the same spirit\footnote{Actually, as already commented below the statement of \Cref{cor: stabilisation}, the proof presented below also roughly applies to show \Cref{cor: stabilisation}, where one also would need to account for the constants which should also depend on the radius $r>0$. In fact, just as for the heat equation, one could first adapt \Cref{lem: unif time estimates} to a functional of the form \eqref{eq: J_T-aux.heat}; an adaptation which would hold for initial data in a ball of radius $r>0$ around $\overline{y}$, and then use the global estimate of \Cref{lem: quasiturnpike.1} and argue as in the beginning of the proof of \Cref{thm: turnpike} to fit within this ball, where one bootstraps forward in time only (namely, over intervals of the form $[n\tau,T]$).} as that of \Cref{thm: turnpike}, the only difference being the fact that we only need to bootstrap forward in time due to the lack of final cost, which renders the proof significantly less technical. 
	The control estimate follows from \Cref{lem: quasiturnpike.heat.1}. We thus concentrate solely on estimating the state. Let $T_0>0$ be arbitrary, and fix
	\begin{equation*}
	\tau>C_1^4
	\end{equation*}
	where $C_1 = C_1(f,\omega)>0$ is the (square root of the) constant appearing in \Cref{lem: quasiturnpike.heat.1}. 
	We note that if $T\leqslant2\tau+T_0$, then the desired estimate clearly follows by arguing as in previous proofs. We thus suppose that
	\begin{equation*}
	T>2\tau+T_0
	\end{equation*}
	is fixed. 
	First note that for $t \in [0,\tau+T_0]$, just as in Part 1 of the proof of \Cref{thm: turnpike}, the desired estimate can easily be obtained for such $t$ since the length of the time interval is independent of $T$. 
	Hence, we will solely concentrate on the case $t\in[\tau+T_0, T]$.
	To this end, we will mimic the steps done in the proof of \Cref{thm: turnpike}.
	\smallskip
	
	\noindent
	\textbf{Step 1). Preparation.} Since $2\tau< T$ and thus $\tau\leqslant \frac{T}{2}$, by \Cref{lem: D1} there exists a $\tau_1 \in [0, \tau)$ such that 
	\begin{equation} \label{eq: 5.7}
	\left\|y_T(\tau_1) - \overline{y}\right\|_{L^2(\Omega)}\leqslant\frac{\left\|y_T- \overline{y}\right\|_{L^2((0,T)\times\Omega)}}{\sqrt{\tau}} \leqslant\frac{C_1}{\sqrt{\tau}}\left\|y^0-\overline{y}\right\|_{L^2(\Omega)},
	\end{equation}
	where we used \Cref{lem: quasiturnpike.heat.1} for the second estimate.
	The control $u_T|_{[\tau_1, T]}$ can be shown to minimize $J_{\tau_1, T}$ with initial data $y^{\tau_1}=y_T(\tau_1)$ for \eqref{eq: heat.tau1}, to which clearly the solution is $y_T|_{[\tau_1,T]}$.
	 So by \Cref{lem: quasiturnpike.heat.1} and \eqref{eq: 5.7}, 
	\begin{equation} \label{eq: 5.8}
	\|y_T(t) - \overline{y}\|_{L^2(\Omega)}\leqslant C_1 \|y_T(\tau_1) - \overline{y}\|_{L^2(\Omega)}\leqslant\frac{C_1^2}{\sqrt{\tau}}\left\|y^0-\overline{y}\right\|_{L^2(\Omega)}
	\end{equation}
	holds for all $t \in [\tau_1, T]$. 
	Since $\tau_1<\tau$, \eqref{eq: 5.8} also holds for all $t \in [\tau, T]$.
	\smallskip

	\noindent
	\textbf{Step 2). Bootstrap.} We bootstrap \eqref{eq: 5.8} and prove that
	for any $n\in \N$ satisfying 
	\begin{equation*}
	n \leqslant \frac{T}{2\tau},
	\end{equation*}
	the estimate
	\begin{equation} \label{eq: 5.9}
	\sup_{t \in [n\tau, T]}\|y_T(t) - \overline{y}\|_{L^2(\Omega)}\leqslant\left(\frac{C_1^2}{\sqrt{\tau}}\right)^n \left\|y^0-\overline{y}\right\|_{L^2(\Omega)}
	\end{equation}
	holds. We proceed by induction. The case $n=1$ holds by \eqref{eq: 5.8}. Thus assume that \eqref{eq: 5.9} holds at some stage $n\in \N$ and suppose that
	\begin{equation*}
	n+1 \leqslant \frac{T}{2\tau}.
	\end{equation*}
	This clearly implies that
	\begin{equation} \label{eq: 5.10}
	\tau \leqslant \frac{T-2n\tau}{2}.
	\end{equation}
	The control $u_T|_{[n\tau, T]}$ can again be shown to be a global minimizer of $J_{n\tau, T}$.
	We can thus apply \Cref{lem: quasiturnpike.heat.1} with $\tau_1=n\tau$, and \Cref{lem: D1} (noting \eqref{eq: 5.10}) on $[n\tau, T-n\tau]$, to deduce that there exists $t_1 \in [n\tau, (n+1)\tau)$ such that
	\begin{equation*}
	\|y_T(t_1)-\overline{y}\|_{L^2(\Omega)}\leqslant\frac{\|y_T-\overline{y}\|_{L^2((n\tau,T)\times\Omega)}}{\sqrt{\tau}}\leqslant\frac{C_1}{\sqrt{\tau}} \|y_T(n\tau)-\overline{y}\|_{L^2(\Omega)}.
	\end{equation*}
	We may apply the induction hypothesis \eqref{eq: 5.9} to deduce
	\begin{equation} \label{eq: 5.12}
	\|y_T(t_1)-\overline{y}\|_{L^2(\Omega)}\leqslant\frac{C_1}{\sqrt{\tau}} \left(\frac{C_1^2}{\sqrt{\tau}}\right)^n \left\|y^0-\overline{y}\right\|_{L^2(\Omega)}.
	\end{equation}
	Since $u_T|_{[t_1,T]}$ is a global minimizer of $J_{t_1,T}$, we can apply \Cref{lem: quasiturnpike.heat.1} and use \eqref{eq: 5.12} to deduce that
	\begin{equation} \label{eq: 5.13}
	\|y_T(t)-\overline{y}\|_{L^2(\Omega)}\leqslant C_1 \|y_T(t_1)-\overline{y}\|_{L^2(\Omega)}\leqslant \frac{C_1^2}{\sqrt{\tau}}\left(\frac{C_1^2}{\sqrt{\tau}}\right)^n \left\|y^0-\overline{y}\right\|_{L^2(\Omega)}
	\end{equation}
	holds for all $t \in [t_1,T]$. Clearly, as $t_1< (n+1)\tau$, \eqref{eq: 5.13} also holds for all $t \in [(n+1)\tau, T]$.
	This concludes the induction proof, and so \eqref{eq: 5.9} does indeed hold.
	\smallskip
	
	\noindent
	\textbf{Step 3). Conclusion.}
	We now use \eqref{eq: 5.9} to conclude the proof. Suppose $t \in [\tau+T_0, T]$ is arbitrary and fixed. 
	Set $n(t):=\left\lfloor\frac{t}{\tau+T_0}\right\rfloor$. Clearly $n(t)\geqslant 1$, $t\geqslant n(t)\tau$ and $n(t) \leqslant \frac{T}{2\tau}$ due to the choice of $T_0$. We may then apply \eqref{eq: 5.9} to find that
	\begin{equation} \label{eq: 5.14}
	\|y_T(t) - \overline{y}\|_{L^2(\Omega)} \leqslant \left(\frac{C_1^2}{\sqrt{\tau}}\right)^{n(t)} \left\|y^0-\overline{y}\right\|_{L^2(\Omega)}
	\end{equation}
	Now since $\tau>C_1^4$ and $n(t) \geqslant \frac{t}{\tau+T_0}-1$, we can see from \eqref{eq: 5.14} that
	\begin{align*}
	\left\|y_T(t) - \overline{y}\right\|_{L^2(\Omega)}&\leqslant \exp\left(-n(t)\log\left(\frac{\sqrt{\tau}}{C_1^2}\right)\right)\left\|y^0-\overline{y}\right\|_{L^2(\Omega)} \\
	&\leqslant\frac{\sqrt{\tau}}{C_1^2} \exp\left(-\frac{\log\left(\frac{\sqrt{\tau}}{C_1^2}\right)}{\tau+T_0}t\right)\left\|y^0-\overline{y}\right\|_{L^2(\Omega)}
	\end{align*}
	The desired estimate thus holds for all $t \in [\tau+T_0,T]$, with 
	\begin{equation} \label{eq: heat.mu}
	\mu:= \frac{\log\left(\frac{\sqrt{\tau}}{C_1^2}\right)}{\tau+T_0}>0
	\end{equation}
	and 
	\begin{equation} \label{eq: heat.C}
	C:=\frac{\sqrt{\tau}}{C_1^2}>0.
	\end{equation}
	This concludes the proof.
	\end{proof}
	
	\section{Numerics} \label{sec: numerics}
	
	\noindent
	We briefly comment on the setting of the numerical experiment shown in \Cref{fig: figure.dl}. We make use of the neural ODE \eqref{eq: neural.net.2} with $\sigma\equiv\tanh$, and discretize with an explicit midpoint rule with $\bigtriangleup t = \sfrac12$. 
	The operator $P:\mathbb{R}^3\to[-1,1]$ appearing in \eqref{eq: J_T.deep} is defined as $Px=\texttt{hardtanh}(p_1 x+p_2)$, through the nonlinear thresholding operator $\texttt{hardtanh}(z) = 1_{\{s\geqslant 1\}}(z) + z1_{s\in(-1,1)}(z)+1_{\{s\leqslant1\}}(z)$, and the parameters $p_1\in\mathbb{R}^{3\times 3}$ and $p_2\in\mathbb{R}^3$ which are randomly sampled\footnote{As a byproduct of the Johnson–Lindenstrauss lemma (\cite[Lemma 23.4]{shalev2014understanding}), such random projections are of low distortion with respect to the Euclidean distance, in the sense that distances between points are nearly preserved after projecting.} from a normal distribution. 
	We use $n=2400$ points for training, and $600$ points for testing (see \Cref{fig: train.test}). 
	We originally consider a dataset of points in $\mathbb{R}^2$, but we embed them in $\mathbb{R}^3$ by adding a $0$ to each point. This is to avoid the intersection of trajectories in $\mathbb{R}^2$, which takes place due to  uniqueness (\cite{dupont2019augmented}). This is why we actually plot the predictor as a map $\mathbb{R}^2\to[-1,1]$ in \Cref{fig: train.test}.
	The code for reproducing all figures is available at \href{https://github.com/borjanG/dynamical.systems}{{\color{dukeblue}\texttt{https://github.com/borjanG/dynamical.systems}}}.
	
	\begin{figure}[h!]
	\includegraphics[scale=0.55]{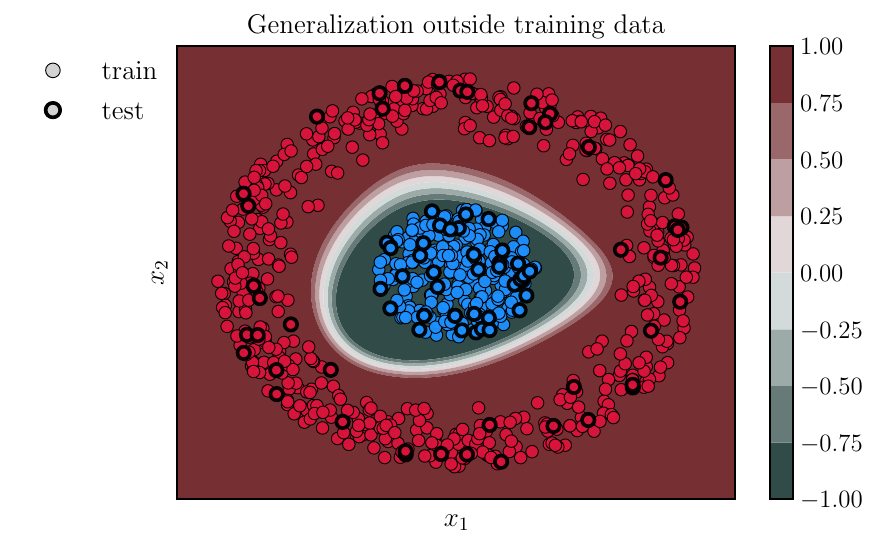}
	\caption{We see that the trained predictor, plotted on $[-2,2]^2$, has generalized the shape of the dataset, as desired.}
	\label{fig: train.test}
	\end{figure}
	
	\begin{figure}[h!]
	\includegraphics[scale=0.45]{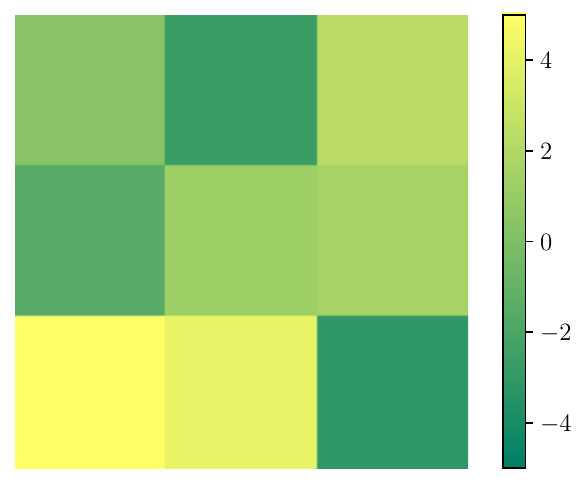}
	\includegraphics[scale=0.45]{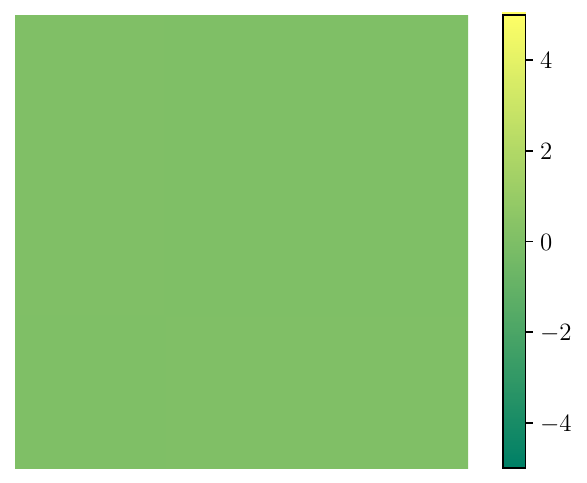}
	\includegraphics[scale=0.45]{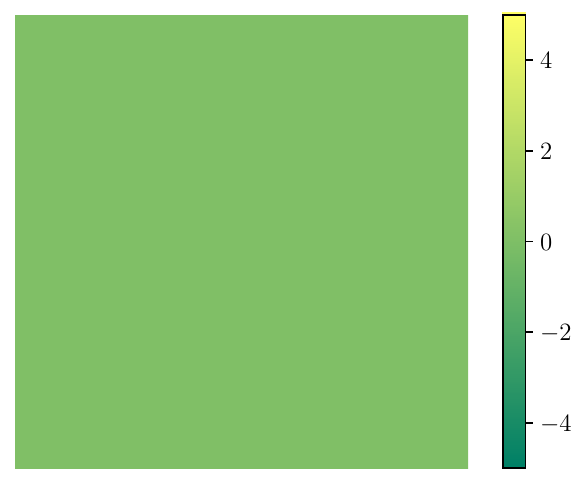}
	\caption{To further corroborate the graph showing the decay of optimal controls $\|u(t)\|$ in \Cref{fig: figure.dl}, where $u(t)=(w(t),b(t))$, we display the values of the matrix $w(t)\in\mathbb{R}^{3\times3}$ in times $t\in\{0.67, 1.33, 5\}$ (left to right).}
	\end{figure}

	\section{Concluding remarks and outlook}
	
	\label{sec: conclusion}
	
	\noindent
	We have presented a new methodology for proving the turnpike property for nonlinear optimal control problems set in large time horizons, under the assumption that the running target is a steady control-state pair, and that the system is controllable with a local estimate on the cost.
	These assumptions allow us to bypass necessary optimality conditions and a study of the adjoint system, and rather relies on calculus of variations--based arguments.		
	More precisely, we have concluded that
	\begin{itemize}
	\item The exponential turnpike property holds for optimal state trajectories of optimal control problems for nonlinear finite and infinite-dimensional dynamics, whenever the cost functional is coercive with respect to the distance of the state to the target steady state. 
	The nonlinearity may be assumed to be only globally Lipschitz continuous (and thus possibly nonsmooth).
	The result holds without any smallness assumptions on the initial data. 
	\smallskip
	\item The last exponential arc (near $t=T$) can be removed whenever the optimal control problem is considered without a final time cost, and thus entails an exponential stabilization estimate for the optimal state trajectory.
	\end{itemize}
	
	\noindent
	The motivation behind the consideration of steady state running targets in \eqref{eq: J_T} was the link with machine learning applications, namely problem \eqref{eq: J_T.deep} (although, we saw that our results also apply to many contexts which arise naturally in mechanics). While we see a turnpike phenomenon in the numerical simulations of \eqref{eq: J_T.deep}, our analysis done for \eqref{eq: J_T} strictly applies to \eqref{eq: J_T.deep} only when $d_x=d_y$ and $P$ is the identity. On another hand, since $P$ is surjective, we can see \eqref{eq: J_T} as a relaxed version of \eqref{eq: J_T.deep}, in which case, we select the running targets in the functional \eqref{eq: J_T} as $\overline{\*x}_i\in P^{-1}\left(\left\{y^{(i)}\right\}\right)$ for $i\in\{1,\ldots,n\}$, which are steady states of the underlying system with $0$ control (as are all constants, actually). 
	The numerical simulations in \Cref{fig: figure.dl} are all the more curious because of the fact that 1). we see the stabilization of the full state, while we solely penalize projections of it, and 2). the projection $x\mapsto Px$ is actually not coercive with respect to $x$ (see \Cref{sec: numerics}). 
	For problems manifesting a lack of observability of certain components of the state in the tracking term, the turnpike property for the observed components and the full controls has been shown in \cite{pighin2020turnpike}, in the setting of linear systems. Should such a property also hold for \eqref{eq: J_T.deep}, then the stability of the full state seen in \Cref{fig: figure.dl} can be explained through the decay of the optimal controls to $0$ and the specific form of the dynamics. 
	 One could envision a fusion of our strategy with Loyasewicz-type inequalities to provide theoretical guarantees, but this remains an open problem.

	\subsection{Outlook} \label{sec: outlook}
	
	Let us conclude with a select list of additional open problems.
	
	\begin{itemize}
	\item \textbf{Necessity of assuming that $\overline{y}$ is a steady state.}
	The assumption that the running target $\overline{y}$ in \eqref{eq: J_T} is a steady state of the dynamics allows us to easily obtain quasi-turnpike controls allowing us to obtain the key estimates in \Cref{lem: quasiturnpike.1} and \Cref{lem: unif time estimates} (resp. \Cref{lem: quasiturnpike.wave.1}, \Cref{lem: quasiturnpike.2.wave}, \Cref{lem: quasiturnpike.heat.1} in the PDE setting).
	The case of controlled steady states $\overline{y}$ associated to a presecribed control $\overline{u}$ can readily be addressed by penalizing $u-\overline{u}$ over $[0, T]$ instead of solely $u$ as noted in \Cref{rem: running.target}. 
	But we were unable to see if this is a necessary assumption in the nonlinear context in the absence of smallness conditions on the target, and whether the controlled steady state case can be covered by solely penalizing $u$. 
	These questions merit in-depth investigation.
	
	\smallskip
	\item\textbf{Weakening \Cref{def: ctrl}.} 
	An important hypothesis we made throughout is \Cref{def: ctrl}, which required that, at least for data $y^0, y^1$ in the vicinity of the free steady state $\overline{y}$, the minimal $L^2$--norm control steering the system from $y^0$ to $\overline{y}$ may be estimated by $\left\|y^0-\overline{y}\right\|$, and similarly for that from $\overline{y}$ to $y^1$.
	This is a hallmark of linear control systems, which is also expected for nonlinear systems for which controllability results are obtained by linearization or perturbation methods and a fixed-point argument.
	But in the general context of control-affine systems, such an assumption may appear restrictive, even-though it is local.
	It is thus of interest to see how the results and methodology can be pertained whilst weakening \Cref{def: ctrl}. 
	
	\smallskip
	\item \textbf{Turnpike with state or control constraints.} 
	A problem which has not been extensively covered in the literature is the turnpike property with positivity (or box) constraints on either the state or the control. Slightly weaker integral turnpike results under such constraints have been obtained in \cite{mazari2020quantitative} by means of \emph{quantitative inequalities}.
	Such a study would complement the already existent nonlinear \emph{controllability under constraints theory} -- a topic covered in several recent works, see e.g. \cite{le2020local, mazari2020constrained, pighin2018controllability, ruiz2020control} and the references therein.
	\smallskip
	
	\item\textbf{More general control systems.} 
	We have considered homogeneous Dirichlet boundary conditions in \eqref{eq: nonlinear.wave.control} and \eqref{eq: nonlinear.heat.control} merely to avoid additional technical details.
    The proofs of \Cref{thm: turnpike.wave} (resp. \Cref{thm: turnpike.heat}) only require that the underlying dynamics are exactly controllable (resp. controllable to a steady state), thus, the same results hold with Neumann boundary conditions. 
        Similarly, variable coefficients and lower order terms may be considered, as long as these coefficients are time-independent, as we are using a Duhamel formula along with a semigroup representation of the solution, and this semigroup ought to be uniformly bounded for all times.
    
    In fact, we have chosen the wave and heat equation for the sake of presentation, but the respective results could possibly be extended to a more general scenario of exactly controllable semilinear systems with similar assumptions, e.g. dispersive equations (Schr\"odinger, Korteweg-de Vries), coupled systems, and so on.
    
    The (apparent) necessity of a Duhamel formula may however be an impediment to the extension of our results to the context of quasilinear systems such as the porous medium equation (see \cite{geshkovski2019null} and the references therein).
   Similarly, boundary control systems may pose technical difficulties, since they require for the introduction of \emph{admissible control operators} (a general functional framework for lifting the trace on the boundary -- see \cite[Chapter 4]{tucsnak2009observation}) to be written in a canonical first order form, and consequently, to admit a Duhamel formula representation for the solution. The particular issue for boundary control systems is that there is no guarantee that the inferred control operator would be bounded with respect to $T$, which is of paramount importance to our strategy. We leave these extensions open to future studies.

    \smallskip
    
    \item\textbf{Bilinear control systems.}
    It would also be of interest to establish the turnpike property for bilinear control systems. 
    This would be the somewhat true analog of the control-affine systems presented herein, and under suitable assumptions on the nonlinearity, one could expect that our methodology applies to such cases as well. 
    We have not addressed such systems for the simplicity of presentation and due to the controllability assumptions we make, as the controllability theory for bilinear problems is not complete (albeit, see \cite{beauchard2020unexpected, cannarsa2017multiplicative, duprez2019bilinear, mazari2020fragmentation} for recent developments).
	Notwithstanding, our results should be applicable to a system of the form (see \cite{beauchard2020unexpected})
	\begin{equation*}
	\begin{dcases}
	\del_t y - \del_x^2 y = u(t)f(y) &\text{ in }(0,T) \times (0, \pi) \\
	\del_x y(t, 0) = \del_x y(t, \pi) = 0 &\text{ in }(0,T) \\
	y|_{t=0} = y^0 &\text{ in }(0,\pi)
	\end{dcases}
	\end{equation*}
	where $u$ is a scalar control and $f$ is an appropriate nonlinearity (see \cite{beauchard2020unexpected} for sufficient conditions for ensuring controllability, and globally Lipschitz for applying our methodology).
	
	\smallskip
	\item \textbf{More general nonlinearities.} 
	Finally, it would be of interest to investigate problems where our methodology does not immediately apply, such as the paradigmatic example of the cubic heat equation.
	This problem consists in seeing whether one may prove \Cref{thm: turnpike.heat} (with the estimate on $u_T$ changed by an estimate of $u_T-\overline{u}$) for minimizers $u_T$ of 
    \begin{equation*}
    J_T(u) := \int_0^T \|y(t)-\overline{y}\|^2 \diff t + \int_0^T \|u-\overline{u}\|^2 \diff t
    \end{equation*}    
    where $y_T$ is the unique solution to
    \begin{equation} \label{eq: cubic.heat}
    \begin{dcases}
    \del_t y - \Delta y + y^3 = u\one_\omega &\text{ in }(0,T)\times\Omega\\
    y = 0 &\text{ on } (0,T)\times\del\Omega\\
    y|_{t=0} = y^0 &\text{ in }\Omega,
    \end{dcases}
    \end{equation}
    and $\overline{y}\in H^1_0(\Omega)$ is a controlled steady state associated to some $\overline{u}\in L^2(\omega)$
    (the case $\overline{u}\equiv0$ is somewhat trivial due to the inherent stabilization to $\overline{y}\equiv 0$).
	Let us elaborate on a possible technical impediment in the direct application of our strategy. Clearly, for \Cref{thm: turnpike.heat} to hold in this case, it would suffice to prove \Cref{lem: quasiturnpike.heat.1} for $f(s) = s^3$ (while replacing the estimate of $u_T$ by an estimate of $u_T-\overline{u}$). 
	To this end, first of all, for any $u\in L^2((0,T)\times\omega)$, using the variational formulation and standard arguments including the Cauchy-Schwarz, Young and Poincaré inequalities, one can find
	\begin{equation*}
	\frac{\diff}{\diff t} \int_\Omega |y(t,x)|^2 \diff x\leqslant\epsilon\int_\omega \|u(t,x)\|^2 \diff x
	\end{equation*}
	for a.e. $t\in[0, T]$, where $\epsilon>\frac{C(\Omega)}{4}$, whereas $y$ solves \eqref{eq: cubic.heat}, and thus 
	\begin{equation} \label{eq: cubic.heat.estimate.1}
	\|y\|_{C^0([0,T]; L^2(\Omega))} \leqslant C_1(\Omega)\left(\|u\|_{L^2((0,T)\times\omega)}  + \left\|y^0\right\|_{L^2(\Omega)}\right).
	\end{equation}
	Following the proof of \Cref{lem: C2} for $f(s) = s^3$ and using \eqref{eq: cubic.heat.estimate.1}, we may find 
	\begin{equation*}
	\left\|y(t) - \overline{y}\right\|_{L^2(\Omega)} \leqslant C\left(\left\|y^0-\overline{y}\right\|_{L^2(\Omega)} + \left\|u-\overline{u}\right\|_{L^2((0,T)\times\omega)} + \left\|y-\overline{y}\right\|_{L^2((0,T)\times\Omega)}\right),
	\end{equation*}
	where now 
	\begin{equation*}
	C \sim \exp\left(\|u\|_{L^2((0,T)\times\omega)}\right).
	\end{equation*}
	It is precisely at this point where the issue appears, since simply by using the form of the functional, we are not in a position to prove that $\|u\|_{L^2((0,T)\times\omega)}$ is uniformly bounded with respect to $T$, but rather only $\|u-\overline{u}\|_{L^2((0,T)\times\omega)}$. Should this be possible, then one can expect our methodology to apply to the cubic heat equation as well, but as things stand, turnpike without smallness conditions in this case remains open.
	
	Further examples worth analyzing include the heat equation with a convective nonlinearity $f(y, \nabla y)$, even in one space dimension (e.g. the Burgers equation); along these lines we refer to \cite{zamorano2018turnpike} for a local turnpike result for the 2d Navier-Stokes system.
	Similar questions can be asked for the semilinear wave equation, where the nonlinearity is sometimes only assumed to be superlinear (see \cite{kunisch2020optimal} for a subcritical optimal control study) -- our methodology a priori applies if the nonlinearity is either truncated by some cut-off, or if one manages to prove uniform estimates of $\|y_T\|_{L^\infty((0,T)\times\Omega)}$ with respect to $T$. 
	Further nonlinear problems which could be investigated include hyperbolic systems (see \cite{gugat2019turnpike} for a related study) or free boundary problems (see \cite{geshkovski2019controllability} for a control perspective).
	\end{itemize}
	
	\subsubsection*{Acknowledgments} 
	
	B.G. thanks Idriss Mazari (U. Paris Dauphine) for helpful comments.
	The authors thank the anonymous reviewers for deeply insightful suggestions and remarks which have greatly improved the quality of this manuscript.
	\smallskip
	
	\noindent{\small{\textbf{Funding:}}
		\small{B.G. and E.Z. have received funding from the European Union's Horizon 2020 research and innovation programme under the Marie Sklodowska-Curie grant agreement No.765579-ConFlex.
		D.P., C.E. and E.Z. have received funding from the European Research Council (ERC) under the European Union’s Horizon 2020 research and innovation programme (grant agreement NO. 694126-DyCon).
The work of E. Z. has been supported by the Alexander von Humboldt-Professorship program, the Transregio 154 Project ‘‘Mathematical Modelling, Simulation and Optimization Using the Example of Gas Networks’’ of the German DFG, grant MTM2017-92996-C2-1-R COSNET of MINECO (Spain) and by the Air Force Office of Scientific Research (AFOSR) under Award NO. FA9550-18-1-0242.}}

	\bibliographystyle{acm}
	\bibliography{refs}{}

\end{document}